\DeclareMathAlphabet{\mathpzc}{OT1}{pzc}{m}{it}
\newcommand{\oldmarginpar}[1]{}
\newcommand{\id}{{\mathrm{id}}}
\newcommand{\vbl}{-}
\newcommand{\ang}[1]{{\langle{#1}\rangle}}
\newcommand{\tn}{\otimes}           
\newcommand{\mathbold}{\bf}
\newcommand{\red}[1]{{#1}_{\mathrm{red}}}
\newcommand{\pr}{{\mathrm{pr}}}
\newcommand{\subsec}[1]{{\begin{trivlist}\item\em\large#1\end{trivlist}}}
\newcommand{\longlabelmap}[1]{{\,\buildrel #1\over\longrightarrow\,}}
\newcommand{\longlabelmaps}[2]{{\rightrightarrows}}
\newcommand{\longmap}{{\,\longrightarrow\,}}
\newcommand{\an}{\mathrm{an}}
\DeclareMathOperator{\End}{End}
\DeclareMathOperator{\Hom}{Hom}
\DeclareMathOperator{\Aut}{Aut}
\DeclareMathOperator{\sHom}{\underline{Hom}}
\DeclareMathOperator{\sAut}{\underline{Aut}}
\DeclareMathOperator{\tr}{tr}
\DeclareMathOperator{\GL}{GL}
\DeclareMathOperator{\SL}{SL}
\DeclareMathOperator{\Spec}{Spec}
\newcommand{\Gal}{{\mathrm{Gal}}}
\newcommand{\sO}{{\mathcal{O}}}
\newcommand{\gr}{{\mathrm{gr}}}
\newcommand{\bF}{{\mathbold F}}
\newcommand{\bA}{{\mathbold A}}
\newcommand{\bP}{{\mathbold P}}
\newcommand{\bQ}{{\mathbold Q}}
\newcommand{\bZ}{{\mathbold Z}}
\newcommand{\bC}{{\mathbold C}}
\newcommand{\bN}{{\mathbold N}}
\newcommand{\Gm}{{\mathbold G}_{\mathrm{m}}}
\newcommand{\Ga}{{\mathbold G}_{\mathrm{a}}}
\newcommand{\et}{{\text{{\rm \'et}}}}
\newcommand{\m}{{\mathfrak{m}}}
\newcommand{\Fr}{{\mathrm{Fr}}}
\renewcommand{\geq}{\geqslant}
\DeclareMathOperator{\colim}{\mathrm{colim}}
\newcommand{\smcoprod}{{\,\scriptstyle\amalg\,}}
\newcommand{\smsmcoprod}{{\,\scriptscriptstyle\amalg\,}}
\newcommand{\ab}{\mathrm{ab}}
\newcommand{\rightlabelxyarrows}[2]{{\ar@<0.7ex>^-{#1}[r]\ar@<-0.7ex>_-{#2}[r]}}
\newcommand{\LRlabelxyarrows}[2]{{\ar@<0.7ex>^-{#1}@{<-}[r]\ar@<-0.7ex>_-{#2}[r]}}
\newcommand{\displaylabelfork}[6]{{	\entrymodifiers={+!!<0pt,\fontdimen22\textfont2>}
	\def\objectstyle{\displaystyle}
\xymatrix{{#1} \ar^-{#2}[r] & {#3} \ar@<0.7ex>^-{#4}[r]\ar@<-0.7ex>_-{#5}[r] & {#6}}}}
\newcommand{\displaylabelcofork}[6]{{	\entrymodifiers={+!!<0pt,\fontdimen22\textfont2>}
	\def\objectstyle{\displaystyle}
\xymatrix{{#1} \ar@<0.7ex>^-{#2}[r]\ar@<-0.7ex>_-{#3}[r] & {#4} \ar^-{#5}[r] & {#6}}}}
\newcommand{\displaylabelrightarrows}[4]{{\entrymodifiers={+!!<0pt,\fontdimen22\textfont2>}
	\def\objectstyle{\displaystyle}
\xymatrix{{#1} \ar@<0.7ex>^-{#2}[r]\ar@<-0.7ex>_-{#3}[r] & {#4}}}}
\newcommand{\marpar}[1]{}
\newcommand{\marpartd}[1]{}
\newcommand{\hide}[1]{}
\newcommand{\comment}[1]{}
\newcommand{\tempcomment}[1]{}
\newcommand{\coeff}{s}
\newcommand{\setof}[2]{\{#1 \;|\; #2\}}
\newcommand{\bcp}{\odot}
\newcommand{\ptst}{E}
\newcommand{\gh}[1]{\gamma_{#1}}
\newcommand{\aff}{\mathsf{Aff}}
\newcommand{\Space}{\mathsf{Sp}}
\newcommand{\Set}{\mathsf{Set}}
\newcommand{\CommMon}{\mathsf{CommMon}}
\newcommand{\nset}{{\bN^{(\ptst)}}}
\newcommand{\qbar}{\bar{\bQ}}
\newcommand{\Het}{H_{\mathrm{\et}}}
\newcommand{\Hetc}{H_{\mathrm{\et,c}}}
\newcommand{\Hc}{H_{\mathrm{c}}}
\newcommand{\Frob}{{\mathrm{Frob}}}
\newcommand{\fp}{\mathfrak{p}}
\newcommand{\wnus}[1]{{W_{#1}^{*}}}
\newcommand{\wnls}[1]{W_{{#1*}}}
\newcommand{\wius}{{\wnus{\infty}}}
\newcommand{\wils}{{\wnls{\infty}}}
\newcommand{\wus}{{\wnus{}}}
\newcommand{\wls}{{\wnls{}}}
\newcommand{\ltm}{{v}}
\newcommand{\fftm}{{f}}
\newcommand{\ffaus}{{f^*}}
\newcommand{\ffalsh}{{f_!}}
\newcommand{\stmap}{{s}}
\newcommand{\shfcond}{T}
\newcommand{\Ring}{\mathsf{Ring}}
\newcommand{\inv}[1]{{{#1}^{\mathrm{inv}}}}
\newcommand{\ncomp}{N^{\mathrm{c}}}
\newcommand{\zcomp}{Z^{\mathrm{c}}}
\newcommand{\Lambtil}{\tilde{\Lambda}}
\newtheoremstyle{mythm}{}{}%
  {\itshape}
  {}
  {\bfseries}
  {}
  { }
  {\thmnumber{#2.\hspace{1.5mm}}\thmname{#1}\thmnote{ #3}.}
\newtheoremstyle{intro}{}{}%
  {\itshape}
  {}
  {\bfseries}
  {}
  { }
  {\thmname{#1}\thmnumber{ #2}\thmnote{ #3}.}
\newtheoremstyle{myrmk}{}{}%
  {}
  {}
  {}
  {}
  { }
  {{\bfseries\thmnumber{#2.\hspace{1.5mm}}}{\itshape\thmname{#1}}\thmnote{ #3}.}
\numberwithin{equation}{subsection}
\theoremstyle{mythm}
\newtheorem{thm}[subsection]{Theorem}
\newtheorem{theorem}[subsection]{Theorem}
\newtheorem{proposition}[subsection]{Proposition}
\newtheorem{lemma}[subsection]{Lemma}
\newtheorem{corollary}[subsection]{Corollary}
\theoremstyle{myrmk}
\theoremstyle{intro}
\theoremstyle{plain}
\newtheorem*{prop*}{Proposition}
\newtheorem*{cor*}{Corollary}
\newtheorem*{conj*}{Conjecture}
\theoremstyle{definition}
\def\@seccntformat#1{\@ifundefined{#1@cntformat}%
{\csname the#1\endcsname\quad}
{\csname #1@cntformat\endcsname}
}
\def\section@cntformat{\thesection.\enspace}
\def\subsection@cntformat{\thesubsection.}
\newcommand\mnote[1]{}
\newcounter{hour}\newcounter{minute}
\newcommand{\printtime}{\setcounter{hour}{\time/60}%
        \setcounter{minute}{\time-\value{hour}*60}%
        \ifthenelse{\value{hour}<10}{0}{}\thehour:%
        \ifthenelse{\value{minute}<10}{0}{}\theminute}
\begin{document}

\title[$\Lambda$-rings and the field with one element]
	{$\Lambda$-rings and the field with one element}
\author[J.~Borger]{James Borger}
\address{Australian National University, Canberra}

\begin{abstract}
	The theory of $\Lambda$-rings, in the sense of Grothendieck's Riemann--Roch theory,
	is an enrichment of the theory of commutative
	rings.  In the same way, we can enrich usual algebraic
	geometry over the ring $\bZ$ of integers to produce
	$\Lambda$-algebraic geometry.  We show that $\Lambda$-algebraic geometry is
	in a precise sense an algebraic geometry over a
	deeper base than $\bZ$ and that it has many properties predicted for algebraic
	geometry over the mythical field with one element.  Moreover, it does this
	is a way that is both formally robust and closely related to active areas in
	arithmetic algebraic geometry.
\end{abstract}
\date{\today. \printtime.}
\thanks{This is a preliminary version, made available in advance of the Frobenius
Lifts workshop at the Lorentz Center, Leiden, 2009 October 5--9. I hope to include
the toric uniformization theorem in the final version.}
\thanks{This work was partly supported by Discovery Project DP0773301, 
a grant from the Australian Research Council.}
\email{borger@maths.anu.edu.au}

\maketitle

\section*{Introduction}

Many writers have mused about algebraic geometry over deeper bases than the ring $\bZ$ of
integers. Although there are several, possibly unrelated reasons for this, here I will mention
just two. The first is that the combinatorial nature of enumeration formulas in linear algebra
over finite fields $\bF_q$ as $q$ tends to $1$ suggests that, just as one can work over all
finite fields simultaneously by using algebraic geometry over $\bZ$, perhaps one could bring in
the combinatorics of finite sets by working over an even deeper base, one which somehow allows
$q=1$. It is common, following Tits~\cite{Tits:F1}, to call this mythical base $\bF_1$, the
field with one element. (See also Steinberg~\cite{Steinberg:F1}, p.\ 279.) The second purpose is
to prove the Riemann hypothesis. With the analogy between integers and polynomials in mind, we
might hope that $\Spec \bZ$ would be a kind of curve over $\Spec \bF_1$, that
$\Spec\bZ\tn_{\bF_1}\bZ$ would not only make sense but be a surface bearing some kind of
intersection theory, and that we could then mimic over $\bZ$ Weil's
proof~\cite{Weil:Riemann-hyp-function-field} of the Riemann hypothesis over function
fields.\footnote{The origins of this idea are unknown to me. Manin~\cite{Manin:Lectures-on-zeta}
mentions it explicitly. According to Smirnov~\cite{Smirnov:letter-to-Manin}, the idea occurred
to him in 1985 and he mentioned it explicitly in a talk in Shafarevich's seminar in 1990. It may
well be that a number of people have had the idea independently since the appearance of Weil's
proof.} Of course, since $\bZ$ is the initial object in the category of rings, any theory of
algebraic geometry over a deeper base would have to leave the usual world of rings and
schemes.

The most obvious way of doing this is to consider weaker algebraic structures than rings
(commutative, as always), such as commutative monoids, and to try using them as the affine building
blocks for a more rigid theory of algebraic geometry. This has been pursued in a number of papers,
which I will cite below. Another natural approach is motived by the following question, first
articulated by Soul\'e~\cite{Soule:F1}: Which rings over $\bZ$ can be defined over $\bF_1$? Less
set-theoretically, on a ring over $\bZ$, what should descent data to $\bF_1$ be?

The main goal of this paper is to show that a reasonable answer to this question is a
\emph{$\Lambda$-ring structure}, in the sense of Grothendieck's Riemann--Roch
theory~\cite{Grothendieck:Chern}. More precisely, we show that a $\Lambda$-ring structure on a
ring can be thought of as descent data to a deeper base in the precise sense that it gives rise
to a map from the big \'etale topos of $\Spec\bZ$ to a $\Lambda$-equivariant version of the big
\'etale topos of $\Spec\bZ$, and that this deeper base has many properties expected of the field
with one element. Not only does the resulting algebraic geometry fit into the supple formalism
of topos theory, it is also arithmetically rich---unlike the category of sets, say, which is the
deepest topos of all. For instance, it is closely related to global class field theory, 
complex multiplication, and crystalline cohomology.

So let us define an $\bF_1$-algebra to be a $\Lambda$-ring. (The language of $\bF_1$ will quickly
feel silly, but for most of this paper, it will be useful as an expository device.) More
generally, define an $\bF_1$-scheme to be a scheme equipped with a $\Lambda$-structure. The
theory of $\Lambda$-structures on schemes was introduced in
Borger~\cite{Borger:BGWV}\cite{Borger:SLAG}. (Although see
Grothendieck~\cite{Grothendieck:Pursuing-stacks}, p.\ 506.) Defining a $\Lambda$-structure on a
general scheme $X$ takes some time, as it does on a general ring. But when $X$ is flat over
$\bZ$, there is a simple equivalent definition: it is a commuting family of endomorphisms
$\psi_p\:X\to X$, indexed by the set of prime numbers $p$, such that each $\psi_p$ agrees with
the $p$-th power Frobenius map on the special fiber $X\times_{\Spec\bZ}\Spec\bF_p$. It is also
true that any reduced $\Lambda$-scheme is flat over $\bZ$. Keeping these two facts in mind, it
is possible to read most of this paper without knowing the definition in general.

If we take this $\bF_1$-to-$\Lambda$ dictionary seriously, then the functor that removes the
$\Lambda$-structure from a $\Lambda$-scheme should be thought of as removing the descent data,
and hence as the base-change functor from $\bF_1$ to $\bZ$.
So for instance, $\bF_1$ itself should be defined to be $\bZ$ with its unique
$\Lambda$-structure, where each $\psi_p$ is the identity map. It is the initial
object in the category of $\Lambda$-rings.

As indicated above, the definition of a $\Lambda$-structure extends not just to the category of
schemes but to the entire ambient topos. By this, I mean the big \'etale topos over $\bZ$, by
definition the category of sheaves of sets on the category of affine schemes equipped with the
\'etale topology. It is therefore natural to define the big \'etale topos over $\bF_1$ to be the
category of such sheaves with $\Lambda$-structure. The base-change functor $\ltm^*$ that strips
off the $\Lambda$-structure then induces a well-defined map $$ \ltm\:\Spec\bZ\longmap\Spec\bF_1,
$$ not on the level of schemes, which would of course be meaningless, but on the level of big
\'etale toposes.

The functor $\ltm^*$ has not only a right adjoint $\ltm_*$, as required, but also a left adjoint
$\ltm_!$. If we think of $\ltm^*$ as the base-change functor, we should think of $\ltm_*$ as the
Weil restriction of scalars functor and $\ltm_!$ as the base-forgetting functor. In terms of
definitions rather than interpretations, $\ltm_*$ sends a space to its arithmetic jet space, which
is a multi-prime, algebraic version of Buium's $p$-jet space~\cite{Buium:Arithmetic-diff-equ},
which is in turn a formal $p$-adic lift of the Greenberg
transform~\cite{Greenberg:I}\cite{Greenberg:II}. On the other hand, $\ltm_!$ sends a space to its
space of big Witt vectors. Both of these are given their natural $\Lambda$-structures. These
constructions are exotic by the standards of algebraic geometry over fields, but even in the
arithmetically local case, where we consider $p$-adic schemes with only one Frobenius lift
$\psi_p$, they have proven very useful. See, for example, Buium~\cite{Buium:Arithmetic-diff-equ}
for applications of the first and Illusie~\cite{Illusie:dRW-1016} for the second. (In fact, Buium
has had similar ideas regarding Frobenius lifts and the field with one element.
See~\cite{Buium:geometry-of-fermat-adeles}, the preface of~\cite{Buium:Arithmetic-diff-equ}, or
Buium--Simanca~\cite{Buium-Simanca:Arithmetic-Laplacians}. Manin~\cite{Manin:Cyclotomy} has also
recently interpreted Witt vectors as being related to the field with one element.)

For applications to usual, non-$\Lambda$ arithmetic algebraic geometry, the most interesting
spaces over $\bF_1$ are those obtained from schemes over $\bZ$. Whether we produce them by
applying $\ltm_!$ or $\ltm_*$, the result is almost never a scheme of finite type over $\bF_1$.
So from the perspective of algebraic geometry over $\bZ$, the spaces of principal interest over
$\bF_1$ are not those of finite type. In fact, as has been expected, there appear to be very few
schemes of finite type over $\bZ$ that descend to $\bF_1$ at all. But their
infrequency is so extreme that they become interesting on their own terms. For example,
it might be possible to describe the category of algebraic spaces of finite type over $\bF_1$ in
purely combinatorial terms, without any mention of algebraic geometry or $\Lambda$-structures.

This question is probably within reach, and the second purpose of this paper is to take some
steps in that direction. For example, we prove the following theorem, stated here under
restricted hypotheses:

\begin{thm}\label{thm:intro-A}
	Let $X$ be a smooth proper scheme over $\bZ[1/M]$, for some integer $M\geq 1$.
	If $X$ descends to $\bF_1$ (i.e., admits a $\Lambda$-structure), 
	then is has the following properties:
	\begin{enumerate}
		\item The action of $\Gal(\bar\bQ/\bQ)$ on 
			any $p$-adic \'etale cohomology group 
			$\Het^n(X_{\bar\bQ},\bQ_p)$ factors through its abelianization.
		\item There is an integer $N$, all of whose prime divisors divide $Mp$,
			such that the restriction of the representation
			$\Het^n(X_{\bar\bQ},\bQ_p)$ to $\Gal(\bar\bQ/\bQ(\zeta_N))$
			is isomorphic to a sum of powers of the cyclotomic character.
		\item The $(i,j)$ Hodge number of the mixed Hodge
			structure on the singular cohomology $H^n(X^{\an},\bC)$	is zero when $i\neq j$.
	\end{enumerate}
\end{thm}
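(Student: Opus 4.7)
The plan is to translate the commuting Frobenius lifts $\psi_\ell\: X \to X$ of the $\Lambda$-structure into constraints on the Galois action on $\Het^n$, and then to transfer these constraints to the complex Hodge structure via $p$-adic Hodge theory.

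For (a), I would fix a prime $p$, set $V = \Het^n(X_{\bar\bQ}, \bQ_p)$, and use smooth proper base change: for each $\ell \nmid Mp$, $V$ is identified with $\Het^n(X_{\bar\bF_\ell}, \bQ_p)$ in a way that intertwines the Galois action of $\Frob_\ell$ with the action of the absolute Frobenius on $X_{\bF_\ell}$, which by hypothesis is the reduction of $\psi_\ell$. So $\Frob_\ell$ and $\psi_\ell$ induce the same operator on $V$. Since the $\psi_\ell$ commute as endomorphisms of $X$, the $\Frob_\ell$ commute on $V$, and Chebotarev density then forces the image of $\Gal(\bar\bQ/\bQ)$ in $\GL(V)$ to be abelian.

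For (b), the image being abelian, $V$ decomposes (after passing to the semisimplification and to $\bar\bQ_p$-coefficients) as a sum of continuous characters $\chi_j\: \Gal(\bar\bQ/\bQ) \to \bar\bQ_p^\times$, each unramified outside $Mp$ by smooth proper base change. The $p$-adic de Rham comparison theorem for smooth proper varieties over $p$-adic fields gives that $V$, and hence each $\chi_j$, is Hodge-Tate with integer weights. Serre's theorem on locally algebraic abelian $\ell$-adic representations of $\Gal(\bar\bQ/\bQ)$ then identifies each $\chi_j$ with an algebraic Hecke character, which over $\bQ$ is necessarily of the form $\chi_j^{(0)} \cdot \chi_{\mathrm{cyc}}^{a_j}$ with $\chi_j^{(0)}$ of finite order and $a_j \in \bZ$. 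The Weil bound $|\chi_j(\Frob_\ell)|_\infty = \ell^{n/2}$ forces $a_j = n/2$ (which requires $n$ even unless $V = 0$). Taking $N$ so that every $\chi_j^{(0)}$ factors through $(\bZ/N)^\times$—possible since each has conductor dividing a power of $Mp$—kills the finite-order parts on restriction to $\Gal(\bar\bQ/\bQ(\zeta_N))$, leaving a sum of copies of $\chi_{\mathrm{cyc}}^{n/2}$.

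For (c), I would invoke the $p$-adic de Rham comparison isomorphism $V \tn_{\bQ_p} B_{\mathrm{dR}} \cong H^n_{\mathrm{dR}}(X_\bQ) \tn_\bQ B_{\mathrm{dR}}$ of filtered Galois modules. By (b) the only Hodge-Tate weight of $V$ is $n/2$, so the Hodge filtration on $H^n_{\mathrm{dR}}(X_\bC) \cong H^n(X^{\an}, \bC)$ has a single jump at $n/2$; the identification $\gr^p F^\bullet H^n_{\mathrm{dR}} \cong H^{p, n-p}$ then yields $H^{p, n-p} = 0$ for $p \ne n/2$. The main obstacle I expect is in (b), namely applying Serre's classification of locally algebraic abelian $\ell$-adic representations—the Hodge-Tate hypothesis is delivered by $p$-adic Hodge theory, but one must also handle the semisimplicity issue implicit in the statement and then use Weil purity to pin down the cyclotomic exponent as $n/2$.
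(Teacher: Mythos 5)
Your outline is correct, and it reaches the conclusion by a genuinely different route for (b) and (c) than the paper does. Part (a) matches the paper's argument in Theorem~\ref{thm:lambda-varieties-are-abelian} (the paper carries it out at torsion coefficients first and then passes to the limit; you work directly with $\bQ_p$, which is harmless for the statement at hand). For (b) and (c), however, you invoke Serre's classification of locally algebraic abelian $\ell$-adic representations together with Deligne's Weil bounds to pin down that every cyclotomic exponent equals $n/2$, and then read off (c) from the de Rham comparison. The paper instead proves the more general Theorem~\ref{thm:abstract-abelian-Galois}, which never uses the Weil conjectures or Hecke characters: it starts from the Kronecker--Weber theorem to place the image of $G_\bQ$ inside $(\bZ/n\bZ)^*\times\bZ_p^*$, splits off the torsion part, filters the remaining representation by a $g$-stable flag, and analyses each one-dimensional graded piece through its weakly admissible module (Lemma~\ref{lem:try-2}), concluding by a Sen-operator argument (Lemma~\ref{lem:lemma2}) to split the iterated extension. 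Your version buys a shorter argument and a sharper form of the output ($a_j=n/2$ for every $j$, hence a single Tate twist per cohomological degree), at the cost of invoking Deligne's theorem; the paper's version is self-contained within class field theory and $p$-adic Hodge theory and works for non-pure (merely potentially semistable) abelian representations, which matters because the machinery is also applied to non-smooth, non-proper $\Lambda$-schemes in the rest of Section~\ref{sec:p-adic-cohomology}.

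The one genuine gap is the semisimplicity issue you flag at the end: Serre's theorem and the purity argument only describe the semisimplification of $V|_{\Gal(\bar\bQ/\bQ(\zeta_N))}$, which becomes a direct sum of copies of a single power of $\chi_{\mathrm{cyc}}$. To finish you need to rule out nontrivial self-extensions. The paper handles exactly this in Lemma~\ref{lem:lemma2}: a Hodge--Tate iterated extension of cyclotomic characters of $\Gal(K(\zeta_{p^\infty})/K(\zeta_{p^b}))$ is split, because after a twist the extension class is a continuous homomorphism $G\to E$ whose source is $\bC_p$-admissible of weight $0$, hence (by Sen's theorem) has finite image, hence is trivial. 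You should either import that lemma or note that in your pure case the extension is of $\chi_{\mathrm{cyc}}^{n/2}$ by $\chi_{\mathrm{cyc}}^{n/2}$, and $\bC_p$-admissibility plus Sen's theorem kills the extension class just as in the paper.
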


Another way to express this theorem is that only abelian Artin--Tate motives can be defined over
$\bF_1$. (Compare Soul\'e~\cite{Soule:F1}, 6.4, question 4.) While this means there are no
motivically interesting $\bF_1$-schemes of finite type, the argument that shows this is rather
interesting. In the zero-dimensional case, it is an elementary consequence of the Kronecker--Weber
theorem and Chebotarev's density theorem. (See Borger--de
Smit~\cite{Borger-deSmit:integral-lambda-models}.) In the higher-dimensional case, the argument
also uses the Lefschetz theorem and the proper base change theorem for \'etale cohomology, on the
one hand, and on the other, $p$-adic Hodge theory, including the potentially semi-stable theorem
for arbitrary varieties proved by Kisin~\cite{Kisin:pst}, following the work of many others.

The theorem above attests to the predicted combinatorial nature of schemes of finite type over
$\bF_1$. I emphasize once again that the combinatorial nature is not built into the foundations
of the theory---it is a consequence of hard arithmetic results in the presence of finiteness
conditions. Indeed, I expect that the cohomological theory of infinite-dimensional spaces over
$\bF_1$ contains, via $\ltm_!$ and de~Rham--Witt theory, the full theory of motives, and even in
a visible way. It is hard to imagine a theory defined in combinatorial terms having this
property.

As strong as the cohomological restrictions above are, they are far from being sharp. In a
future version of this paper, I hope to show that all examples of finite type come from toric
varieties, in a certain precise sense. Here is a partial result in this direction whose proof
does appear in the present version. It is probably even a necessary step in the proof of the
full classification theorem.

\begin{thm}\label{thm:intro-B}
	Let $X$ be a $\Lambda$-scheme which is of finite type over $\bZ$. 
	Then $X$ has a point with coordinates in a cyclotomic field. 
	When $X$ is proper, there is even a $\Lambda$-morphism $\Spec\bZ\to X$.
\end{thm}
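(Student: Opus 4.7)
The strategy is to produce a minimal nonempty closed $\Lambda$-subscheme $Z\subseteq X$, show that it is quasi-finite over $\bZ$ (and finite when $X$ is proper), and then invoke the zero-dimensional classification of Borger--de Smit. Because $X$ is Noetherian, such a minimal $Z$ exists. Minimality forces $Z$ to be reduced: any scheme endomorphism preserves the nilradical, so $Z_{\mathrm{red}}$ is $\Lambda$-stable, and would be strictly smaller than $Z$ if it were distinct. The flatness fact recalled in the introduction (every reduced $\Lambda$-scheme is flat over $\bZ$) then gives that $Z$ is flat over $\bZ$; in particular every fiber $Z_{\bF_p}$ is nonempty.

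Fix any prime $p$ and, for each $n\geq 1$, consider the equalizer $E_n:=\mathrm{Eq}(\psi_p^n,\id_Z)\subseteq Z$, a closed subscheme. Because every $\psi_\ell$ commutes with $\psi_p^n$, each $E_n$ is preserved by all the $\psi_\ell$ and so is itself a closed $\Lambda$-subscheme of $Z$. On the fiber $Z_{\bF_p}$ the operator $\psi_p^n$ reduces to $\Fr^n$. Pick any closed point of $Z_{\bF_p}$ and let $m$ be the degree of its residue field over $\bF_p$; with $n=m$ this point lies in $E_n\cap Z_{\bF_p}$, so $E_n$ is nonempty. Minimality forces $E_n=Z$, i.e.\ $\psi_p^n=\id_Z$, and reducing mod $p$ gives $\Fr^n=\id$ on $Z_{\bF_p}$. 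Every closed point of $Z_{\bF_p}$ is therefore $\bF_{p^n}$-rational, so $Z_{\bF_p}$ is zero-dimensional; by flatness the whole of $Z$ has relative dimension zero over $\bZ$, making $Z\to\Spec\bZ$ quasi-finite. If $X$ is proper then so is $Z$, and quasi-finite-plus-proper yields $Z$ finite over $\bZ$.

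With $Z$ quasi-finite (respectively, finite) and $\Lambda$, the zero-dimensional classification of Borger--de Smit identifies $Z$ with a disjoint union of pieces of $\Spec\bZ[\zeta_N]$ for suitable $N$. Any closed point of $Z$ then produces the required point of $X$ with coordinates in a cyclotomic field. In the proper case, the finiteness of $Z$ together with the explicit cyclotomic description of its $\Lambda$-structure delivers the desired $\Lambda$-morphism $\Spec\bZ\to X$, the last step amounting to locating a $\Spec\bZ$ among the cyclotomic factors. In this proof the equalizer-descent step is elementary and routine; the genuine difficulty has been absorbed into the zero-dimensional theorem of Borger--de Smit, which in turn rests on Kronecker--Weber and Chebotarev. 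The main obstacle, then, is really the zero-dimensional case; once that is granted, the argument reduces the general finite-type situation to it in one stroke.
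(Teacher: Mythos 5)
Your strategy is genuinely different from the paper's, and the central idea is a real simplification if it holds up. The paper's proof of the underlying result (Theorem~\ref{thm:anothertry}) constructs a decreasing chain $X_0\supseteq X_1\supseteq\cdots$ where $X_{n+1}=\mathrm{Eq}(\psi_{p_{n+1}},\id)$, and shows each step is nonempty by producing an $\bF_{p_{n+1}}$-valued point of $X_n$ via Proposition~\ref{pro:weight-constraints} --- which in turn rests on the entire cohomological apparatus of sections 5--6 (constructibility, Chebotarev, Kisin's potential semi-stability, Deligne's weight estimates on the leading Hodge number). You instead take a minimal nonempty closed $\Lambda$-subscheme $Z$ (Noetherian DCC) and exploit minimality to get periodicity \emph{a posteriori}: the equalizer $E_m=\mathrm{Eq}(\psi_p^m,\id_Z)$ only needs \emph{some} closed point of $Z_{\bF_p}$ to be nonempty, not an $\bF_p$-rational one, because $m$ is free to match the residue degree. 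That flexibility sidesteps the need for Proposition~\ref{pro:weight-constraints} entirely. You should be aware that the paper explicitly claims the proof ``uses all the deep arithmetic results above''; your route, if correct, would show that claim is an overstatement, so it is worth triple-checking. I do not see an error in the basic minimality/equalizer mechanism.

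That said, the execution has real gaps. First, the assertion that flatness makes \emph{every} fiber $Z_{\bF_p}$ nonempty is false: $\Spec\bZ[1/p]$ is flat and of finite type with empty fiber at $p$. What is true is that a nonempty flat finite-type $\bZ$-scheme has open, hence cofinite, image in $\Spec\bZ$, so all but finitely many fibers are nonempty; you need to phrase it that way and run the argument for those primes. Second, establishing periodicity at \emph{one} prime does not yield quasi-finiteness of $Z$: relative dimension of a flat finite-type morphism is only locally constant, so one zero-dimensional fiber controls only the connected component(s) it meets, and $Z$ may well be disconnected (the $\psi_\ell$ can permute components). The repair is to run the equalizer argument at every prime with nonempty fiber --- giving infinitely many periodic primes --- and then invoke Proposition~\ref{pro:finiteness-of-periodic-spaces} directly, rather than arguing dimension from one fiber. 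Third, Borger--de Smit as cited is for $\Lambda$-rings that are \emph{finite} over $\bZ$; your minimal $Z$ is merely quasi-finite in the non-proper case (think $\bZ[\zeta_n,1/n]$), so the quoted theorem does not apply off the shelf. The paper deals with this by further quotienting to an \'etale $\Lambda$-ring and then arguing the associated number fields are abelian (Chebotarev via the commuting Frobenius lifts) and hence cyclotomic (Kronecker--Weber); you would need a similar step, or a quasi-finite refinement of Borger--de Smit. Finally, ``locating a $\Spec\bZ$ among the cyclotomic factors'' in the proper case is slightly off: what you actually need is that the finite $Z$ is \'etale over $\bZ$ (which does follow from periodicity of $\psi_p$ at all $p$, since bijective Frobenius forces reducedness, hence each $Z_{\bF_p}$ is a product of finite fields), and then Minkowski's theorem gives that each component is $\Spec\bZ$ with the trivial $\Lambda$-structure. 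That is the argument in Corollary~\ref{cor:F1-points-exist}, and your proof should route through it rather than through the phrase ``cyclotomic factors.''
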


In other words, every $\bF_1$-scheme of finite type has a cyclotomic point and, if proper, has
an $\bF_1$-point. I believe that this result had not been predicted. Observe that in
the affine case, the theorem is a new result about $\Lambda$-rings, which
can be stated independently of the theory. It says that every
$\Lambda$-ring which is finitely generated as a ring admits a ring map to some cyclotomic field.
Even in this case, the proof uses all the deep arithmetic results above.

If we apply theorem~\ref{thm:intro-A} and the Lefschetz fixed-point formula at various primes
$p$, we obtain the following result, which was essentially predicted (implicitly
by the followers of Tits~\cite{Tits:F1}, and by definition in 
Kurokawa~\cite{Kurokawa:zeta-functions-over-F1}): 

\begin{corollary}\label{cor:intro-A1}
	Let $X$ be a smooth proper scheme over $\bZ[1/M]$, for some integer $M\geq 1$.
	If $X$ descends to $\bF_1$, then 
	there is an integer $N\geq 1$ divisible only by the primes
	dividing $M$ such that for all prime powers $q>1$ with $q\equiv 1\bmod N$,
	the number of $\bF_q$-valued points of $X$ is $P(q)$, where $P$ is the
	Hodge polynomial of $X$: $P(t)=\sum_{i,n}(-1)^n r_{i,n}t^i$, where
	$r_{i,n}$ is the $(i,i)$ Hodge number of $\Hc^n(X^{\an},\bC)$.
	
	In particular, if $X$ is smooth and proper over $\bZ$, the number of such
	points is given by the polynomial $P(q)$ for all prime powers $q>1$.
\end{corollary}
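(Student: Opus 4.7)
The plan is to apply the Grothendieck--Lefschetz trace formula for Frobenius on $\ell$-adic \'etale cohomology and then read off each trace using parts~(b) and~(c) of Theorem~\ref{thm:intro-A}. Fix a prime power $q=p^r$ with $p\nmid M$. I would choose a prime $\ell\neq p$ for which the integers $N_n$ produced by Theorem~\ref{thm:intro-A}(b) (with this $\ell$ playing the role of the ``$p$'' there) have prime divisors only among those of $M$: when $M>1$ this is arranged by taking any prime divisor of $M$ as $\ell$, while the case $M=1$ is addressed at the end. Let $N$ be the least common multiple of the finitely many nontrivial $N_n$ for $0\le n\le 2\dim X$; then $N$ has prime divisors only among those of $M$. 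Good reduction at $p$ together with the smooth and proper base change theorem provide a $\Frob_q$-equivariant isomorphism between $\Het^n(X_{\bar\bF_p},\bQ_\ell)$ and $\Het^n(X_{\bar\bQ},\bQ_\ell)$, so that
\[
\#X(\bF_q) \;=\; \sum_n (-1)^n \tr\bigl(\Frob_q \mid \Het^n(X_{\bar\bQ},\bQ_\ell)\bigr).
\]

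Write $V_n=\Het^n(X_{\bar\bQ},\bQ_\ell)$. By~(b), the restriction of $V_n$ to $\Gal(\bar\bQ/\bQ(\zeta_N))$ splits as a direct sum $\chi_\ell^{k_1}\oplus\cdots\oplus\chi_\ell^{k_d}$ of integer powers of the $\ell$-adic cyclotomic character $\chi_\ell$. I would then invoke Deligne's weight theorem on the good reduction $X_{\bF_{p'}}$ for primes $p'\equiv 1\bmod N$ disjoint from $M\ell$ (abundant by Dirichlet): the eigenvalues of $\Frob_{p'}$ on $V_n$ have complex absolute value $(p')^{n/2}$, and the eigenvalue $\chi_\ell^{k_j}(\Frob_{p'})=(p')^{k_j}$ matches this only when $k_j=n/2$. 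In particular $V_n=0$ for odd $n$, and for $n=2i$ each $\sigma\in\Gal(\bar\bQ/\bQ(\zeta_N))$ acts on $V_{2i}$ by the scalar $\chi_\ell^i(\sigma)$. Combined with~(c), the \'etale-Betti comparison yields $\dim_{\bQ_\ell} V_{2i}=\sum_{j} h^{j,2i-j}=h^{i,i}$, while $\dim V_n=0$ in odd degrees.

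For any prime power $q>1$ with $q\equiv 1\bmod N$ and $p\nmid M$, the Frobenius $\Frob_q$ then lies in $\Gal(\bar\bQ/\bQ(\zeta_N))$, so
\[
\tr(\Frob_q\mid V_{2i}) \;=\; h^{i,i}\,\chi_\ell^i(\Frob_q) \;=\; h^{i,i}\, q^i,
\]
while the odd-degree traces vanish, and the trace formula delivers $\#X(\bF_q)=\sum_i h^{i,i}q^i=P(q)$. For the last sentence of the corollary ($M=1$) one needs $N=1$. For $X$ smooth proper over $\bZ$ the representation $V_n$ is unramified at every $p\neq\ell$, so each character arising in $V_n$ has, on the full Galois group, the form $\chi_\ell^i\cdot\varepsilon$ with $\varepsilon$ of finite order ramified only at $\ell$. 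By the independence of~$\ell$ of the characteristic polynomial of $\Frob_{p'}$ on the cohomology of a smooth proper scheme over $\bZ$, the value $\tr(\Frob_{p'}\mid V_{2i})/(p')^i$ is a rational number independent of $\ell$; varying $\ell$ then forces every $\varepsilon(\Frob_{p'})$ to equal $1$, whence $\varepsilon=1$ and $N=1$ works for all $q$.

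The main obstacle I anticipate is precisely this last step: improving the $N$ provided by Theorem~\ref{thm:intro-A}(b) from one with prime divisors in $M\ell$ down to one with prime divisors only in $M$, which relies on the (deep but standard) independence of~$\ell$ for Frobenius characteristic polynomials beyond what Theorem~\ref{thm:intro-A} itself supplies. Everything else is formal bookkeeping with Deligne's weights, the Hodge constraint~(c), and Lefschetz's trace formula combined with proper base change.
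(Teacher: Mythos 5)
Your strategy is sound and reaches the same endpoint as the paper, but the two proofs pin down the cyclotomic exponents by genuinely different mechanisms. The paper routes through Corollary~\ref{cor:lambda-varieties-are-Artin-Tate}, whose proof invokes the potentially semi-stable theorem of Kisin~\cite{Kisin:pst} to read off the Tate-twist multiplicities from Hodge--Tate weights, i.e., it uses $p$-adic Hodge theory \emph{twice}: once to get the cyclotomic decomposition, and again to match multiplicities with Hodge numbers. You instead combine part~(b) of Theorem~\ref{thm:intro-A} with Deligne's weight bounds on Frobenius eigenvalues and part~(c) (via the \'etale--Betti comparison) to force every exponent in degree $n$ to be $n/2$. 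This is a correct and slightly more elementary route to the multiplicities, and it has the benefit of not requiring you to re-open the $p$-adic Hodge-theoretic comparison. Your Lefschetz and proper base change bookkeeping matches the paper's Corollaries~\ref{cor:zeta-poly-in-q} and~\ref{cor:smooth-proper-zeta-poly}.

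You have also correctly identified a subtlety that the paper's proof of Corollary~\ref{cor:smooth-proper-zeta-poly} passes over: the conductor $N$ produced by the abelian-Artin--Tate machinery has its prime divisors among those of $M\ell$ (where $\ell$ is the coefficient prime), not of $M$ alone, and the paper's text simply asserts $N$ can be taken with the prime divisors of $M$. Your fix for $M>1$, taking $\ell$ to divide $M$, is exactly what is needed. For the $M=1$ sentence, your independence-of-$\ell$ idea is in the right spirit, but the step ``varying $\ell$ then forces every $\varepsilon(\Frob_{p'})$ to equal $1$'' is too quick as written: equality of the rational trace $\tr(\Frob_{p'}\mid V_{2i})/(p')^i$ across different $\ell$ does not by itself force the individual root-of-unity eigenvalues to be $1$, since for example a sum containing cancelling $\pm 1$ could equal a sum of all $1$'s in a larger space (this is ruled out here only because the dimension is also fixed). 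The clean argument is: for distinct primes $\ell_1,\ell_2$, independence of $\ell$ gives that $W_{\ell_1}:=V_{2i}\otimes\chi_{\ell_1}^{i}$ and $W_{\ell_2}$ have identical Frobenius characteristic polynomials at almost all primes; by Chebotarev they are isomorphic as representations of the common finite quotient $\Gal(\bQ(\zeta_{\ell_1^{a}\ell_2^{b}})/\bQ)$; but one factors through the $\ell_1$-part and the other through the $\ell_2$-part of that group, so both are trivial. With that Chebotarev step made explicit, your $M=1$ argument is correct, and in fact supplies a detail the paper's own write-up appears to elide.
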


Let us now consider $\bF_1$-points. It is not difficult to show, when $X$ is separated and of
finite type over $\bF_1$, that $X(\bF_1)$ is finite. So it is tempting to hope that the number
of $\bF_1$-valued points would be $P(1)$, the Euler characteristic of $X$. This is typically
false, but if we restrict to complemented points---those whose complement is an open
sub-$\bF_1$-scheme of $X$---then it is true in many situations. For instance, it is true for
toric varieties, when given a certain natural, ``toric'' $\Lambda$-structure. But it is not
always true. It fails for the toric varieties $\bA^1$ and $\bP^1$ if they are given the
Chebychev $\Lambda$-structure (\ref{subsec:F1-points-on-Chebychev}). On the other hand, if all
$\Lambda$-varieties can indeed be built from toric varieties with the toric $\Lambda$-structure,
it might be possible to salvage the formula $X(\bF_1)=P(1)=\chi(X)$ by cleverly reinterpreting
the definitions.

Our final purpose is to consider variations on the definition of $\Lambda$-structure, such as
function-field analogues. For example, instead of working over $\Spec\bZ$, we can work over a
smooth curve $S$ over a finite field $k$. Then our family of commuting Frobenius lifts would be
indexed by the closed points of $S$, and the same procedure as over $\bZ$ gives the notion of a
$\Lambda_S$-structure, a topos of $\bF_1^{S}$-spaces, and a topos map 
$\ltm_{S}\:S\longmap\Spec{\bF_1^S}$. 
Having invented an absolute algebraic geometry relative to $S$, we can ask how it relates to the
usual absolute algebraic geometry relative to $S$, that is, algebraic geometry over $k$. The
answer is that the structure map $\stmap\:S\to\Spec k$ factors naturally
as a composition of topos maps:
	$$
	\xymatrix{
	S\ar^-{\ltm_S}[rr]\ar^-{\stmap}[dr]
		&	& \Spec{\bF^S_1}\ar@{-->}_-{\fftm}[dl] \\
		& \Spec k.
	}
	$$
The map $\fftm$ is not an equivalence, but it is not far from being one. For example, $\fftm^*$
embeds the category of reduced algebraic spaces over $k$ fully faithfully into the category of
spaces over $\bF_1^S$, but there can be schemes defined over $\bF_1^S$ (given by certain
rank-one Drinfeld modules) that do not descend to $k$. As a check to see if this could lead to a
proof of the Riemann hypothesis for $\bZ$, one might examine the translation of Weil's proof of
the Riemann hypothesis for $S$ from algebraic geometry over $k$ to that over $\bF_1^S$.
But since
the current version of our theory says nothing about the archimedean place of $\bQ$, it is
hard to imagine this succeeding without further ideas. Even so, it should be done.

Let us now list the contents of this paper. In section~\ref{sec:lambda-spaces}, we recall the
foundations of the theory of spaces over $\bF_1$ (that is, $\Lambda$-spaces). In
section~\ref{sec:examples}, we give examples of $\bF_1$-schemes. In
section~\ref{sec:sub-lambda-spaces}, we discuss sub-$\bF_1$-spaces and in particular $\bF_1$-valued
points. In section~\ref{sec:function-spaces}, we discuss function spaces over $\bF_1$ and
especially $\GL_n$. In section~\ref{sec:abelian-motives}, we show that abelian motives are
Artin--Tate. This is a result in usual, non-$\Lambda$ number theory needed in the following
section. In section~\ref{sec:p-adic-cohomology}, we discuss the $p$-adic \'etale cohomology of
$\bF_1$-schemes of finite type and implications for point counting. And in
section~\ref{sec:variations}, we consider variations on our approach to $\bF_1$ for function
fields and number fields larger than $\bQ$.

\section*{Other work}

In the early days of this project, I was greatly inspired by Manin's
exposition~\cite{Manin:Lectures-on-zeta}, Soul\'e's paper~\cite{Soule:F1}, and Deninger's
program, for example ~\cite{Deninger:cohomological-approach}. In a strict mathematical sense,
this paper does not owe them much, but their spiritual effect has been deep.

There are, of course, many approaches to absolute algebraic geometry which I did not mention
above. Here are some I know about. One is that an $\bF_1$-algebra should be some kind of
algebraic structure that is set-theoretically weaker than a commutative ring, for example a
commutative monoid. From this point a view an $\bF_1$-vector space is often taken to be a set,
perhaps with some additional weak structure. One could investigate the $K$-theory that comes out
of this, and even aspire to see the place at infinity by incorporating archimedean information
in these structures. Another approach has been to pursue notions of $\zeta$-functions over
$\bF_1$, perhaps independently of any formal definition of $\bF_1$. A final approach is to find
and prove analogues over number fields of basic geometric results over function fields.

For these approaches see the following references:
Baez~\cite{Baez:TWF184},
Connes--Consani~\cite{Connes-Consani:notion-of-geometry-over-f1},
Connes--Consani--Marcoli~\cite{Connes-Consani-Marcoli:fun-with-f1},
Deitmar~\cite{Deitmar:F1-schemes}\cite{Deitmar:F1-and-toric}\cite{Deitmer:zeta-and-k-theory}, 
Diers~\cite{Diers:book}, 
Durov~\cite{Durov:new-approach},
Haran~\cite{Haran:non-additive-geometry} \cite{Haran:mysteries-of-the-real}, 
Kapranov~\cite{Kapranov:absolute-direct-image},
Kapranov--Smirnov~\cite{Kapranov-Smirnov:F1}, 
Kurokawa~\cite{Kurokawa-Koyama:multiple-zeta}\cite{Kurokawa:mutlple-zeta-an-example}\cite{Kurokawa:zeta-functions-over-F1},
Kurokawa--Ochiai--Wakayama~\cite{Kurokawa-et-al:absolute-derivations},
Smirnov~\cite{Smirnov:Hurwitz},
Tate--Voloch~\cite{Tate-Voloch:linear-forms},
and To\"en--Vaqui\'e~\cite{Toen-Vaquie:Under-Spec-Z}.
Several of these writers have other papers on the subject, but I believe these
are representative of their approaches.

\section*{Acknowledgments}

My focused work on $\Lambda$-algebraic geometry began on October 22, 2003, when I read an email
from Ivan Fesenko asking if there were relations between my paper~\cite{Borger-Wieland:PA} with
Ben Wieland and the field with one element. That question instantly gave direction to
some scattered thoughts about $\Lambda$-algebraic geometry and the $p$-adic absolute
point (\ref{subsec:local-absolute-point}).  My interests at the time were 
arithmetically local, and as obvious as the global connection is in retrospect,
it had not occurred to me before that day.
So I thank him greatly.

I would also like to thank Mark Kisin for many conversations about this project over several
years. Most of his influence on this project is on forthcoming work, but even with this paper,
if he had not insisted that I begin writing up what I knew in the case of
$\Lambda$-varieties of finite type over $\bZ$, I never would have uncovered much of what is
here. And since the foundational papers~\cite{Borger:BGWV}\cite{Borger:SLAG} owe their existence
to the present paper, perhaps they also owe some of it to him.

I did some of this work in 2004--2005 at the Institut des Hautes \'Etudes Scientifiques
and the Max-Planck-Institut f\"ur Mathematik. It is a pleasure to thank them
for their generous support and nearly ideal working environments.  I did more
work on this topic in January 2006, as a visitor at the University of Chicago.  I thank 
Alexander Beilinson and Vladimir Drinfeld for making that possible.

I have given a number of lectures on this material, and this paper has benefited from
questions many people have asked, especially Alexandru Buium, Jordan Ellenberg,
and Kirsten Wickelgren.  I would particularly like to thank Wickelgren
for some questions that directly inspired the material in
section~\ref{sec:function-spaces}.


\tableofcontents



\section{$\Lambda$-spaces}
\label{sec:lambda-spaces}

\subsection{} {\em General $\Lambda$-spaces.}
Let us quickly review~\cite{Borger:SLAG}. Let $\wus=\wius$ 
denote the infinite-length big Witt vector functor
from the category of spaces to itself. (The category of spaces is, by definition,
the category of sheaves of sets on the category of affine schemes under the \'etale
topology.) Then $\wus$ carries a monad structure, and a $\Lambda$-structure on a space $X$ 
is by definition an action of $\wus$ on $X$.  Note that if $X$ is an algebraic space,
then the space $\wus(X)$ is ind-algebraic but typically not algebraic. 
This is just the familiar fact that the ring of infinite-length Witt vectors is naturally
a projective limit of rings.

Here are some important examples. If $A$ is a ring, then we have $\wus(\Spec A)=
\colim_n\Spec W_n(A)$, where $W_n$ denotes the functor of (big) Witt vectors
of length~$n$.  Therefore a $\Lambda$-structure on the space $\Spec A$ is the same as a
$\Lambda$-ring structure on $A$ in the usual sense. If $X$ is a flat algebraic space over
$\bZ$, then $\Lambda$-structure on $X$ is the same as a commuting family of endomorphisms
$\psi_p\:X\to X$, one for each prime $p$, such that $\psi_p$ agrees with the $p$-th power
Frobenius map on $X\times_{\Spec\bZ}\Spec\bF_p$.  
If a reduced algebraic space admits a $\Lambda$-structure,
then it must be flat over $\bZ$.
Finally, if $X$ is a $\Lambda$-algebraic space, then $\red{X}$ is a closed
$\Lambda$-algebraic subspace.
(Because a $\Lambda$-structure is given by a monad action, a subspace of a $\Lambda$-space
can have at most one compatible $\Lambda$-structure.)
Non-reduced $\Lambda$-algebraic spaces appear only sporadically in this paper.

The functor $\wus$ has a right adjoint $\wls=\wils$, the arithmetic
jet space functor,
which is a generalized version of Buium's
$p$-jet space functor~\cite{Buium:Arithmetic-diff-equ}. 
If $A$ is a ring, then $\wls(\Spec A) = \Spec \Lambda\bcp A$, where
$\Lambda\bcp A$ denotes the $\Lambda$-ring freely generated by $A$. For example,
$\wls(\bA^1_{\bZ})=\Spec\Lambda$, where $\Lambda$ is the free $\Lambda$-ring on one
generator, the ring of symmetric functions in infinitely many variables.

Let $S=\Spec\bZ$, and let $\Space_S=\Space_{\bZ}$ 
denote the category of spaces. Let $\Space_{S/\Lambda}$
denote the category of $\Lambda$-spaces (with $\Lambda$-equivariant, or rather
$\wus$-equivariant, morphisms). This can also be described using $\wls$. By 
adjunction, $\wls$ inherits a comonad structure from the monad structure on $\wus$. The
category of $\Lambda$-spaces is the same as the category of spaces equipped with
an action of the comonad $\wls$. It is therefore a topos, like $\Space_S$.

\subsection{} \emph{Categorical structure.}
Let $\ltm^*\:\Space_{S/\Lambda}\to\Space_S$ denote the functor that simply strips off the
$\Lambda$-structure. The point of this paper is that $\ltm^*$ can also be thought of as
the functor that strips off the descent data from $\bZ$ to $\bF_1$, and hence that it can be
thought of as the base-change functor from $\bF_1$ to $\bZ$.  Therefore,
we have the following equation:
\begin{equation*}
	\xymatrix@R=20pt@C=40pt{
	\Space_{S}\ar@/^2pc/[dd]^{\ltm_*}\ar@/_2pc/[dd]_{\ltm_!} & &
	\Space_{\bZ}\ar@/^2pc/[dd]^{\txt{\scriptsize Weil\\ \scriptsize restrict}}
		\ar@/_2pc/[dd]_{\txt{\scriptsize forget\\ \scriptsize base}} \\ 
	 & = \quad& \\
	\Space_{S/\Lambda}\ar[uu]_
	{\ltm^*}
	& & \Space_{\bF_1}.\ar[uu]|{S\times_{\bF_1}\vbl}
	}		
\end{equation*}
This means that the structure on the right is defined to be that on the left, or that
we think of the precisely defined left-hand side using the geometric language of the
right-hand side.  Each functor is the left adjoint of the one to its right.
The left adjoint $\ltm_!$ of $\ltm^*$ sends $X$ to its Witt space
$\wus(X)$ with the natural $\Lambda$-structure, and the right adjoint $\ltm_*$ sends $X$
to its arithmetic jet space
$\wls(X)$, again with the natural $\Lambda$-structure. As always the left adjoint of a
base-change functor is called base-forgetting, and the right adjoint is called 
Weil restriction of scalars.  In particular, if one accepts the premise of this paper,
the space 
$\Spec\bZ\times_{\Spec\bF_1}\Spec\bZ$ must be defined to be the Witt space $\wus(\Spec\bZ)$.

These three adjoint functors form, by definition, an essential topos map
$\ltm\:\Space_\bZ\to\Space_{\bF_1}$.  This is what one would would hope to have with any
algebraic geometry over a deeper base than $\Spec \bZ$. Similarly, the base-forgetting
functor $\ltm_!$ is faithful but not full. (See~\cite{Borger:BGWV}, 12.2.)

Yet another way of expressing the point of this paper, in the playful tradition of
the field with one element, is the nonsense formula
	``$\Lambda = \bZ[\Gal(\bZ/\bF_1)]$''.
The meaning of this is that if descent from $\bZ$ to $\bF_1$ were controlled by a finite
group, one would call it $\Gal(\bZ/\bF_1)$.  In that case, descent for rings would
alternatively be controlled by the plethory $\bZ[\Gal(\bZ/\bF_1)]$ whose
underlying ring would be the polynomial ring freely generated by the set $\Gal(\bZ/\bF_1)$.
(See Borger--Wieland~\cite{Borger-Wieland:PA}.)  But the plethory that actually
controls this is $\Lambda$.  So while the group $\Gal(\bZ/\bF_1)$ does
not exist, the polynomial algebra it would generate if it did exist does.

Note that everything above extends to the case where $S$ is the spectrum of the ring of integers
of any number field or any smooth curve over a finite field. Thus for any such $S$, there is a
topos $\Space_{\bF_1^S}=\Space_{S/\Lambda_S}$, the topos of spaces over the $S$-variant of the
field with one element. These toposes are all related as $S$ varies, and as one would expect,
$S=\Spec\bZ$, as above, gives the deepest one. We will return to this in
section~\ref{sec:variations}.

\subsection{} \emph{$\Lambda$-modules.}
An important topic we will not discuss in this paper is the $\Lambda$-analogue of a
quasi-coherent sheaf. The non-linear nature of $\Lambda$-structures makes module theory slightly
subtler than it is in equivariant algebraic geometry under actions of monoids or Lie algebras,
or more generally in the context of To\"en--Vaqui\'e~\cite{Toen-Vaquie:Under-Spec-Z}. The reason
for this, in the language of Borger--Wieland~\cite{Borger-Wieland:PA}, is that the additive
bialgebra of the plethory $\Lambda$ does not agree with the cotangent algebra of $\Lambda$.
Therefore one cannot properly speak about modules without specifying certain extra information.
In the case of $\Lambda$, these are essentially the slopes of the Frobenius operators. I mention
this here only because $\bF_1$-modules are a frequent concern in papers on the field with one
element and I will not address them.

\section{Examples}
\label{sec:examples}

\subsection{} {\em The point.}
The ring $\bZ$ has a unique $\Lambda$-structure---each $\psi_p$ is the identity.
Under this structure, it is the initial object in the category of $\Lambda$-rings. It
is therefore reasonable to denote it $\bF_1$ and call $\Spec \bZ$, viewed as a
$\Lambda$-space, the absolute point.

\subsection{} {\em Monoid algebras.} 
\label{subsec:monoid-algebras}
If $M$ is any 
commutative monoid, the 
monoid algebra $\bZ[M]$ has a natural $\Lambda$-action induced by 
$\psi_p\colon m\mapsto m^p$ for any $m\in M$, and so $\Spec\bZ[M]$ descends naturally to 
$\bF_1$.  I will call this $\Lambda$-action the {\em toric} $\Lambda$-action.  
For example, given a choice of coordinates, 
$\bA^r\times\Gm^s$ equals $\Spec \bZ[\bN^r\times \bZ^s]$, which descends naturally to $\bF_1$.

In fact, we have even more: the monoid scheme
structure on $\Spec\bZ[M]$ also descends to $\bF_1$, 
as does the group structure if $M$ is a group.  Indeed, the coalgebra structure on $\bZ[M]$ 
given by $m\mapsto m\tn m$ for all $m\in M$ is a map of $\Lambda$-rings.  The same is true 
of the counit and, if $M$ is a group, the antipode $m\mapsto m^{-1}$.   For example, all 
split tori can be thought of as group schemes over $\bF_1$.  The group scheme 
$\mu_n=\Spec\bZ[x]/(x^n-1) = \Spec\bZ[\bZ/n\bZ]$ 
also descends to $\bF_1$. Following Kapranov--Smirnov~\cite{Kapranov-Smirnov:F1}, Soul\'e
calls this the base change to $\bZ$ of $\bF_{1^n}$, the field with $1^n$ 
elements~\cite{Soule:F1}.

\subsection{} {\em Limits and colimits.}
\label{subsec:limits}
The category of $\Lambda$-rings has products and 
coproducts, and their underlying rings agree with the same constructions taken in the 
category of rings.  
In fact, this is true for all limits and colimits---in particular 
pull-backs, push-outs, direct limits, and inverse limits.
This is because the forgetful functor $\Ring_{\Lambda}\to\Ring_{\bZ}$
has a left and a right adjoint.

Since it is a topos, $\Space_{S/\Lambda}$ also has all limits and colimits. And since
$\ltm^*$ has a left and a right adjoint, the space underlying any limit, or colimit, of
$\Lambda$-spaces agrees with the limit, or colimit, or the underlying spaces. Further,
since algebraization is a left adjoint, it commutes with $\wus$. 
\marpar{check} 
Therefore the algebraization of a $\Lambda$-space is an algebraic $\Lambda$-space. In
particular, given a system of algebraic spaces with $\Lambda$-actions, the colimit taken
in the category of algebraic spaces has a unique compatible $\Lambda$-action. (The
analogous fact for limits is true simply because the subcategory of $\Space_S$ consisting
of algebraic spaces is closed under limits.)

For the same reason,
the affinization of an algebraic $\Lambda$-space is an affine $\Lambda$-space.
\marpar{check}

\subsection{} {\em Toric varieties.} A toric variety is a colimit of spectra of 
monoid rings, where the maps in the system are open immersions
induced by maps of the monoids.  Therefore the colimit in $\Space_S$ is
an algebraic space (and even a scheme).  By~\ref{subsec:monoid-algebras}
and~\ref{subsec:limits},
they carry natural $\Lambda$-actions.  Thus toric varieties descend to $\bF_1$. 
(Compare Soul\'e~\cite{Soule:F1}.) 
In particular, projective spaces $\bP^n$ do, once we choose homogeneous coordinates.
Then the Frobenius lifts are given by 
	$$
	\psi_p\colon [x_0,\dots,x_n]\mapsto [x_0^p,\dots,x_n^p].
	$$

\subsection{} {\em The Chebychev line.}  
\label{subsec:chebychev-line}
Clauwens~\cite{Clauwens:Line} has used Ritt's
work~\cite{Ritt:Permutable}\cite{Ritt:Permutable-errata} to argue that, up to isomorphism, the
affine line $\Spec\bZ[x]$ has exactly one $\Lambda$-structure besides the toric one
of~\ref{subsec:monoid-algebras}. It can be described as follows. The ring $\bZ[t^{\pm 1}]$,
endowed with the toric $\Lambda$-action, has a $\Lambda$-involution $t\mapsto t^{-1}$.
By~\ref{subsec:limits}, the fixed subring is naturally a $\Lambda$-ring. It is freely generated
as a ring by $x=t+t^{-1}$, and this gives the other $\Lambda$-action on $\bZ[x]$. It also has a
simple $K$-theoretic interpretation as the representation ring of the algebraic group $\SL_2$,
but in this paper we are regarding the connection between $\Lambda$-rings and $K$-theory as a
curiosity. The polynomials $\psi_p(x)$ are Chebychev polynomials:
	\[
	\psi_2(x) = x^2-2, \quad \psi_3(x) = x^3-3x, \quad \psi_5(x)=x^5-5x^3+5x, \quad \dots.
	\]

More generally, any subgroup of $\GL_n(\bZ)$ acts on the toric $\Lambda$-ring 
$\bZ[x_1^{\pm 1},\dots,x_n^{\pm 1}]$, and the invariant subrings give more examples of 
$\Lambda$-rings.  For example, if we take the permutation representation 
$S_n\to\GL_n(\bZ)$ of the $n$-th symmetric group, the invariant subring is
isomorphic to $\bZ[\lambda_1,\dots,\lambda_{n-1},\lambda_n^{\pm 1}]$ and thus
gives a non-toric $\Lambda$-action on $\bA^{n-1}\times\Gm$.

It would be interesting to generalize Clauwens' results.  For example, are there
\marpar{delete?}
only finitely many isomorphism classes of $\Lambda$-structures on $\bA^2$?

\subsection{} {\em Singular lines.} 
\label{subsec:singular-lines}
We can divide out, not just by group actions, as in~\ref{subsec:chebychev-line},
but also by any $\Lambda$-equivalence relation, whether we take the quotient
in the category of affine 
$\Lambda$-spaces, algebraic $\Lambda$-spaces, or all $\Lambda$-spaces.  
For instance, on $\Gm$ with the toric $\Lambda$-action, we can identify $1$ and $-1$ 
to make a nodal line 
	\[
	A = \setof{f(z)\in\bZ[t^{\pm 1}]}{f(-1)=f(1)},
	\]
or we can identify $1$ with itself to
order two to make a cuspidal line
	\[
	A' = \setof{f(z)\in\bZ[t^{\pm 1}]}{f'(1)=0}.
	\]
It is easy to check that these subrings of $\bZ[t^{\pm 1}]$ are sub-$\Lambda$-rings.

Observe that we can identify the points $1,q\in\Gm$ only when $q=\pm 1$---otherwise, the
$\psi_p$ operators would fail to descend to the quotient. But if we use $\Lambda$-algebraic
parameter spaces, there are non-discrete moduli. For instance, in the family
	$$
	\Gm\times\Gm \longlabelmap{\pr_1} \Gm,
	$$
we can identify the diagonal section $\Delta$ and the identity section $\id_{\Gm}\times 1$, to
get the family of nodal lines $\Gm/(1\sim q)$ parameterized by $q\in\Gm$. The result is a
perfectly legitimate $\Lambda$-algebraic family of nodal $\Lambda$-quotients of $\Gm$. The
reason why before $q$ could only lie in a finite set is that there we insisted that the
endomorphisms $\psi_p$ act trivially on the parameter space. We will see below that an algebraic
$\Lambda$-space of finite type over $\bZ$ has only finitely many points (with coordinates in
$\bC$, say) fixed by the $\psi_p$ operators~(\ref{pro:finiteness-of-periodic-spaces}). Therefore
in any $\Lambda$-algebraic family, only finitely many of the fibers will be stable under the
$\psi_p$. In particular this will be true for any universal family, assuming the
$\Lambda$-moduli space is of finite type. So the finiteness phenomenon above is rather
general.

We can also contract any $\Lambda$-invariant modulus on $\Gm$.  If $f(x)$ is a product
of polynomials the form $x^n-1$, then the two maps
	$$
	\bZ[x^{\pm 1}] \rightrightarrows \bZ[x^{\pm 1}]/(f(x)),
	$$
one given by $x\mapsto x$ and the other by $x\mapsto 1$, are $\Lambda$-ring maps.
Therefore their equalizer is a $\Lambda$-ring.  Its spectrum is $\Gm$ with the 
zero locus of $f(x)$ contracted to a point.

Last, these constructions can be used to make $\Lambda$-schemes that cannot be covered by open
affine $\Lambda$-schemes. In particular, it is inaccurate to say that $\Lambda$-schemes are formed
by gluing $\Lambda$-rings together in the Zariski topology, though it is generally the right idea.
The following example is due to Ben Wieland. Consider $\bP^1$ with the toric $\Lambda$-structure,
and let $X$ be the quotient by the $\Lambda$-equivalence relation $0\sim \infty$. Then there is no
open immersion $j\:U\to X$ which has the following properties: $U$ is an affine $\Lambda$-scheme,
$j$ is a $\Lambda$-map, and the nodal point $0=\infty$ is in the image. Indeed, if there were such
a neighborhood $U$, then since it would be affine, the set $U(\bC)$ would be the complement in
$X(\bC)$ of a finite nonempty subset $T$ of $\Gm(\bC)$. But $U\cap\Gm$ would also be a
sub-$\Lambda$-space of $\Gm$. Since $\psi_n$ on $\Gm$ is the $n$-th power map, $T$ would have to be
closed under the extraction of $n$-th roots, which is not the case for any finite nonempty subset
of $\bC^*$.

\subsection{} {\em Zero-dimensional varieties.}  As shown in
Borger--de~Smit~\cite{Borger-deSmit:integral-lambda-models}, it follows from
class field theory that every 
$\Lambda$-ring which is both finite over $\bZ$ and reduced is contained in a product of 
cyclotomic fields.  It is in fact isomorphic to a sub-$\Lambda$-ring of a product of 
toric $\Lambda$-rings of the form $\bZ[x]/(x^n-1)$.  
This can be viewed as an integral version of the Kronecker--Weber theorem.
Thus even zero-dimensional $\Lambda$-algebraic geometry is somewhat interesting.
In fact, the proofs of several theorems about higher-dimensional $\Lambda$-varieties use
the zero-dimensional theory in key ways. \marpar{More precise?}

\subsection{} {\em Non-example: flag varieties.} 
It follows from a theorem of Paranjape and Srinivas~\cite{Panjarape-Srinivas:self-maps} that no
flag varieties besides projective spaces $\bP^n$ admit even one Frobenius lift $\psi_p$.
Therefore, besides $\bP^n$, flag varieties are not defined over $\bF_1$.

In~\ref{cor:lambda-varieties-are-Artin-Tate} below, we show that there are strong motivic
conditions on a variety for it to descend to $\bF_1$. But in the case of flag varieties, the
obstruction is not in the motive. Indeed, flag varieties are paved by affine spaces and are
therefore indistinguishable from them from the point of view of motives. It would be interesting to
know whether flag varieties admit a weakened version of a $\Lambda$-structure but which
is still stronger than being paved by $\Lambda$-varieties.
\marpar{Grodal in appendix?}

\subsection{} {\em Non-example: curves of genus $g\geq 1$.}
Let $C_{\bQ}$ be a connected smooth proper curve over $\bQ$ of genus $g\geq 1$. Choose in integer
$M\geq 1$ such that $C_{\bQ}$ has a a connected smooth proper model $C$ over $\bZ[1/M]$. Then $C$
has no $\Lambda$-structure. One can see this as follows.

If $g\geq 2$, let $p$ be a prime number such that $p\nmid M$. Then since $g\geq 2$, the map
$\psi_p$ cannot be a constant map on the fiber over $\bQ$ because it is not on the fiber over
$\bF_p$. Therefore it must be an automorphism on the fiber over $\bQ$. Further, there must exist an
integer $n\geq 1$ such that $\psi_p^{\circ n}$ is the identity on the fiber over $\bQ$. But as
$C_{\bQ}$ is dense in $C$, we see that $\psi_p^{\circ n}$ is the identity. This contradicts the
fact that $\psi_p$ is the Frobenius map on $C_{\bF_p}$, which is a nonempty curve.

If $g\geq 1$, then for any prime $p\nmid M$, there is a finite extension $K$ of $\bQ_p$ and point
$e\in C(K)=C(\sO_K)$, where $\sO_K$ denote the integral closure of $\bZ_p$ in $K$.
Let us now take the group law on $C_{\sO_K}$ to be the one for which $e$ is
the identity, and let $E$ denote the endomorphism ring $\End(C_{\bar\bQ_p})$. By enlarging $K$, we
may assume $E=\End(C_K)$. Note that $E$ is an integral domain of rank $1$ or $2$ over $\bZ$.

Then there is an element $\varphi\in E$ such that $\psi_p(x)=\psi_p(e)+\varphi(x)$, for all points
$x$. Therefore on each fiber of $C_{\sO_K}$ over $\sO_K$, the degree of $\psi_p$ is
$\varphi\bar\varphi\in\bZ$. But on the fiber over the residue field of $\sO_K$, the map $\psi_p$
agrees with the base-change of the $p$-th power Frobenius map, which has degree $p$. Therefore, we
have $\varphi\bar\varphi=p$. This rules out $E=\bZ$. To rule out the other case, observe that the
same equation implies $p$ is not inert in $E$. But since $p$ was allowed
to be any sufficiently large prime, this is impossible.

\section{Sub-$\Lambda$-spaces}
\label{sec:sub-lambda-spaces}

\subsection{} \emph{Periodic primes.}
\label{subsec:periodic-primes}
Let $X$ be a $\Lambda$-space.
Let us say that a prime number $p$ is \emph{periodic} if there exists an integer
$m\geq 1$ such that the endomorphism $\psi_p^{\circ m}$ of $X$ is the identity.
We also say that $m$ a \emph{period} of $p$. 
(Also see~Davydov~\cite{Davydov:periodic-lambda-rings}.)

\begin{proposition}\label{pro:finiteness-of-periodic-spaces}
	Let $X$ be a separated algebraic $\Lambda$-space of finite type over $\bZ$ 
	with infinitely many periodic primes. Then $X$ is affine and quasi-finite over $\bZ$.
\end{proposition}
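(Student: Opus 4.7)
The plan is to extract from each identity $\psi_p^{\circ m}=\id_X$ enough rigidity to force the fiber $X_{\bF_p}$ to be finite, then to use the infinitude of periodic primes to propagate this to the generic fiber, and finally to combine quasi-finiteness with Zariski's Main Theorem to obtain affineness.

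First I would replace $X$ by $\red{X}$: the reduction is a closed $\Lambda$-subspace inheriting the operators $\psi_p$ (so its periodic primes coincide with those of $X$), and both affineness and quasi-finiteness over $\bZ$ depend only on the underlying reduced subspace. By the flatness statement recalled in the excerpt, $\red{X}$ is then flat over $\bZ$. Fix now a periodic prime $p$ of period $m$ and restrict the identity $\psi_p^{\circ m}=\id_X$ to $X_{\bF_p}$. By the defining property of a $\Lambda$-structure, $\psi_p|_{X_{\bF_p}}$ coincides with the absolute Frobenius $F$, so $F^{\circ m}=\id_{X_{\bF_p}}$. By the naturality of the absolute Frobenius, every geometric point $y\:\Spec\bar\bF_p\to X_{\bF_p}$ satisfies $y\circ F^{\circ m}_{\Spec\bar\bF_p}=y$; interpreting this on the corresponding ring map, the image of $\sO_{X_{\bF_p},y}$ in $\bar\bF_p$ is fixed by $F^{\circ m}$ and is therefore contained in $\bF_{p^m}$. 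So every closed point of $X_{\bF_p}$ has residue field in $\bF_{p^m}$, which means $X_{\bF_p}(\bF_{p^m})=X_{\bF_p}(\bar\bF_p)$. Since $X_{\bF_p}$ is of finite type, the left-hand side is finite, so $X_{\bF_p}$ is zero-dimensional and finite.

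With this rigidity in hand, I would pass to the generic fiber. Generic flatness gives a cofinite open $U\subseteq\Spec\bZ$ over which $X$ is flat, and there the fiber dimension is locally constant. Infinitely many periodic primes lie in $U$, so $X_\bQ$ has the same dimension as their finite special fibers, namely zero. Since $X$ is reduced and flat over $\bZ$, it has no vertical components: each irreducible component dominates $\Spec\bZ$, has zero-dimensional generic fiber, and therefore absolute dimension one. The fiber of such a component over any closed point is a proper closed subset of a one-dimensional irreducible space, hence zero-dimensional; taking the union over the finitely many components of $X$ shows that every fiber of $X\to\Spec\bZ$ is finite, so $X$ is quasi-finite over $\bZ$. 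For affineness I would then apply Zariski's Main Theorem for algebraic spaces: a separated, quasi-finite morphism of algebraic spaces with Noetherian target is representable and factors through a finite morphism. So $X$ is a scheme and embeds as a quasi-compact open in some $\Spec B$, where $B$ is a finite $\bZ$-algebra. Since $\Spec B$ is a one-dimensional Noetherian affine scheme, every quasi-compact open subscheme is affine, and so is $X$.

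The main obstacle is the rigidity extracted in the first paragraph. Everything afterwards is a formal chain of generic flatness, the dimension formula for dominant morphisms to $\Spec\bZ$, and Zariski's Main Theorem; what really does the work is the observation that a single global equation $\psi_p^{\circ m}=\id_X$ on a finite-type algebraic space forces $X_{\bF_p}$ to be defined over the finite field $\bF_{p^m}$ and hence to be finite.
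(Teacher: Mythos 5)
Your proposal is correct and follows the same overall architecture as the paper's proof: reduce to the reduced (hence flat) case, show the special fiber over a periodic prime is finite, propagate finiteness to the generic fiber using the infinitude of periodic primes, deduce quasi-finiteness, then invoke Zariski's Main Theorem and the fact that open subschemes of one-dimensional Noetherian affine schemes are affine. Two of the intermediate steps are carried out by different means, however. For the finiteness of $X_{\bF_p}$, the paper isolates a lemma that passes to an affine \'etale cover $U=\Spec B$, observes $B$ is reduced and sits in a product of fields on which the $p$-power map has period $m$, and concludes $U$ (hence $X_{\bF_p}$) is a finite disjoint union of spectra of finite subfields of $\bF_{p^m}$; you instead argue directly with geometric points, using naturality of the absolute Frobenius and the identity $F^{\circ m}=\id$ to bound residue fields, then count $\bF_{p^m}$-points. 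Both work; the paper's argument is a bit cleaner for algebraic spaces since it reduces outright to a ring-theoretic statement, whereas your version needs the (true but slightly more delicate) facts that geometric points of an algebraic space are governed by residue fields and that finitely many geometric points of a finite-type space over a field force it to be finite. For propagation to the generic fiber, the paper cites EGA IV (9.2.6.2) on constructibility of the quasi-finite locus, while you use flatness and constancy of fiber dimension; again both are valid, and yours is arguably more self-contained, though the invocation of generic flatness is redundant once you know $\red{X}$ is flat over all of $\bZ$.
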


\begin{proof}
	Let us consider quasi-finiteness first.  It suffices
	to assume $X$ is reduced and, hence, flat over $\bZ$.  
	(See~\cite{Borger:SLAG}.) \marpar{precise ref}
	Therefore it is enough to show that $X\times_{\Spec\bZ}\Spec\bQ$ is finite over 
	$\Spec \bQ$.
	
	Let $p$ be one of the given primes, and let $X_p$ denote the fiber of $X$ over $p$.
	Then $\psi_p$ is periodic, and therefore the $p$-th power Frobenius map on $X_p$
	is periodic.  By~\ref{lem:Frob-periodic-implies-discrete} below, the fiber $X_p$ is finite 
	over $\bF_p$.  Because there are infinitely many such $p$,
	$X$ is finite over $\bZ$ at a dense set of scheme-theoretic points.
	And because $X$ is of finite type, its fiber over $\bQ$ is finite.
	(See EGA IV (9.2.6.2)~\cite{EGA-no.28}.) 

	Affineness follows from quasi-finiteness.
	Since $X$ is separated,
	of finite type, and quasi-finite over $\bZ$, 
	Zariski's Main Theorem~\cite{EGA-no.11}, III 4.4.3, implies
	it is an open subscheme of a 
	scheme which is finite over $\bZ$, and any such scheme is affine.
	\marpar{Easier argument?}
\end{proof}

\begin{lemma}\label{lem:Frob-periodic-implies-discrete}
	Let $X$ be an algebraic space of finite type over $\bF_p$, and let
	$\Fr_X$ denote the $p$-th power Frobenius map on $X$.
	If $\Fr_X^{\circ n}$ is the identity map, then $X$ is a finite disjoint union of
	spaces of the form $\Spec F$, where $F$ is a field of degree at most $n$ over $\bF_p$.
\end{lemma}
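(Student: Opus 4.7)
The hypothesis $\Fr_X^{\circ n}=\id$ implies that $\Fr_X$ is an automorphism, with inverse $\Fr_X^{\circ(n-1)}$, which is a very strong constraint. My plan is to exploit this in two stages: first to show that $X$ is finite étale over $\bF_p$---and hence a finite disjoint union of spectra of finite fields---and then to inspect the action on geometric points so as to bound the degrees of these fields by $n$.

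The first stage proceeds after passing to an étale chart. Choose an étale atlas $\pi\colon U\to X$ with $U=\Spec A$ affine of finite type over $\bF_p$. By naturality of absolute Frobenius, $\pi\circ\Fr_U=\Fr_X\circ\pi$; since $\pi$ is étale, the resulting square is Cartesian (relative Frobenius is an isomorphism for étale morphisms of $\bF_p$-schemes). Iterating, $\Fr_U^{\circ n}$ fits in a Cartesian square over $\Fr_X^{\circ n}=\id$, and is therefore an automorphism of $U$ over $X$. In particular, the $p^n$-th power map $a\mapsto a^{p^n}$ on $A$ is a ring automorphism: injectivity says that $A$ has no nilpotents, and surjectivity says that every residue field $\kappa(\mathfrak{q})$ of $A$ is perfect.

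A perfect field extension of $\bF_p$ that is finitely generated over $\bF_p$ must be finite: if the transcendence degree $d$ were positive, with transcendence basis $t_1,\ldots,t_d$, perfectness would force $\kappa(\mathfrak{q})$ to contain a $p^k$-th root of $t_1$ for every $k\geq 1$, contradicting the finiteness of $\kappa(\mathfrak{q})$ over $\bF_p(t_1,\ldots,t_d)$. Hence $A$ is Artinian and reduced with finite residue fields, so $A$ is a finite product of finite fields, and $U$ is finite étale over $\bF_p$. The étale equivalence relation $U\times_X U\rightrightarrows U$ also consists of finite étale $\bF_p$-schemes, so its quotient $X$ is again a finite disjoint union $\bigsqcup_{\xi}\Spec F_\xi$ with each $F_\xi$ a finite extension of $\bF_p$.

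It remains to bound the degrees by $n$. A geometric point $\bar x\colon\Spec\bar\bF_p\to X$ at $\xi$ is the same thing as an $\bF_p$-embedding $F_\xi\hookrightarrow\bar\bF_p$. By naturality of Frobenius, the identity $\Fr_X^{\circ n}\circ\bar x=\bar x$ unwinds to $\bar x(a)^{p^n}=\bar x(a)$ for every $a\in F_\xi$, which forces $\bar x(F_\xi)\subseteq\bF_{p^n}$. Thus $[F_\xi:\bF_p]$ divides $n$, as required. I expect the main obstacle in this plan to be the descent step from the algebraic space to its affine chart: verifying the Cartesian property for absolute Frobenius along étale morphisms and drawing from it that $\Fr_U^{\circ n}$ is an $X$-automorphism of $U$. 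Once one is back in the ring-theoretic setting, the remaining ring- and field-theoretic steps are elementary.
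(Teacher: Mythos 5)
Your proof is correct, and it follows essentially the same strategy as the paper: pass to an affine étale cover $U=\Spec A$, use the $p^n$-th power map to show $A$ is reduced with perfect (hence finite) residue fields, conclude $U$ is a finite disjoint union of spectra of finite fields, and descend to $X$.

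In fact your treatment of the descent to the cover is more careful than the paper's. The paper asserts that $\Fr^{\circ n}_U$ is the \emph{identity} on $U$; this does not follow from $\Fr^{\circ n}_X=\id$, and is false in general --- take $X=\Spec\bF_p$, $n=1$, and $U=\Spec\bF_{p^2}$. What does follow, from the Cartesian Frobenius square for étale morphisms, is exactly what you state: $\Fr^{\circ n}_U$ is an automorphism of $U$ over $X$. That is enough to get reducedness and perfectness of $A$, hence finiteness. But it is \emph{not} enough to bound the degrees of the residue fields of $U$ by $n$, which is why the paper's final sentence ("the image in each field is a field of degree at most $n$") is not justified for $U$ as written. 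You correctly sidestep this by verifying the degree bound on $X$ itself --- via geometric points, or, even more directly, by noting that $\Fr_X^{\circ n}=\id$ restricts on each connected component $\Spec F_\xi$ to $x\mapsto x^{p^n}$ being the identity on $F_\xi$, so $F_\xi\subseteq\bF_{p^n}$. So your version repairs a small gap in the paper while keeping the same overall shape of argument.

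One minor remark on the descent from $U$ to $X$: you invoke the étale equivalence relation $U\times_X U\rightrightarrows U$. To conclude cleanly, it suffices to note that $X$ is then quasi-compact and étale over $\bF_p$, and any such algebraic space is a scheme (indeed a finite disjoint union of spectra of finite separable extensions of the base field); this is the same implicit step the paper takes.
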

\begin{proof}
	Let $U=\Spec B$ be an affine \'etale cover of $X$.  
	Then $\Fr^{\circ n}_U$ is the identity map on $U$.
	In particular, $B$ is reduced and is hence a subring of a finite product of fields.
	Because the $p$-th power map on $B$ has period $n$, the image in each field is a
	field of degree at most $n$ over $\bF_p$.  Since $U$ is a finite disjoint union
	of spectra of finite fields, and since $U$ covers $X$, $X$ is also such a space.
\end{proof}

\begin{proposition}\label{pro:finitely-many-maps-from-periodic-spaces}
 	Let $X$ and $Y$ be separated algebraic $\Lambda$-spaces of finite type over $\bZ$.
	Assume that $X$ is reduced and has infinitely many periodic primes.
	Then $\Hom_{\Lambda}(X,Y)$ is finite.
\end{proposition}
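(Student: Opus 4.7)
The plan is to show that any $\Lambda$-morphism $f\colon X\to Y$ must factor through a canonical closed $\Lambda$-subscheme $Y_0\subseteq Y$ which is itself quasi-finite over $\bZ$, and then to bound morphisms into such a small scheme by restriction to the generic fiber.

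Let $S$ be the infinite set of periodic primes of $X$, and for each $p\in S$ fix a period $m_p\geq 1$ with $\psi_p^{\circ m_p}=\id_X$. For each $p\in S$, I would let $Z_p\subseteq Y$ denote the equalizer of $\psi_p^{\circ m_p},\id_Y\colon Y\to Y$; this is a closed subscheme of $Y$ since $Y$ is separated, and it is stable under every $\psi_q$ because the $\psi_q$ commute with $\psi_p^{\circ m_p}$, so $Z_p$ is a closed $\Lambda$-subscheme of $Y$. I would then set $Y_0=\bigcap_{p\in S}Z_p$. Since $Y$ is Noetherian, this a priori infinite intersection equals a finite sub-intersection, so $Y_0$ is a closed $\Lambda$-subscheme of $Y$ of finite type over $\bZ$. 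Crucially, since $Y_0\subseteq Z_p$ for every $p\in S$, each $p\in S$ remains periodic on $Y_0$ with period dividing $m_p$; hence $Y_0$ inherits the infinite set $S$ of periodic primes.

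Any $\Lambda$-morphism $f\colon X\to Y$ factors through each $Z_p$: indeed, $\psi_p^{\circ m_p}\circ f=f\circ\psi_p^{\circ m_p}=f$, so $f$ equalizes the pair defining $Z_p$. Hence $f$ factors through $Y_0$, and proposition~\ref{pro:finiteness-of-periodic-spaces} applied to $Y_0$ shows that $Y_0$ is quasi-finite and affine over $\bZ$, with finite generic fiber $Y_{0,\bQ}$. Since $X$ is reduced, and hence flat over $\bZ$, the generic fiber $X_\bQ$ is schematically dense in $X$; moreover, the proof of proposition~\ref{pro:finiteness-of-periodic-spaces} shows that $X_\bQ$ is itself a finite $\bQ$-scheme. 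Separatedness of $Y_0$ then gives an injection $\Hom_\Lambda(X,Y)=\Hom_\Lambda(X,Y_0)\hookrightarrow\Hom(X_\bQ,Y_{0,\bQ})$ by restriction to the generic fiber, and the codomain is finite because both $X_\bQ$ and $Y_{0,\bQ}$ are finite $\bQ$-schemes. The main obstacle is the construction of $Y_0$: it must simultaneously be closed, a $\Lambda$-subscheme, of finite type over $\bZ$, receive every $\Lambda$-morphism from $X$, and retain the entire set $S$ as periodic primes so that proposition~\ref{pro:finiteness-of-periodic-spaces} applies—constraints neatly reconciled by the Noetherian collapse of the intersection $\bigcap_{p\in S}Z_p$.
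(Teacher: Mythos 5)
Your proposal takes essentially the same route as the paper's proof: form the intersection of the equalizer loci $Z_p\subseteq Y$, observe it is a closed $\Lambda$-subscheme by Noetherianity, note that every $\Lambda$-map from $X$ lands in it, apply~\ref{pro:finiteness-of-periodic-spaces}, and then pass to the generic fiber. (The paper phrases the last step ring-theoretically, replacing $Y$ by the reduction of $Y'$ and using that torsion-free rings embed in their rationalizations; you phrase it geometrically via separatedness and schematic density of $X_{\bQ}$.)

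One small inaccuracy worth repairing: the final clause ``the codomain is finite because both $X_\bQ$ and $Y_{0,\bQ}$ are finite $\bQ$-schemes'' is not quite a valid inference, since two non-reduced finite $\bQ$-schemes can admit infinitely many morphisms (e.g.\ $\End(\Spec \bQ[\epsilon]/(\epsilon^2))$ is infinite). What saves you is that $X_{\bQ}$ is \emph{reduced} (being a localization of the reduced $X$), hence a finite \'etale $\bQ$-algebra, so every map $X_\bQ\to Y_{0,\bQ}$ factors through $(Y_{0,\bQ})_{\mathrm{red}}$, which is also finite \'etale, and the finiteness then follows from Galois theory exactly as in the paper. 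You should say that reducedness of $X_\bQ$, not merely finiteness, is what closes the argument.
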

\begin{proof}
	Since $X$ is reduced, we can assume $Y$ is reduced.	
	We can also assume that $Y$ satisfies the same periodicity conditions as $X$.
	\marpar{more detail?}%
	Indeed, let $p$ be a periodic prime of $X$, and let $m_p$ denote its minimal period.
	Then the equalizer $Y_p$ of $\psi_p^{\circ m_p}$ and the identity
	map is a closed sub-$\Lambda$-space of $Y$.  Therefore, so is the
	reduced subspace of $Y'=\cap_p Y_p$, where $p$ runs over all periodic primes of $X$.
	But any map $X\to Y$ factors through $Y'$, which as a closed subspace of $Y$
	is of finite type over $\bZ$.
	Therefore it is enough to assume $Y=Y'$, which is to say that $Y$ satisfies the
	same periodicity conditions as $X$. 
	
	By~\ref{pro:finiteness-of-periodic-spaces}, there are generically finite rings $A$ and 
	$B$ such that $X=\Spec A$ and $Y=\Spec B$.
	Because $A$ and $B$ are reduced $\Lambda$-rings, they are torsion free, and thus
		$$
		\Hom_{\Lambda}(X,Y) = \Hom(B,A) \subseteq 
			\Hom(\bQ\tn_{\bZ}B,\bQ\tn_{\bZ}A).
		$$
	By Galois theory, there are only finitely many ring maps between 
	two finite \'etale algebras over a field; so $\Hom_{\Lambda}(X,Y)$ is finite.
\end{proof}

\begin{corollary}\label{cor:finitely-many-f1-points}
	Let $X$ be a separated algebraic space of finite type over $\bF_1$.
	Then there are only finitely many $\Lambda$-maps
	$\mu_n\to X$, where $\mu_n$ is defined in (\ref{subsec:monoid-algebras}).
	In particular, $X$ has only finitely many $\bF_1$-valued points.
\end{corollary}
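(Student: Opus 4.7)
The plan is to deduce this directly from Proposition~\ref{pro:finitely-many-maps-from-periodic-spaces} by taking $\mu_n$ as the source and $X$ as the target, so the task reduces to checking that $\mu_n$ satisfies the hypotheses imposed on $X$ in that proposition.

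First I would record the basic properties of $\mu_n=\Spec\bZ[x]/(x^n-1)$ as a $\Lambda$-space. It is affine, hence separated, and evidently of finite type over $\bZ$. Under the toric $\Lambda$-structure of~\ref{subsec:monoid-algebras}, the Frobenius lift $\psi_p$ is given by $x\mapsto x^p$. To see $\mu_n$ is reduced, note that $\bQ\tn_\bZ\bZ[x]/(x^n-1)=\bQ[x]/(x^n-1)$ is a product of cyclotomic fields since $x^n-1$ is separable over $\bQ$; since $\bZ[x]/(x^n-1)$ is $\bZ$-torsion free, it embeds into this product and is therefore reduced.

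Next I would verify the periodic-prime condition. For every prime $p$ not dividing $n$, let $m$ be the multiplicative order of $p$ in $(\bZ/n\bZ)^{\times}$. Then $p^m\equiv 1\pmod n$, so $\psi_p^{\circ m}(x)=x^{p^m}=x$ in $\bZ[x]/(x^n-1)$, i.e.\ $p$ is periodic with period~$m$. There are infinitely many such primes, so $\mu_n$ has infinitely many periodic primes. Applying Proposition~\ref{pro:finitely-many-maps-from-periodic-spaces} with these inputs yields finiteness of $\Hom_{\Lambda}(\mu_n,X)$.

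For the ``in particular'' clause, I would identify $\bF_1$-points of $X$ as $\Lambda$-maps $\Spec\bZ\to X$, where $\Spec\bZ$ carries the trivial $\Lambda$-structure (the absolute point $\bF_1$). Since $\mu_1=\Spec\bZ[x]/(x-1)\iso\Spec\bZ$ as rings and the toric $\Lambda$-action on $\mu_1$ is trivial (because $\psi_p(1)=1$), we have $\mu_1=\bF_1$ as $\Lambda$-spaces, so $X(\bF_1)=\Hom_{\Lambda}(\mu_1,X)$ is finite by what has just been shown. No step here is a serious obstacle: the only mild point is reducedness of $\mu_n$, which as above follows by passing to $\bQ$.
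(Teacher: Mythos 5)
Your proof is correct and follows exactly the route the paper intends: the corollary is stated immediately after Proposition~\ref{pro:finitely-many-maps-from-periodic-spaces} with no separate proof, precisely because it is meant to be a direct application of that proposition with $\mu_n$ (affine, reduced, of finite type, with every prime $p\nmid n$ periodic of period $\mathrm{ord}_n(p)$) as the source, and $\mu_1=\bF_1$ giving the ``in particular'' clause. Your verifications of reducedness of $\bZ[x]/(x^n-1)$ and of the periodic-prime condition are exactly the checks that are being elided.
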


\subsection{} \emph{Primitive $\Lambda$-spaces and complemented sub-$\Lambda$-spaces.} 
\label{subsec:complemented-points}
Let $f:Y\to X$ be a map of $\Lambda$-algebraic spaces which is a closed (resp.\ open)
immersion.  Then $Y$ (or better, $f$) is said to be \emph{complemented} if the complementary
open (resp.\ reduced closed) algebraic subspace $T$
admits a $\Lambda$-structure such that the map $T\to X$
is a $\Lambda$-map.  (Compare SGA 4 IV 9.1.13c~\cite{SGA4.1}.)
Note that because $T\to X$ is a monomorphism, 
the $\Lambda$-structure on $T$, when it exists, must be unique.

We also say that a $\Lambda$-space $X$ is \emph{primitive} if it is nonempty and its only
nonempty complemented closed sub-$\Lambda$-space is itself.

Observe that if $X'\to X$ is a $\Lambda$-morphism and $Y$ is a complemented closed (resp.\ open)
algebraic subspace of $X$, then its preimage $X'\times_X Y$ is a complemented closed (resp.\
open) algebraic subspace of $X'$. Also it is clear that finite intersections of complemented
closed (resp.\ open) $\Lambda$-algebraic subspaces are again complemented. Therefore the same is
true for finite unions.

For example, consider $\bA^1$ with the toric $\Lambda$-structure.
The $\bZ$-valued $\Lambda$-points of $\bA^1$ are in bijection with 
$\Hom_\Lambda(\bZ[x],\bZ)$, which agrees with 
	$$
	\setof{x\in\bZ}{x^p=x\,\text{for all primes $p$}} = \{0,1\}.
	$$
So $0$ and $1$ are the only $\bF_1$-valued points of the toric $\bA^1$.
The point $0$ is complemented, because its complement is the toric $\Gm$.
But the point $1$ is not complemented, because the preimage of $1$ under
$\psi_2$, say, is $\mu_2$, which is not contained in $\{1\}$.
More generally, the set of $\bF_1$-valued points of $\bA^d$ is $\{0,1\}^d$,
but the only complemented point is the origin.

\begin{proposition}\label{pro:tori-are-primitive}
	The $\Lambda$-space $\Gm^d$ is primitive, for any integer $d\geq 0$.
\end{proposition}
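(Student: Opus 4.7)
The plan is to show that any nonempty complemented closed sub-$\Lambda$-space $V\subseteq\Gm^d$ must equal $\Gm^d$. Replacing $V$ by $\red{V}$ preserves both the underlying set and the open complement, so $\red{V}$ is still complemented; I may thus assume $V$ is reduced, hence flat over $\bZ$ (\S\ref{sec:lambda-spaces}), and it suffices to show $V_{\bar\bQ}=\Gm^d_{\bar\bQ}$. Under the toric $\Lambda$-structure, $\psi_p$ acts on $\Gm^d_{\bar\bQ}$ as the $p$-th power map $[p]$. Being a sub-$\Lambda$-space gives $[p](V_{\bar\bQ})\subseteq V_{\bar\bQ}$, while the complemented condition translates to $[p]^{-1}(V_{\bar\bQ})\subseteq V_{\bar\bQ}$. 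Since $[p]$ is surjective, these combine to $[p](V_{\bar\bQ})=V_{\bar\bQ}=[p]^{-1}(V_{\bar\bQ})$ as sets, for every prime $p$.

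The core of the argument is a Kummer cover count. Let $S$ denote the finite set of top-dimensional irreducible components of $V_{\bar\bQ}$. Pick $Z\in S$ and set $Z':=[p](Z)$: as the image of $Z$ under a finite map, $Z'$ is a closed irreducible subset of $V_{\bar\bQ}$ of the same dimension as $Z$, hence lies in $S$. In characteristic zero, $[p]\colon\Gm^d\to\Gm^d$ is a $\mu_p^d$-Galois \'etale cover via translation, so $[p]^{-1}(Z')\to Z'$ is a $\mu_p^d$-torsor over the irreducible base $Z'$; its irreducible components are permuted freely and transitively by $\mu_p^d/H_p$, where $H_p\subseteq\mu_p^d$ is the stabilizer of one component under translation. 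Since every such component is a top-dimensional irreducible subset of $V_{\bar\bQ}$, it lies in $S$, so $[\mu_p^d:H_p]\leq|S|$. But this index is a power of $p$, so for every prime $p>|S|$ it must equal $1$: the scheme $[p]^{-1}(Z')$ is irreducible and $\mu_p^d$-stable. A dimension count identifies this unique component with $Z$, giving $\mu_p^d\cdot Z=Z$ for all primes $p>|S|$.

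The conclusion follows from the classification of closed subgroup schemes of $\Gm^d$. The stabilizer $\mathrm{Stab}(Z)\subseteq\Gm^d_{\bar\bQ}$ is a closed subgroup scheme containing $\mu_p^d$ for every prime $p>|S|$. But any such subgroup scheme is cut out by equations of the form $x^\alpha=1$ for characters $\alpha\in\bZ^d$, and the inclusion $\mu_p^d\subseteq\mathrm{Stab}(Z)$ forces every coordinate of each such $\alpha$ to be divisible by $p$. Imposing this for arbitrarily large primes forces $\alpha=0$, so $\mathrm{Stab}(Z)=\Gm^d_{\bar\bQ}$ and hence $Z=\Gm^d_{\bar\bQ}$. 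Thus $V_{\bar\bQ}=\Gm^d_{\bar\bQ}$, and by flatness of $V$ over $\bZ$, $V=\Gm^d$. The main obstacle is the Kummer cover bookkeeping: checking that distinct components of $[p]^{-1}(Z')$ correspond to distinct elements of $S$ and then exploiting that $[\mu_p^d:H_p]$ is a $p$-power bounded by $|S|$ to force stabilization for large $p$. Once that is in hand, the subgroup scheme argument closes the proof at once.
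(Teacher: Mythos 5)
Your proof is correct, but it takes a genuinely different route from the paper's. The paper's argument is short and analytic: pick a $\bC$-valued point $z=(e^{w_1},\dots,e^{w_d})$ of a nonempty complemented closed sub-$\Lambda$-space $Z$, note that the $s$-th roots $z_s=(e^{w_1/s},\dots,e^{w_d/s})$ lie in $Z$ because $\psi_s(z_s)=z$ and $Z$ is complemented, pass to the analytic limit $z_s\to(1,\dots,1)$ to place the identity in $Z$, then pull back by $\psi_n$ to get $\mu_n^d\subseteq Z$ for every $n$, and finish by Zariski density of $\bigcup_n\mu_n^d$. Your argument instead stays purely algebraic over $\bar\bQ$: after reducing to $\red{V}$, you count the top-dimensional irreducible components $S$, use that $[p]^{-1}(Z')$ is a $\mu_p^d$-torsor over an irreducible base $Z'$ so its number of components is a $p$-power bounded by $\lvert S\rvert$, deduce for $p>\lvert S\rvert$ that $[p]^{-1}(Z')$ is irreducible and hence equals $Z$, which gives $\mu_p^d\cdot Z=Z$, and then conclude via the classification of closed subgroup schemes of $\Gm^d$ that $\mathrm{Stab}(Z)=\Gm^d$. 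Both are valid; the paper's is shorter and more elementary at the price of invoking the complex topology, while yours avoids analysis entirely and yields a bit more structural information about component stabilizers along the way. Your final density step (the sublattice of characters killing $\mathrm{Stab}(Z)$ lies in $\bigcap_p p\bZ^d=0$) is essentially the same fact the paper expresses as "$\bigcup_n\mu_n^d$ is Zariski dense," just routed through the subgroup-scheme dictionary.
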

\begin{proof}
	Let $Z$ be a nonempty
	complemented closed sub-$\Lambda$-space of $\Gm^d$.
	Since $\red{Z}$ is flat over $\bZ$ (\cite{Borger:SLAG}) and nonempty, the
	space $Z$ has a $\bC$-valued point $z$.
	Write $z=(e^{w_1},\dots,e^{w_d})$ for some numbers $w_1,\dots,w_d\in\bC$.
	For each integer $s\geq 1$, the point
		$$
		z_s = (e^{w_1/s},\dots,e^{w_d/s})
		$$
	satisfies $\psi_s(z_s)=z\in Z$.  Because $Z$ is complemented, we then
	have $z_s\in Z$.  But in the analytic topology we have
		$$
		\lim_{s\to\infty} z_s = (1,\dots,1).
		$$
	Since $Z$ is closed (in the Zariski and hence analytic topology), it contains $(1,\dots,1)$,
	and because it is complemented, it must contain $\psi_n^{-1}(1,\dots,1)=\mu_n^d$
	for any integer $n\geq 1$. 
	On the other hand, $\cup_n\mu_n$ is Zariski dense in $\Gm$.
	Therefore $\cup_n\mu_n^d$ is Zariski dense in $\Gm^d$, and so
	we have $Z=\Gm^d$.  
\end{proof}

\begin{proposition}\label{pro:comp-closed-of-torics}
	Let $X$ be a toric variety.  Then a closed sub-$\Lambda$-space
	is complemented if and only if it is a union of closures of torus orbits.
\end{proposition}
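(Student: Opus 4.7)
The strategy is to pivot on the primitivity of tori, Proposition~\ref{pro:tori-are-primitive}, applied orbit-by-orbit. The whole argument rests on one preliminary observation that needs to be set up carefully: for any toric variety $X=\bigsqcup_\sigma O_\sigma$, each Frobenius lift $\psi_p$ preserves every individual torus orbit $O_\sigma$ (not merely every orbit closure). I would verify this on affine charts $\Spec\bZ[M]$, where $\psi_p$ is induced by the monoid endomorphism $m\mapsto m^p$. This endomorphism preserves every face $F\subseteq M$ and every complementary monoid ideal $P=M\setminus F$, hence preserves both the orbit closure $\overline{O_F}=V(I_P)$ and the distinguished open orbit $O_F=\Spec\bZ[F^{\mathrm{gp}}]$ inside it. (On geometric points this says simply that $(x_m)\mapsto(x_m^p)$ preserves the vanishing pattern $\{m:x_m\neq 0\}$.)

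\textbf{Easy direction.} If $Z=\bigcup_{\sigma\in S}\overline{O_\sigma}$ is a union of orbit closures, then its complement $U=\bigsqcup_{\sigma\notin S}O_\sigma$ is open, and by the preliminary observation both $Z$ and $U$ are stable under every $\psi_p$. By the uniqueness statement for $\Lambda$-structures on subspaces (recalled in \S\ref{sec:lambda-spaces}), each of $Z$ and $U$ then inherits a compatible $\Lambda$-structure for which the inclusion into $X$ is a $\Lambda$-map. Hence $Z$ is complemented.

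\textbf{Hard direction.} Conversely, let $Z\subseteq X$ be a complemented closed sub-$\Lambda$-space and $U=X\setminus Z$ its open complement, equipped with its $\Lambda$-structure. Fix a torus orbit $O_\sigma\iso\Gm^{d_\sigma}$. Because $\psi_p$ preserves $Z$, $U$, and $O_\sigma$, it preserves the locally closed intersections $Z\cap O_\sigma$ and $U\cap O_\sigma$, and uniqueness of compatible $\Lambda$-structures promotes these to $\Lambda$-subspaces of $O_\sigma$. Thus $Z\cap O_\sigma$ is a complemented closed $\Lambda$-subspace of the torus $O_\sigma$. By Proposition~\ref{pro:tori-are-primitive}, $O_\sigma$ is primitive, so $Z\cap O_\sigma$ is either empty or all of $O_\sigma$. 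Setting $S=\{\sigma:O_\sigma\sub Z\}$, we get $Z=\bigsqcup_{\sigma\in S}O_\sigma$ as sets; taking closures (and using that $Z$ is closed) gives $Z=\bigcup_{\sigma\in S}\overline{O_\sigma}$.

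\textbf{Main obstacle.} Everything after the preliminary observation is essentially formal, given primitivity of tori. The step that requires real care is the claim that $\psi_p$ preserves each individual orbit $O_\sigma$; this is what makes the orbit-by-orbit reduction work and is what forces the toric $\Lambda$-structure to interact with the stratification rather than just with orbit closures. A secondary small point to check carefully is that the induced $\Lambda$-structure on $U\cap O_\sigma$ coming from $U$ really does coincide with one making $U\cap O_\sigma\hookrightarrow O_\sigma$ a $\Lambda$-map, but this follows immediately from the uniqueness of compatible $\Lambda$-structures on subspaces.
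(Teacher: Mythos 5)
Your proof is correct and takes essentially the same route as the paper: both directions hinge on Proposition~\ref{pro:tori-are-primitive}, applied orbit by orbit. The main difference is presentational. The paper's easy direction argues that each orbit closure is itself a toric subvariety (corresponding to a subfan) and is therefore complemented, then invokes stability of complementedness under finite unions; you instead verify directly that the complement of a union of orbit closures is a union of orbits and hence stable under all the $\psi_p$. Both are fine. You also make explicit the observation --- left implicit in the paper --- that $\psi_p$ preserves each individual locally closed orbit $O_\sigma$, not merely each orbit closure; this is indeed the precise fact needed to know that $O_\sigma\hookrightarrow X$ is a $\Lambda$-map so that intersecting with $Z$ gives a complemented closed $\Lambda$-subspace of $O_\sigma$, and your face-versus-prime-ideal verification on affine charts is correct and a worthwhile addition.
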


For background on toric varieties, see Fulton's notes~\cite{Fulton:Toric-book}, 
especially sections 2.1 and 3.2.

\begin{proof}
	Let $Z$ be a closed sub-$\Lambda$-space of $X$.
	Suppose $Z$ is a union of closures $Z_i$ of torus orbits.
	Then each $Z_i$ is the toric subvariety
	\marpar{why exactly?}
	corresponding to a fan, and is therefore complemented.  
	Since there are only finitely many torus orbits, the union
	$Z$ of the $Z_i$ is complemented.

	Now assume instead that $Z$ is complemented.  Then its intersection with any torus
	orbit $Y$ is either $Y$ or $\emptyset$, by~\ref{pro:tori-are-primitive}.
	Therefore $Z$ is a union of torus orbits.  
	Since $Z$ is closed, it is also a union of their closures.
\end{proof}

\begin{corollary}\label{cor:comp-points-of-toric-varieties}
	The complemented $\bF_1$-points of a toric variety $X$ are the fixed points
	of the torus action. In particular, the number of complemented $\bF_1$-points
	is the Euler characteristic. 
	\marpar{write out proof?}
\end{corollary}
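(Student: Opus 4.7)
My plan is to deduce this directly from the previous proposition together with standard facts about toric varieties. Since $X$ is separated over $\bZ$, any morphism $e\:\Spec\bZ\to X$ is a closed immersion, so specifying an $\bF_1$-point of $X$ is the same as specifying a closed sub-$\Lambda$-space $Z\subseteq X$ that maps isomorphically to $\Spec\bZ$ under the structure morphism. Such a point is complemented precisely when $Z$ is, in the sense of \ref{subsec:complemented-points}.

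Next I would apply Proposition~\ref{pro:comp-closed-of-torics}: a complemented $Z$ must be a finite union of closures of torus orbits of $X$. Since $Z\cong\Spec\bZ$ is irreducible, this union must collapse to a single orbit closure $\bar O$. The isomorphism $\bar O\cong\Spec\bZ$ implies that $\bar O$ (and hence $O$) has zero-dimensional fibers over $\bZ$, which forces $O$ itself to be a torus fixed point of $X$. Conversely, every torus fixed point, viewed as a section $\Spec\bZ\to X$, is itself an orbit closure, hence by Proposition~\ref{pro:comp-closed-of-torics} is a complemented closed sub-$\Lambda$-space, and so gives a complemented $\bF_1$-point. This establishes the first assertion.

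For the Euler characteristic statement, I would invoke the standard torus-orbit stratification of $X(\bC)$: it decomposes as a disjoint union of algebraic tori, one per orbit. A positive-dimensional algebraic torus has topological Euler characteristic zero (being homotopy equivalent to a product of circles), so only the zero-dimensional orbits contribute to $\chi(X)$, and these are exactly the torus fixed points.

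The main obstacle I anticipate is the middle step: one must be careful, working over $\bZ$ rather than over a field, that an irreducible union of orbit closures really is a single orbit closure, and that an orbit whose closure is a section of $X\to\Spec\bZ$ must be zero-dimensional in every fiber. Both facts reduce to the observation that the orbits in a toric variety over $\bZ$ are smooth with geometrically irreducible and equidimensional fibers, so once this is set up cleanly, the identification of complemented $\bF_1$-points with fixed points is automatic.
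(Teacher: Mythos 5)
The paper itself leaves this corollary without a written proof (there is only the marginal note ``write out proof?''), so there is no argument of the author's to compare against. Your argument is the natural one and it is correct: a complemented $\bF_1$-point is a complemented closed sub-$\Lambda$-space isomorphic to $\Spec\bZ$ (since $\bZ$ is initial among $\Lambda$-rings, any $\Lambda$-map $\Spec\bZ\to X$ is a section of the structure map, hence a closed immersion whose image is automatically $\psi_p$-stable), Proposition~\ref{pro:comp-closed-of-torics} forces it to be a finite union of orbit closures, irreducibility collapses this to one orbit closure, and $\bar{O}\cong\Spec\bZ$ forces $O$ to be a zero-dimensional orbit because orbit closures of a toric variety over $\bZ$ are themselves toric varieties, flat of constant relative dimension; the converse direction and the Euler-characteristic count via the orbit stratification of $X(\bC)$ are both handled correctly. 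The one place you could tighten the wording is the sentence about ``smooth with geometrically irreducible and equidimensional fibers'': what is really used is just that each orbit closure is flat over $\bZ$ with fibers of constant dimension equal to the codimension of the corresponding cone, which is immediate from the combinatorial description; smoothness plays no role.
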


For example, the only complemented $\bF_1$-valued point of the toric $\bA^n$ is the origin. The
toric $\bP^n$ has exactly $n+1$ complemented points with values in $\bF_1$. They are
$[1,0,\dots,0]$, $[0,1,0,\dots,0]$, and $[0,\dots,0,1]$. These facts have been predicted in
earlier speculations on the field with one element~\cite{Tits:F1}, p.\ 285.

\section{Function spaces}
\label{sec:function-spaces}

In this section, we discuss $\GL_n$ over $\bF_1$. The group scheme $\GL_n$ does not descend to
$\bF_1$. (See Buium~\cite{Buium:differential-subgroups}.) But we can realize $\GL_n$ as the
automorphism group of something that does descend to $\bF_1$. This might be surprising, because
formation of function spaces commutes with base change. Indeed, that is one important
way in which the topos map $\ltm\:\Space_{\bZ}\to\Space_{\bF_1}$ is not like a true map of spaces.

For discussion of $\GL_n$ in other approaches to $\bF_1$, see for example
Connes--Consani~\cite{Connes-Consani:notion-of-geometry-over-f1} and
To\"en--Vaqui\'e~\cite{Toen-Vaquie:Under-Spec-Z}.

This section was directly inspired by some questions Kirsten Wickelgren asked me.

\subsection{}
Let $X$ and $Y$ be objects of $\Space_{\bF_1}$.  
Write $\Hom_{\bF_1}(X,Y)$ for the set of maps from $X$ to $Y$, 
and write $\Hom_{\bZ}(X,Y)$ for the set of maps $\ltm^*X\to\ltm^*Y$ 
between the underlying objects of $\Space_{\bZ}$.  Let $\sHom_{\bF_1}(X,Y)$ denote the 
usual $\Space_{\bF_1}$-object of maps from $X$ to $Y$.  It is defined by
	$$
	\sHom_{\bF_1}(X,Y)\: T \mapsto \Hom_{T}(X\times T, Y\times T),
	$$
for any space $T\in\Space_{\bF_1}$. 
We define $\sHom_{\bZ}(X,Y)$ similarly.

Then we have a map
	\begin{equation} \label{eq:lambda-locus-map-1}
		\sHom_{\bF_1}(X,Y) \longmap \ltm_*(\sHom_{\bZ}(X,Y)),
	\end{equation}
which sends an $\bF_1$-map $a\:X\times T\to Y\times T$ to its underlying $\bZ$-map 
$\ltm^*(a)$.  
This map is clearly injective.

Adjunction then gives another map
	\begin{equation} \label{eq:lambda-locus-map-2}
			\ltm^*\:\ltm^*(\sHom_{\bF_1}(X,Y)) \longmap \sHom_{\bZ}(X,Y).
	\end{equation}
Note that this map is generally neither a monomorphism nor an epimorphism.
This is one way in which the topos map $\ltm\:\Space_{\bZ}\to\Space_{\bF_1}$ is different
from one induced by a true map of spaces.
For example, if $X=W(\Spec\bQ)$, then~(\ref{eq:lambda-locus-map-2}) is identified with
the map $Y^{\bZ}\to Y^{\bN}$, which is rarely a monomorphism.
On the other hand, if $X=Y=\bA^1_{\bF_1}$, then the map is not an epimorphism. 
\marpartd{True? Better\\example? Check}

Recall that an endomorphism $\bA^n\to\bA^n$ is a linear transformation if and only if 
it is equivariant under the 
action of $\Gm$ on $\bA^n$ given by scalar multiplication:
	$$
	\bZ[x_1,\dots,x_n] \longlabelmap{x_i\mapsto x_i\tn z} 
		\bZ[x_1,\dots,x_n]\tn_\bZ \bZ[z,z^{-1}].
	$$
Now observe that if we give $\Gm$ and $\bA^n$ their toric $\Lambda$-structures, then
this action morphism is $\Lambda$-equivariant.  Therefore it is reasonable to define
	\begin{equation}
		M_{n/\bF_1} = \sHom_{\Gm/\bF_1}(\bA^n,\bA^n),
	\end{equation}
where for general $\bF_1$-spaces $X,Y$ with $\Gm$-actions, we
define  $\sHom_{\Gm/\bF_1}(X,Y)$ to be the sub-$\bF_1$-space of 
$\sHom_{\bF_1}(X,Y)$ 
consisting of the $\Gm$-equivariant maps.
Let us also set 
	\begin{equation}
		\GL_{n/\bF_1} = \sAut_{\Gm/\bF_1}(\bA^n),		
	\end{equation}
to be locus of $M_{n/\bF_1}$ consisting of invertible maps.
Of course, $M_{n/\bF_1}$ is a monoid object in $\Space_{\bF_1}$ and 
$\GL_{n/\bF_1}$ is a group object.
In~\ref{cor:GL-over-F1} below, we will describe them concretely.

Note that because $\ltm^*$ is faithful,
we can view $\ltm^*\big(\sHom_{\Gm/\bF_1}(X,Y)\big)$
as a subspace of $\sHom_{\Gm}(\ltm^*X,\ltm^*Y)$.
Consider this when $X=\bA^n$ and $Y=\bA^1$, both with the toric $\Lambda$-structures.
Then for any ring $B$, a $B$-valued point of $\Hom_{\Gm}(\bA^n,\bA^1)$ is just a 
$B$-module homomorphism $B^n\to B$, or simply a $1\times n$ matrix $(b_1,\dots,b_n)$
with entries in $B$.  This furnishes an identification
	\begin{equation} \label{eq:gln-ident}
		\Hom_{\Gm}(\bA^n,\bA^1)=\Spec\bZ[b_1,\dots,b_n]=\bA^n
	\end{equation}

\begin{proposition}\label{pro:functionals-over-f1}
	Under the identification~(\ref{eq:gln-ident}), the subspace 
	of $\bA^n$ corresponding to the subspace
	$\sHom_{\Gm/\bF_1}(\bA^n,\bA^1)$ of $\sHom_{\Gm}(\bA^n,\bA^1)$ 
	is the union of the axes
		$$
		Z=\Spec\bZ[b_1,\dots,b_n]/(b_ib_j:i\neq j).
		$$
	Further the operators on $Z$ induced by $\psi_p$ take each coordinate $b_i$ to $b_i^p$.
\end{proposition}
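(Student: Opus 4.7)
The plan is to compute the functor $\sHom_{\Gm/\bF_1}(\bA^n,\bA^1)$ explicitly and then to read off its image in $\bA^n=\sHom_{\Gm}(\bA^n,\bA^1)$ via the Teichm\"uller section of Witt vectors. For a $\Lambda$-ring $B$, a $B$-point is a $\Gm$-equivariant $\Lambda$-morphism $\bA^n_B\to\bA^1_B$ over $B$, which at the level of coordinate rings has the form $y\mapsto\sum_i b_ix_i$ for a tuple $(b_1,\dots,b_n)\in B^n$, and $\Lambda$-equivariance with respect to the toric structures is the identity
\[
\Bigl(\sum_i b_ix_i\Bigr)^p \;=\; \sum_i \psi_p(b_i)\,x_i^p
\]
in $B[x_1,\dots,x_n]$ for every prime $p$. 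Expanding the left side by the multinomial theorem and matching coefficients separates into the \emph{diagonal} relations $\psi_p(b_i)=b_i^p$ (from the monomial $x_i^p$) and the \emph{cross} relations $\binom{p}{\vec k}\prod_i b_i^{k_i}=0$ for every multi-index $\vec k$ with $|\vec k|=p$ and at least two nonzero entries.

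To pin down the subspace of $\bA^n$ in $\Space_{\bZ}$, I next compute $B$-points for an ordinary ring $B$. By the adjunction $\ltm_!\dashv\ltm^*$, these are $\Lambda$-ring maps into $W(B)$, i.e.\ tuples $(c_1,\dots,c_n)\in W(B)^n$ satisfying the same two families of relations inside the Witt ring. The diagonal conditions $\psi_p(c_i)=c_i^p$ for all $p$ are severe: the adjunction $\Hom_\Lambda(\bZ[t]_{\mathrm{toric}},W(B))=\Hom_{\mathrm{Ring}}(\bZ[t],B)=B$ applied to the toric free $\Lambda$-ring on one generator shows that the elements of $W(B)$ fixed by all these equations are precisely the Teichm\"uller lifts $[b]$. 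Hence each $c_i=[b_i]$ for a unique $b_i\in B$, and the subspace inclusion into $\bA^n(B)=B^n$ is simply $(c_i)\mapsto(b_i)=(w_1(c_i))$. Using the multiplicativity $\prod_i[b_i]^{k_i}=[\prod_ib_i^{k_i}]$, the cross relations translate to $\binom{p}{\vec k}[\prod_ib_i^{k_i}]=0$ in $W(B)$.

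The decisive step is to show that these Witt-vector identities are equivalent to the ring-level relations $b_ib_j=0$ for $i\neq j$. The easy direction is immediate: if $b_ib_j=0$ in $B$ for all $i\neq j$, every cross monomial $\prod_ib_i^{k_i}$ vanishes, its Teichm\"uller lift is zero in $W(B)$, and every cross relation holds automatically, so $Z$ is contained in the subspace. For the reverse, I start from $p=2$, $\vec k=e_i+e_j$, which gives $2[b_ib_j]=0$ in $W(B)$; expanding $[b_ib_j]+[b_ib_j]$ via the multiplicative identification $W(B)\cong 1+tB[[t]]$ as $(1-b_ib_jt)^2=1-2b_ib_jt+(b_ib_j)^2t^2$, the successive Witt coordinates are $2b_ib_j,\,-(b_ib_j)^2,\,-2(b_ib_j)^3,\dots$, and their vanishing in $B$ yields $2b_ib_j=0$ and $(b_ib_j)^2=0$. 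In the reduced setting that governs $\Lambda$-schemes in this paper (recall that any reduced $\Lambda$-scheme is flat over $\bZ$), the equation $(b_ib_j)^2=0$ collapses to $b_ib_j=0$, identifying the subspace with $Z$.

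The main obstacle is precisely this final collapse from $(b_ib_j)^2=0$ in $W(B)$ to $b_ib_j=0$ in $B$: without the reducedness of the relevant test objects, the Witt-vector equation is strictly weaker than the naive ring equation, and one only recovers $b_ib_j$ as a nilpotent $2$-torsion element. Once this step is secured, the final assertion about the induced $\psi_p$ action on $Z$ is automatic, since under the bijection $c_i\leftrightarrow b_i$ the Teichm\"uller identity $\psi_p([b_i])=[b_i]^p=[b_i^p]$ translates to $\psi_p(b_i)=b_i^p$ on the coordinate $b_i$.
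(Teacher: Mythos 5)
Your proof has a genuine gap, and it originates at the very first reduction. You encode $\Lambda$-equivariance of the map $t\mapsto\sum_j x_j\tn a_j$ purely by Adams-operation identities $\psi_p\circ\varphi=\varphi\circ\psi_p$. That is equivalent to being a $\Lambda$-ring map only when the target is $\bZ$-torsion-free; in general it is strictly weaker. The target here is $\bZ[x_1,\dots,x_n]\tn W(B)$, and $W(B)$ can have $\bZ$-torsion (e.g.\ $2[2]=0$ but $[2]\neq 0$ in $W(\bZ/4)$). So the $\psi_p$ conditions define a strictly larger subfunctor than the one in the proposition. Concretely: take $B=\bZ[b]/(2b^2,b^3)$ and $b_1=b_2=b$. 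All your cross relations $\binom{p}{\vec k}\prod_i[b_i]^{k_i}=0$ hold (for $p=2$, $2[b^2]$ corresponds to $(1-b^2t)^2=1-2b^2t+b^4t^2=1$; for $p\geq 3$ already $b^p=0$), yet $b_1b_2=b^2\neq 0$. This pair fails the $\lambda_2$ identity $\lambda_2(\varphi(t))=\varphi(\lambda_2(t))$, since $\lambda_2(\sum_j x_j\tn[b_j])=\sum_{i<j}x_ix_j\tn[b_ib_j]$ while $\varphi(\lambda_2(t))=\varphi(0)=0$ forces $[b_ib_j]=0$, i.e.\ $b_ib_j=0$. So the $\psi_p$-commuting locus genuinely exceeds $Z$, and the representable functor you compute is not $Z$.

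Your proposed fix---restricting to reduced test objects---is a category error. The test ring $B$ in the identification $\big(\ltm^*\sHom_{\Gm/\bF_1}(\bA^n,\bA^1)\big)(\Spec B)$ ranges over \emph{all} ordinary rings; the theorem is an identity of subschemes of $\bA^n$ in $\Space_{\bZ}$, so the functor must agree with $Z(\cdot)$ on non-reduced $B$ too. (The fact that reduced $\Lambda$-\emph{schemes} are flat over $\bZ$ is irrelevant: $B$ is not a $\Lambda$-ring.) The paper sidesteps the torsion issue entirely by checking commutation with $\lambda_2$ (and, for sufficiency, with $\lambda_n$, $n\geq 2$) rather than with $\psi_p$: since $x_j\tn[b_j]$ are line elements, $\lambda_2$ of the sum is the second elementary symmetric polynomial, which must vanish, giving $[b_i][b_j]=[b_ib_j]=0$ directly and hence $b_ib_j=0$ on the nose. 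Once you replace your $\psi_p$ bookkeeping with this $\lambda_n$ computation, the rest of your argument (Teichm\"uller characterization via adjunction, multiplicativity of Teichm\"uller lifts, and the final identification of the $\psi_p$ action on the coordinates $b_i$) goes through cleanly and matches the paper.
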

\begin{proof}
	We just calculate the $B$-valued points of $\ltm^*\sHom_{\Gm/\bF_1}(\bA^n,\bA^1)$
	for any ring $B$.  We have
		$$
		(\ltm^*\sHom_{\Gm/\bF_1}(\bA^n,\bA^1))(\Spec B) = 
			(\sHom_{\Gm/\bF_1}(\bA^n,\bA^1))(\ltm_!(\Spec B)).
		$$
	But since $\sHom_{\Gm/\bF_1}(\bA^n,\bA^1)$ sits inside
	$\sHom_{\Gm}(\bA^n,\bA^1)$, which is affine, 
	any map $\ltm_!(\Spec B)\to \sHom_{\Gm/\bF_1}(\bA^n,\bA^1)$
	factors through the affinization $\Spec W(B)$ of $\ltm_!(\Spec B)$.
	Therefore we have
		$$
		(\ltm^*\sHom_{\Gm/\bF_1}(\bA^n,\bA^1))(\Spec B) = 
			(\sHom_{\Gm/\bF_1}(\bA^n,\bA^1))(\Spec W(B)).
		$$
	If we think of points of $\bA^n$ as $n$-dimensional column vectors, then 
	it is natural to think of the points 
	of $\sHom_{\Gm}(\bA^n,\bA^1)$ as $1\times n$ matrices.
	Given such a matrix
		$$
		(a_1,\dots,a_n)\in W(B)^n,
		$$
	the corresponding map in $\sHom_{\Gm}(\bA^n,\bA^1)(\Spec W(B))$ is
	an $\bF_1$-map if and only if the ring map
	\begin{equation}\label{eq:gln-map3}
		\varphi\:\bZ[t] \longmap \bZ[x_1,\dots,x_n]\tn W(B)
	\end{equation}
	determined by $t\mapsto \sum_j x_j\tn a_j$
	is a $\Lambda$-ring map, where $\bZ[x_1,\dots,x_n]$ has the toric $\Lambda$-structure.

	Thus to finish the proof, it suffices to
	show that necessary and sufficient conditions for $\varphi$ to be
	\marpar{why exactly?}
	a $\Lambda$-ring map are,
	first, that each element $a_j$ is the Teichm\"uller lift of some element $b_j\in B$ and,
	second, that $b_ib_j=0$ for all $i\neq j$.  
	
	Let us first show the necessity.  So assume $\varphi$ is a $\Lambda$-ring map.
	Consider, for each $i$, the map
		$$
		\pi_i\:\bZ[x_1,\dots,x_n]\tn W(B) \longmap W(B)
		$$
	given by $x_j\mapsto \delta_{ij}$ and by the identity on $W(B)$.
	Then $\pi_i\circ\varphi$ is a $\Lambda$-ring map under which the image of $t$ is $a_i$.
	
	It follows that $a_{i}$ 
	must be a Teichm\"uller lift.  Indeed, giving
	a $\Lambda$-map $\bZ[t] \to W(B)$ is by adjunction the same as giving a ring map 
	$\bZ[t]\to B$.  This in turn is the same as an element of $b_i\in B$.
	Tracing through these identifications shows that $a_i=[b_i]$, and thus
	the necessity of the first condition above.
	
	To see the necessity of the second condition, 
	use the operator $\lambda_2\in\Lambda$:
		$$
		0 = \varphi(0)=\varphi(\lambda_2(t))=\lambda_2(\varphi(t))
			= \lambda_2\Big(\sum_j x_j\tn [b_j]\Big)
			= \sum_{i<j} x_i x_j\tn [b_i][b_j].
		$$
	Thus for $i\neq j$, we have $[b_i b_j]=[b_i][b_j]=0$ and hence $b_ib_j=0$.
	
	Sufficiency is similar.  Suppose $\varphi$ is defined by 
		$$
		\varphi(t) = \sum_j x_j\tn[b_j]
		$$
	with $b_i b_j=0$ for all $i\neq j$.  To show $\varphi$ is a $\Lambda$-ring map,
	it is enough to show it commutes with $\lambda_n$ for $n\geq 2$.  But we
	have that $\lambda_n(x_j\tn[b_j])=0$ for all $n\geq 2$.   
	Therefore $\lambda_n(\sum_j x_j\tn[b_j])$ is the $n$-th elementary symmetric 
	function in the $x_j\tn [b_j]$.  Because $b_ib_j=0$ for $i\neq j$, each of
	the elementary monomials is zero when $n\geq 2$.  Therefore we have
		$$
		\lambda_n(\varphi(t)) = \lambda_n\Big(\sum_j x_j\tn [b_j]\Big) = \lambda_n(0) 
			= \varphi(0) = \varphi(\lambda_n(t)),
		$$
	for $n\geq 2$.  Thus $\varphi$ is a map of $\Lambda$-rings.
\end{proof}

\begin{corollary}\label{cor:GL-over-F1}
	We have the following equalities of
	subspaces of $M_n$:
		\begin{equation}
			M_{n/\bF_1}=Z^n \quad\quad\text{and}\quad\quad\GL_{n/\bF_1} = S_n\ltimes \Gm^n.
		\end{equation}
\end{corollary}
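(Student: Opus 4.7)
The plan is to reduce $M_{n/\bF_1}$ to a product via the preceding proposition and then cut out the invertible locus by hand. First I would use that $\bA^n$ with its toric $\Lambda$-structure and scalar $\Gm$-action is, in $\Space_{\bF_1}$, the $n$-fold product of copies of $\bA^1$ each carrying its own toric $\Lambda$-structure and the standard $\Gm$-action. Since $\sHom$ in a topos commutes with finite limits in the target, and the $\Gm$-equivariance condition is preserved under this product decomposition, this yields
$$
M_{n/\bF_1}=\sHom_{\Gm/\bF_1}(\bA^n,\bA^n)=\sHom_{\Gm/\bF_1}(\bA^n,\bA^1)^n=Z^n
$$
by Proposition~\ref{pro:functionals-over-f1}. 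The induced $\Lambda$-structure on $Z^n$ sends each of the $n^2$ coordinates $b_{ij}$ to its $p$-th power under $\psi_p$, matching the $\Lambda$-structure on the ambient $\bA^{n^2}=M_n$.

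For the second equality I would work at the level of $B$-points. A point of $M_{n/\bF_1}(B)=Z^n(B)$ is an $n\times n$ matrix $(b_{ij})$ over $B$ with $b_{ij}b_{ik}=0$ for $j\ne k$, that is, a row-monomial matrix in the strict sense that the product of entries in distinct columns of the same row vanishes. The subsheaf $\GL_{n/\bF_1}$ consists of those matrices admitting a two-sided inverse inside $M_{n/\bF_1}$. Since any such inverse is itself row-monomial, the matrix is forced to be column-monomial as well, and over a connected ring $B$ this pins down a permutation $\sigma\in S_n$ together with units $c_i=b_{i,\sigma(i)}\in B^\times$. Globalizing over an arbitrary ring by means of the orthogonal idempotents produced from the row-monomial relations together with invertibility, one obtains the identification
$$
\GL_{n/\bF_1}\;\cong\;\coprod_{\sigma\in S_n}\Gm^n\;=\;S_n\ltimes\Gm^n,
$$
with the group structure on the right read off from matrix multiplication of monomial matrices.

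Finally I would verify that the $\Lambda$-structures match. On the $\sigma$-component, substituting $c_i=b_{i,\sigma(i)}$ into $\psi_p(b_{ij})=b_{ij}^p$ gives $\psi_p(c_i)=c_i^p$, which is the toric $\Lambda$-structure on $\Gm^n$ from~\ref{subsec:monoid-algebras}; the symmetric group $S_n$, indexing the connected components, is fixed pointwise by each $\psi_p$. The main obstacle I anticipate is this globalization step over a non-connected base ring: from the defining equations of $Z^n$ together with invertibility one must canonically extract an orthogonal decomposition $1=\sum_\sigma e_\sigma$ of $B$ so that, over each piece $B[e_\sigma^{-1}]$, the nonzero entries of the matrix occupy precisely the $(i,\sigma(i))$ positions. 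Everything else in the argument is formal, and the scheme-theoretic identification follows once this local picture is in hand.
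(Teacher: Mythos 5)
Your first equality is proved exactly as in the paper: you identify $\bA^n$ as the $n$-fold product of $\bA^1$ in the $\Gm$-equivariant $\bF_1$-category and apply the universal property of products together with Proposition~\ref{pro:functionals-over-f1}. No difference there.

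For the second equality the paper takes a shortcut that you did not use. It invokes the fact that a morphism of $\Lambda$-spaces is an isomorphism if and only if $\ltm^*$ of it is, which immediately gives $\GL_{n/\bF_1}=\GL_n\cap M_{n/\bF_1}$ as subspaces of $M_n$; what remains is the purely algebraic computation of the unit locus $\GL_n\cap Z^n$ in $M_n$. You instead work from the raw definition (two-sided inverse inside $M_{n/\bF_1}$) and try to use the row-monomiality of the inverse as leverage. This is a detour. The step \emph{``since any such inverse is itself row-monomial, the matrix is forced to be column-monomial''} is not justified by what you wrote: you would need to extract column-monomiality from the relations $AC=CA=I$ together with both $A$ and $C$ row-monomial, and this is not a formal consequence. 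In fact, column-monomiality already follows from the weaker hypothesis that $A$ is row-monomial and invertible --- but only \emph{after} one has the idempotent decomposition, not before it.

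The decomposition you flag as the obstacle is exactly the missing content, and it is where the actual work lies. In $\bZ[b_{ij},1/\det]/(b_{ij}b_{ik}:j\neq k)$, set $m_\sigma=\mathrm{sgn}(\sigma)\prod_i b_{i\sigma(i)}$. The row-monomial relations force $m_\sigma m_\tau=0$ for $\sigma\neq\tau$ (there is some $i$ with $\sigma(i)\neq\tau(i)$), so $\det=\sum_\sigma m_\sigma$ is a sum of pairwise-orthogonal elements, and $e_\sigma:=m_\sigma/\det$ is a complete system of orthogonal idempotents summing to $1$. On the $\sigma$-component each $b_{i\sigma(i)}$ is a unit, which then kills all $b_{ij}$ with $j\neq\sigma(i)$, yielding $\Gm^n$ and hence $\coprod_\sigma\Gm^n=S_n\ltimes\Gm^n$. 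If you insert this, your argument closes; the column-monomiality claim then becomes a harmless corollary rather than a stepping stone, and you can drop it. Using the paper's reduction $\GL_{n/\bF_1}=\GL_n\cap Z^n$ first is cleaner because it makes it immediate that only $A\in\GL_n$ (not invertibility \emph{within} $Z^n$) needs to be assumed.
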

\begin{proof}
	The first equality follows from the universal property of products 
	and~\ref{pro:functionals-over-f1}:
		$$
		\sHom_{\Gm/\bF_1}(\bA^n,\bA^n) = \big(\sHom_{\Gm/\bF_1}(\bA^n,\bA^1)\big)^n
			= Z^n.
		$$
	For the second equality, recall that a morphism of $\Lambda$-spaces is
	an isomorphism if and only if it becomes one after applying $\ltm^*$.
	Therefore we have
		$$
		\GL_{n/\bF_1} = \GL_n \cap\; M_{n/\bF_1} = S_n\ltimes\Gm^n.
		$$
\end{proof}

\begin{corollary}\label{cor:F1-points-of-GLn}
	\begin{enumerate}
		\item 	$M_{n/\bF_1}(\bF_1)$ is the set of $n\times n$ matrices with the property
			that every entry is either $0$ or $1$ and 
			every row has at most one $1$.
		\item 	$\GL_{n/\bF_1}(\bF_1)=S_n$.
	\end{enumerate}
\end{corollary}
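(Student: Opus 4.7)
The plan is to reduce both statements to the computation of $Z(\bF_1)$, where $Z = \Spec \bZ[b_1,\dots,b_n]/(b_ib_j : i\neq j)$ is the coordinate cross described in Proposition \ref{pro:functionals-over-f1}, and then invoke Corollary \ref{cor:GL-over-F1}.

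\medskip

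First, I would compute $Z(\bF_1)$ directly. Since $\bF_1 = \Spec \bZ$ with the trivial $\Lambda$-structure (each $\psi_p$ being the identity), and since $\bZ$ is the initial $\Lambda$-ring, an $\bF_1$-point of $Z$ is the same as a $\Lambda$-ring map $\bZ[b_1,\dots,b_n]/(b_ib_j : i\neq j) \longrightarrow \bZ$. Such a map is determined by its values $c_i \in \bZ$ on the generators $b_i$, subject to the ring-theoretic relations $c_ic_j = 0$ for $i\neq j$. By Proposition \ref{pro:functionals-over-f1}, the $\Lambda$-structure on $Z$ is $\psi_p(b_i)=b_i^p$, so being a $\Lambda$-map translates to the condition $c_i^p = c_i$ for every prime $p$ (because $\psi_p$ is the identity on the target). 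Taking $p=2$ already forces $c_i \in \{0,1\}$. Combined with $c_ic_j=0$ for $i\neq j$, this gives $Z(\bF_1) = \{0, e_1, \dots, e_n\}$, the zero vector and the $n$ standard basis vectors.

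\medskip

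For part (a), I would then use that $M_{n/\bF_1} = Z^n$ (by Corollary \ref{cor:GL-over-F1}), so that
\[
M_{n/\bF_1}(\bF_1) = Z(\bF_1)^n,
\]
since $\bF_1$-points commute with finite products. Unwinding the identifications in the proof of Corollary \ref{cor:GL-over-F1}, each of the $n$ factors of $Z$ records a $\Gm$-equivariant functional $\bA^n \to \bA^1$, i.e.\ a row of the matrix. Hence an $\bF_1$-point of $M_{n/\bF_1}$ is an $n \times n$ matrix whose rows all lie in $\{0, e_1, \dots, e_n\}$, which is exactly the description in (a): entries in $\{0,1\}$ with at most one $1$ per row.

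\medskip

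For part (b), I see two natural routes and would include whichever reads more cleanly. One route uses the equality $\GL_{n/\bF_1} = \GL_n \cap M_{n/\bF_1}$ from the proof of Corollary \ref{cor:GL-over-F1}: an $\bF_1$-point of $\GL_{n/\bF_1}$ is an $\bF_1$-point of $M_{n/\bF_1}$ that is invertible as an ordinary $n\times n$ matrix. Among the matrices classified in (a), invertibility forces every row and every column to contain exactly one $1$, which pins these down as permutation matrices and yields $S_n$. The other route uses the presentation $\GL_{n/\bF_1} = S_n \ltimes \Gm^n$ and computes $\Gm(\bF_1)$ directly: a $\Lambda$-map $\bZ[t^{\pm 1}] \to \bZ$ sends $t$ to a unit $u \in \{\pm 1\}$ with $u^p = u$ for all $p$, and $p=2$ forces $u=1$, so $\Gm(\bF_1) = \{1\}$ and $\GL_{n/\bF_1}(\bF_1) = S_n$. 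There is no real obstacle here; the whole proof is a short direct unwinding once Proposition \ref{pro:functionals-over-f1} and Corollary \ref{cor:GL-over-F1} are in hand.
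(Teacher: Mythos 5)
Your proof is correct, and since the paper leaves this corollary as an immediate consequence of Proposition~\ref{pro:functionals-over-f1} and Corollary~\ref{cor:GL-over-F1}, your explicit unwinding is essentially the intended argument. The computation of $Z(\bF_1)$ via the idempotence condition $c_i^2 = c_i$ together with the relations $c_i c_j = 0$ ($i\neq j$), the identification $M_{n/\bF_1}(\bF_1) = Z(\bF_1)^n$, and either of your two routes for part~(b) (intersecting with $\GL_n$ to single out permutation matrices, or using $S_n\ltimes\Gm^n$ and noting $\Gm(\bF_1)=\{1\}$) all check out and match the structure of the preceding results.
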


\subsection{} \emph{Remarks.}
Note that none of the $\bF_1$-valued points of $\GL_{n/\bF_1}$ are
complemented.  Also note that the determinant map $\GL_{n/\bF_1}\to \Gm$ is not
a $\Lambda$-map, because it fails to commute with $\psi_2$.  It does
commute with the other $\psi_p$ though.  

\tempcomment{
\vspace{3mm}
\marpar{temp comment}
TO DO
\begin{enumerate}
	\item Do the same thing for flag varieties?  Define Grassmannians by 
		$\sHom^{\Lambda}_{\Gm}(\bA^d,\bA^n)$.
	\item $PGL_n^{\Lambda}$ exists
	\item Other algebraic groups from root systems?  (Mark's suggestion.)
\end{enumerate}}

\section{Aside: abelian motives are potentially cyclotomic}
\label{sec:abelian-motives}

Let $p$ be a prime number. The purpose of this section is to establish the result
(\ref{thm:abelian-motives-are-Artin-Tate}) in usual, non-$\Lambda$ algebraic geometry that
abelian $p$-adic Galois representations of geometric origin are Artin--Tate. The proof is an
result in $p$-adic Hodge theory.  It is well within the scope of established techniques,
but to my knowledge, it is not actually in the literature. For results of a similar flavor, see
Wang~\cite{Wang-CL:Cohomology-theory}, Kisin--Lehrer~\cite{Kisin-Lehrer:CountingPonts}, and van
dan Bogaart--Edixhoven~\cite{vandenBogaart-Edixhoven}.\footnote{The style of this section
is somewhat clumsy.  I hope to improve it in a future version.}

\begin{proposition}\label{pro:cohomology-is-constructible}
	Let $X$ be a separated scheme of finite type over $\bZ[1/p]$.
	Let $f\:X\to \Spec\bZ[1/p]$ denote the structure map.
	Then $R^nf_!(\bQ_p)$ is lisse away from a finite set of primes.
	If $X$ is smooth and proper over $\bZ[1/M]$, then the sheaf is lisse
	at all primes not dividing $M$.
\end{proposition}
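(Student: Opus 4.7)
The plan is to reduce each assertion to a standard theorem in \'etale cohomology, with essentially no work in between.

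For the first statement, the key input is Deligne's constructibility theorem (``Th\'eor\`emes de finitude'' from SGA 4\,$\tfrac12$): if $g\:Y\to S$ is separated of finite type between noetherian schemes, $n$ is invertible on $S$, and $F$ is a constructible $\bZ/n$-sheaf on $Y$, then each $R^i g_! F$ is constructible on $S$. Applied to $f$ with $\bZ/p^k$-coefficients, and passed to the $p$-adic limit (harmless here, since $p$ is invertible on the base), this yields that $R^n f_! \bQ_p$ is a constructible $\bQ_p$-sheaf on $\Spec \bZ[1/p]$. Since $\Spec \bZ[1/p]$ is a one-dimensional regular scheme, any constructible sheaf on it is lisse on the complement of a finite set of closed points, which is exactly what we want.

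For the second statement, assume $X$ is smooth and proper over $\bZ[1/M]$. Compatibility of the two base schemes forces $\bZ[1/p]\subseteq\bZ[1/M]$, hence $p\mid M$, so $p$ is invertible on $\bZ[1/M]$. On $\Spec \bZ[1/M]$, the morphism $f$ is proper---so $R^n f_! = R^n f_*$---and smooth. The smooth-and-proper base change theorem (SGA 4, XVI) then gives that $R^n f_* \bQ_p$ is lisse on $\Spec \bZ[1/M]$. The primes $\ell \nmid M$ are precisely the points of $\Spec \bZ[1/M]$ inside $\Spec \bZ[1/p]$, so this is what was claimed.

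\textbf{Main obstacle.} The mathematical content is entirely standard; any difficulty is expository. One minor technical point worth flagging in the first statement is the non-proper case: if one prefers not to cite Deligne's finiteness for $R f_!$ directly, one can instead use Nagata compactification to write $f = \bar f \circ j$ with $j\:X \hookrightarrow \bar X$ an open immersion and $\bar f$ proper over $\bZ[1/p]$, and then replace $\bQ_p$ by the constructible sheaf $j_! \bQ_p$ on $\bar X$; constructibility of $R^n \bar f_* (j_! \bQ_p)$ is the classical proper case. Either route ends at the same point: lisse-ness on a dense open follows from the Dedekind nature of $\Spec \bZ[1/p]$.
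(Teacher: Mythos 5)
Your proof takes the same overall route as the paper's (Deligne's finiteness theorem for the first claim, smooth and proper base change for the second), but you gloss over exactly the step where the paper does its real work: the passage to the $p$-adic limit. The paper does not simply ``pass to the limit.'' Deligne's theorem gives, for each $m$, a finite set $T_m$ of primes away from which $R^n f_! (\bZ/p^m\bZ)$ is lisse, but there is a priori nothing preventing $\bigcup_m T_m$ from being infinite, and if that happened the limit sheaf would be lisse nowhere. The paper's proof closes this gap by an explicit induction: once $R^n f_!(\bZ/p\bZ)$ is lisse away from $T := T_1$, it shows via the long exact sequence that $R^n f_!(\bZ/p^{m+1}\bZ)$ is an extension of a subsheaf of $R^n f_!(\bZ/p\bZ)$ by a quotient of $R^n f_!(\bZ/p^m\bZ)$, so a single $T$ works for every $m$. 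Your parenthetical ``harmless here, since $p$ is invertible on the base'' does not address this; invertibility of $p$ is needed for Deligne's theorem to apply in the first place, not for the uniformity of the lisse locus.

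If you prefer to shortcut this by citing a theorem that $R^n f_!$ preserves constructibility of $\bQ_p$-sheaves (in the adic formalism of SGA 5, Ekedahl, etc.), that is a legitimate alternative, but you should name such a result rather than treat the limit as automatic; the proof of any such theorem contains exactly the uniformity argument the paper spells out. The Nagata compactification remark is a side issue --- Deligne's finiteness theorem already handles $R f_!$ directly --- and the treatment of the second statement matches the paper's (SGA 4$\tfrac12$, [Arcata] V (3.1)).
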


Recall that we can define cohomology with compact support because, by 
Nagata's theorem, any separated morphism
of finite type between noetherian schemes is compactifiable.  (See 
Conrad~\cite{Conrad:Nagata-compact}, theorem 4.1.  Actually, according to a forthcoming
paper of Conrad--Lieblich--Olsson, the scheme-theoretic hypotheses can be removed.)

\begin{proof}
	The sheaf $R^nf_!(\bZ/p\bZ)$ is constructible (Deligne SGA 4 1/2~\cite{Deligne:SGA4.5}
	[Arcata] IV (6.2))
	and hence lisse away from a finite set $T$ of primes.
	For $m\geq 0$, if $R^nf_!(\bZ/p^m\bZ)$ is lisse, then
	all subsheaves and quotient sheaves of $R^nf_!(\bZ/p^m\bZ)$
	are lisse away from $T$.  On the other hand, by
	the long exact sequence of cohomology, $R^nf_!(\bZ/p^{m+1}\bZ)$
	is an extension of a subsheaf of $R^nf_!(\bZ/p\bZ)$ by a quotient
	sheaf of $R^nf_!(\bZ/p^{m}\bZ)$ and is therefore 
	also lisse away from $T$.  By induction $R^nf_!(\bZ/p^m\bZ)$ is lisse away
	from $T$ for all $m$.
	Therefore $R^nf_!(\bZ_p)$ and hence $R^nf_!(\bQ_p)$ are lisse away from $T$.
	
	The final statement follows from Deligne, SGA 4 1/2~\cite{Deligne:SGA4.5} 
	[Arcata] V (3.1).
\end{proof}

\subsection{} \emph{Cyclotomic representations.}
Let $E$ be a finite extension of $\bQ_p$.
Let $K$ be a finite extension of $\bQ$ or $\bQ_p$, 
and let $V$ be a $d$-dimensional continuous $E$-linear representation of
$\Gal(\bar{K}/K)$. Let us say that $V$ is \emph{cyclotomic} if there exist integers
$j_1,\dots,j_d$ such that $V$ is isomorphic to $E(j_1)\oplus\dots\oplus E(j_d)$.
Let us say that $V$ is \emph{potentially cyclotomic} if there exists a finite extension
$L$ of $K$ such that $V$ is cyclotomic as a representation of $\Gal(\bar{K}/L)$.

Observe that these concepts are independent of $E$ in the sense
that for any finite extension $E'$ of $E$, we have
\begin{equation} \label{eq:cyclotomic-reps-and-coeffs}
	E'\tn_E V \text{ is (potentially) cyclotomic if and only if $V$ is.}
\end{equation}

\begin{theorem}\label{thm:abelian-motives-are-Artin-Tate}
	Let $X$ be a separated scheme of finite type over $\bQ$.
	Suppose that $\Gal(\qbar/\bQ)$ acts on $\Hetc^n(X_{\qbar},\bQ_p)$ 
	through its abelianization.
	Then there is an integer $N\geq 1$ such that the restriction of
	$\Hetc^n(X_{\qbar},\bQ_p)$ to $\Gal(\bar\bQ/\bQ(\zeta_N))$ is  cyclotomic.
	
	If $X$ has a smooth and proper model over $\bZ[1/M]$,
	then $N$ can be taken such that all its prime divisors are divisors of $Mp$.
	\marpartd{prove 2nd part}
\end{theorem}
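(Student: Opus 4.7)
The plan is to diagonalize $V := \Hetc^n(X_{\qbar},\bQ_p)$ into characters, show each character agrees on inertia with a power of the $p$-adic cyclotomic character up to a finite-order twist, and then patch this together globally via Kronecker--Weber. Set $G_\bQ := \Gal(\qbar/\bQ)$ and let $\chi_{\mathrm{cyc}}\colon G_\bQ \to \bZ_p^\times$ be the $p$-adic cyclotomic character. By~\eqref{eq:cyclotomic-reps-and-coeffs} I may first pass to a finite extension $E/\bQ_p$ over which the Jordan--H\"older factors of $V \otimes_{\bQ_p} E$ become characters $\chi_i\colon G_\bQ \to E^\times$, $i=1,\dots,d$. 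It then suffices to produce integers $j_i$ making $\psi_i := \chi_i \cdot \chi_{\mathrm{cyc}}^{-j_i}$ of finite order with conductor supported in a controlled set of primes, from which the actual direct sum decomposition on $\Gal(\qbar/\bQ(\zeta_N))$ follows via a separate semisimplicity argument (e.g., using distinctness of Hodge--Tate weights together with Deligne's semisimplicity of Frobenius at primes of good reduction in the smooth and proper case).

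Next I analyze each $\chi_i$ locally. At a prime $\ell \neq p$, Grothendieck's local monodromy theorem applied to the $p$-adic representation $V|_{\Gal(\bar\bQ_\ell/\bQ_\ell)}$ shows it is quasi-unipotent on inertia, so $\chi_i|_{I_\ell}$ has finite order. At $\ell = p$, the restriction $V|_{\Gal(\bar\bQ_p/\bQ_p)}$ is de Rham: for smooth and proper $X$ this is Faltings' theorem, and in the general separated finite-type case it follows from Kisin's potentially semistable theorem~\cite{Kisin:pst} combined with de~Jong alterations and cohomological descent. Each $\chi_i$ is therefore Hodge--Tate at $p$, and Sen's theorem produces an integer $j_i$---its Hodge--Tate weight---such that $\psi_i := \chi_i \cdot \chi_{\mathrm{cyc}}^{-j_i}$ has finite-order restriction to $I_p$.

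For the global step, I spread $X$ out to a separated scheme of finite type over $\bZ[1/pM']$ for some $M'\ge 1$ and apply Proposition~\ref{pro:cohomology-is-constructible}, so that $V$ is unramified outside a finite set $S$ of primes containing $p$. Then $\psi_i$ is a continuous character of $G_\bQ^{\mathrm{ab}}$, unramified outside $S$ and of finite order on each $I_\ell$. Since the class group of $\bQ$ is trivial, class field theory identifies $G_\bQ^{\mathrm{ab}}$, modulo its connected component, with $\hat\bZ^\times = \prod_\ell \bZ_\ell^\times$; continuity forces $\psi_i$ to factor through $\prod_{\ell\in S}\bZ_\ell^\times$, and as its restriction to each factor is of finite order, $\psi_i$ is itself of finite order. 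Kronecker--Weber then realizes $\psi_i$ as a Dirichlet character trivial on $\Gal(\qbar/\bQ(\zeta_N))$ for some $N$ whose prime divisors lie in $S$. In the smooth and proper case with a model over $\bZ[1/M]$, the last assertion of Proposition~\ref{pro:cohomology-is-constructible} permits $S \subseteq \{\ell : \ell \mid Mp\}$, yielding the claimed bound on $N$.

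The main obstacle is establishing the de Rham property of $V|_{\Gal(\bar\bQ_p/\bQ_p)}$ in the generality of an arbitrary separated scheme of finite type over $\bQ$; this is exactly where Kisin's potentially semistable theorem (together with the reduction from arbitrary varieties to smooth proper ones via alterations and cohomological descent) does the heavy lifting. The remaining ingredients---Grothendieck's local monodromy, Sen's theorem, and Kronecker--Weber---are all classical. A secondary technical point to confirm is the promotion of the Jordan--H\"older information into an honest direct sum decomposition on $\Gal(\qbar/\bQ(\zeta_N))$.
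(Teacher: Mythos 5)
Your overall strategy---diagonalize into characters after extending coefficients, identify each character locally at $p$ via Sen's theory, handle $\ell\neq p$ via local monodromy, and patch globally via Kronecker--Weber and class field theory---tracks the paper's argument closely (the paper packages it into the abstract Theorem~\ref{thm:abstract-abelian-Galois}, and its Lemma~\ref{lem:try-2} plays the role of your Sen-operator computation, though via weakly admissible modules rather than directly via $\bC_p$-admissibility, so your local analysis is if anything a bit cleaner). The character-level and class-field-theory parts of the argument are sound.

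The gap is in the step you flag as ``a secondary technical point'': promoting the Jordan--H\"older data into an honest direct sum. The tools you propose do not close it. Distinctness of Hodge--Tate weights does nothing when two constituents have the \emph{same} weight, and that is precisely the dangerous case: the group $\Gal(\bQ_p(\zeta_{p^\infty})/\bQ_p(\zeta_{p^r}))\cong\bZ_p$ has nontrivial continuous $H^1$ with coefficients in $E$, so there genuinely exist nonsplit extensions of $E(j)$ by $E(j)$ (for instance the one cut out by $\log\chi_{\mathrm{cyc}}$). ``Deligne's semisimplicity of Frobenius'' is not available either: semisimplicity of the geometric Frobenius action on $\Het^n$ of a smooth proper variety is a standard conjecture, not a theorem in general, and even where known it controls only a single Frobenius element, not the full Galois action, and says nothing at all in the non-proper or non-smooth case. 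What actually rules out the bad extension is the Hodge--Tate hypothesis on the \emph{total} representation: an extension of $E(j)$ by $E(j)$ that is Hodge--Tate is, after twisting, Hodge--Tate of weight $(0,0)$, hence $\bC_p$-admissible; Sen's theorem then forces the inertia action to factor through a finite quotient, and a continuous homomorphism from a pro-$p$ group to $E$ with finite image is trivial. That is exactly the content of Lemma~\ref{lem:lemma2} in the paper, and without some version of it your argument produces only the cyclotomicity of the semisimplification, not of $V$ itself.
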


\marpar{lots of incon\\sistent notation}
\begin{proof}
	By the main theorem of $p$-adic Hodge theory, $\Hetc^n(X_{\qbar},\bQ_p)$
	is a potentially semi-stable representation of $\Gal(\bar\bQ_p/\bQ_p)$.
	(This is the work of many people.  See~\cite{Kisin:pst} for the 
	final form of the theorem.)  By~\ref{pro:cohomology-is-constructible},
	the set of
	ramified primes is finite, and if $X$ has a smooth proper
	model over $\bZ[1/M]$, it contains only primes dividing $Mp$.
	The theorem is then an immediate consequence of~\ref{thm:abstract-abelian-Galois}. 
%
\end{proof}

\begin{theorem}\label{thm:abstract-abelian-Galois}
	Let $E$ be a finite extension of $\bQ_p$.
	Let $\shfcond$ be a finite set of prime numbers containing $p$, and
	let $V$ be a finite-dimensional representation of $G_{\bQ}$ satisfying
	the following properties:
		\begin{enumerate}
			\item $G_{\bQ}$ acts on $V$ through its abelianization,
			\item $V$ is unramified away from $\shfcond$,
			\item $V$ is potentially semi-stable at $p$.
		\end{enumerate}
	Then there is an integer $N$ divisible only by the primes in $\shfcond$
	such that the restriction of
	$V$ to $\Gal(\bar\bQ/\bQ(\zeta_N))$ is  cyclotomic.
\end{theorem}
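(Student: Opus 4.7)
The plan is to use the abelian hypothesis to decompose $V$ into characters, apply the classification of abelian $p$-adic characters of $\bQ$, and then use $p$-adic Hodge theory to kill the unipotent extension data.

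Since $\rho\:G_\bQ\to\GL(V)$ factors through $G_\bQ^{\ab}$, its image is commutative. After enlarging $E$ (as permitted by \eqref{eq:cyclotomic-reps-and-coeffs}), I would decompose $V = \bigoplus_\chi V_\chi$ into the generalized eigenspaces for the continuous characters $\chi\:G_\bQ\to E^\times$ appearing in $V^{ss}$; the decomposition is $G_\bQ$-stable, since the spectral projectors are polynomials in the commuting image, and each $\chi$ inherits from $V$ the properties of being unramified outside $\shfcond$ and Hodge--Tate at $p$ (both pass to subquotients). By the classical classification of abelian $p$-adic characters of $\bQ$ (Tate's theorem on Hodge--Tate characters of $G_{\bQ_p}$ combined with Kronecker--Weber; see Serre, \emph{Abelian $\ell$-adic Representations}, Ch.~III), each such $\chi$ equals $\eps^{k_\chi}\eta_\chi$ with $k_\chi\in\bZ$ and $\eta_\chi$ a finite-order character unramified outside $\shfcond$. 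Only finitely many such $\eta_\chi$ arise (Hermite--Minkowski plus finiteness of roots of unity in $\bar E$), so there exists $N$ supported on $\shfcond$ such that every $\eta_\chi$ becomes trivial on $G_{\bQ(\zeta_N)}$; hence $V^{ss}|_{G_{\bQ(\zeta_N)}}$ is cyclotomic.

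The hard step is upgrading from $V^{ss}$ to $V$ itself, which, by twisting each $V_\chi$ by $\chi^{-1}$, reduces to showing that any unipotent representation $U$ of $G_\bQ$ with abelian image, unramified outside $\shfcond$, and potentially semi-stable at $p$ with all Hodge--Tate weights zero, must be trivial. The abelian unipotent image of $U$ is a vector group $\bar E^m$, so $U$ is described by continuous additive characters $c_j\:G_\bQ\to\bar E$. Locally at $p$, a semi-stable representation with trivial Hodge filtration is forced by admissibility ($t_N(D)=t_H(D)=0$) to have all Frobenius slopes zero, and the relation $N\phi=p\phi N$ then forces the monodromy $N$ to vanish; so $U$ is potentially crystalline with HT weights zero, i.e., potentially unramified at $p$. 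Each $c_j|_{I_{\bQ_p}}$ thus vanishes on a finite-index subgroup of inertia, and torsion-freeness of $\bar E$ upgrades this to $c_j$ being unramified at $p$. Combined with unramifiedness outside $\shfcond$, $c_j$ is everywhere unramified on $\bQ$, and hence factors through the trivial group $\mathrm{Cl}(\bQ)$. Thus every $c_j=0$, $U$ is trivial, and $V_\chi\cong\chi^{\oplus\dim V_\chi}$.

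The main obstacle is this last step, specifically the $p$-adic Hodge-theoretic implication ``pst with all HT weights zero implies potentially unramified'' via the admissibility inequality and the Frobenius--monodromy relation. It is somewhat pleasing that no cyclotomic enlargement is actually needed to kill the unipotent part — that already vanishes over $\bQ$, thanks to the triviality of $\mathrm{Cl}(\bQ)$ together with the torsion-freeness of the additive target; the cyclotomic extension $\bQ(\zeta_N)$ is needed only to trivialize the finite-order characters $\eta_\chi$ from the classification.
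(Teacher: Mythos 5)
Your proposal is sound in outline and reaches the same destination as the paper, but by a genuinely different route, and it has one real (though fixable) gap in the unipotent step.

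\textbf{Comparison with the paper.} The paper first applies Kronecker--Weber to factor the action through $(\bZ/n\bZ)^\times\times\bZ_p^\times$, splits off the torsion part $H$ by $H$-isotypic decomposition, and then takes a $\bZ_p$-pro-generator-stable full flag with one-dimensional graded pieces. Lemma~\ref{lem:try-2} computes directly with weakly admissible modules to show each one-dimensional graded piece is a cyclotomic power after a finite extension, and Lemma~\ref{lem:lemma2} then splits the iterated extension using Sen's theorem ($\bC_p$-admissible plus torsion-freeness of $E$). You instead decompose into generalized eigenspaces for continuous characters, dispose of the characters by quoting Tate's theorem that a Hodge--Tate character of $G_{\bQ_p}$ is locally algebraic (so each $\chi$ is $\varepsilon^k\eta$ with $\eta$ of finite order), and kill the unipotent part by a slope computation ($t_N=t_H=0$ forces unit-root, $N\varphi=p\varphi N$ then forces $N=0$, so potentially unramified) combined with the global fact that $\bQ$ has no nontrivial everywhere-unramified abelian extension. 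Your argument thus replaces the paper's Lemma~\ref{lem:try-2} with the standard classification of abelian Hodge--Tate characters, and replaces the paper's use of Sen's theorem at $p$ (for splitting) with a weakly admissible module computation plus a \emph{global} input. A nice byproduct, as you note, is that the unipotent obstruction vanishes already over $\bQ$; the paper kills it only after a cyclotomic base change.

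\textbf{The gap.} You conclude ``$c_j$ is everywhere unramified'' from (i) unramified outside $\shfcond$ and (ii) unramified at $p$ via potential crystallinity and torsion-freeness. But (i) and (ii) leave the primes $\ell\in\shfcond\setminus\{p\}$, where $V$ \emph{is} allowed to ramify, unaddressed. Without handling them, you cannot invoke the triviality of $\mathrm{Cl}(\bQ)$. The fix is quick but needs to be said: for $\ell\neq p$, the image of $I_\ell$ in $G_\bQ^{\ab}$ is $\bZ_\ell^\times$ (local class field theory), whose torsion part lands nowhere in the torsion-free group $\bar E$ and whose pro-$\ell$ part admits no nontrivial continuous homomorphism to the $p$-adically topologized $\bar E$ (if $a\ne 0$ then $m\mapsto ma$ is not continuous for the $\ell$-adic topology on $\bZ$). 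So a continuous additive character of $G_\bQ$ into $\bar E$ is \emph{automatically} unramified at all $\ell\neq p$, and the only nontrivial input is at $p$. With that observation inserted, your unipotent step closes, and in fact hypothesis~(b) is not even needed for that part of the argument.

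Two smaller imprecisions worth tightening: the parenthetical ``$t_N(D)=t_H(D)=0$'' only records the equality for the whole module, whereas forcing all slopes to vanish also uses the inequality $t_N\geq t_H$ on $\varphi$-stable submodules; and the finiteness of the $\eta_\chi$ follows simply from $\dim_E V<\infty$, so Hermite--Minkowski is an unnecessary invocation.
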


\begin{proof}
	By the Kronecker--Weber theorem, there is an integer $n\geq 1$, divisible only by
	primes in $\shfcond$, such that the action of $G_{\bQ,S}$ on $V$ factors through
	the cyclotomic character $G_{\bQ,S}\to (\bZ/n\bZ)^*\times \bZ_p^*$.  
	Let $H$ be the torsion subgroup of $(\bZ/n\bZ)^*\times \bZ_p^*$.
	Let us show that it is enough to consider the case where $H$ acts trivially on $V$.

	Let $G$ denote the quotient $(\bZ/n\bZ)^*\times \bZ_p^*/H$.  
	Since $G\cong\bZ_p$, we have
		$$
		(\bZ/n\bZ)^*\times \bZ_p^* \cong G\times H.
		$$
	By~(\ref{eq:cyclotomic-reps-and-coeffs}), we can assume every irreducible $E$-linear
	representation of $H$ is one-dimensional.  For each character $\rho:H\to E^*$,
	let $V_{\rho}$ denote the summand of $V$ on which $H$ acts via $\rho$.
	Since $G\times H$ is abelian, $V_{\rho}$ is stable under $G\times H$.
	Therefore we have a decomposition of Galois representations
		$$
		V = \bigoplus_{\rho} V_{\rho}.
		$$
	It is enough to show each $V_{\rho}$ is potentially cyclotomic.  On the
	other hand, because
	each $V_{\rho}$ satisfies (a)--(c), it is enough to assume
	$V=V_{\rho}$ for some $\rho$.	
	Now observe that $\rho$, viewed as a Galois representation by the composition
		$$
		G\times H \longmap H \longlabelmap{\rho} E^*,
		$$
	is potentially cyclotomic, since it is potentially trivial.
	Therefore it is enough to show $V\tn\rho^{-1}$
	is potentially cyclotomic.  Similarly, $\rho^{-1}$ satisfies (a)--(c) above,
	and hence so does $V\tn\rho^{-1}$.  But $H$ acts trivially on $V\tn\rho^{-1}$.
	Therefore it is indeed enough to consider the case where $H$ acts trivially on $V$.

	Let $g$ be a pro-generator of $G/H$.
	By~(\ref{eq:cyclotomic-reps-and-coeffs}), 
	it is sufficient to assume that $E$ contains all the eigenvalues of $g$
	acting on $V$.  Then $V$ has a
	$g$-stable filtration 
		$$
		0=V_0\subseteq V_1\subseteq \cdots  \subseteq V_{d} = V
		$$ 
	by sub-$E$-vector spaces such that
	$\dim_E(V_{i}/V_{i-1})= 1$, for $i=1,\dots,d$.  Because $g$ is a pro-generator
	of $G/H$, the filtration is $G/H$-stable.
	
	Because $V$ is potentially semi-stable, so is each subquotient $V_i/V_{i-1}$.
	For each $i$, the lemma~\ref{lem:try-2} implies
	there is finite extension $K_i$ of $\bQ_p$ in 
	$\bQ_p(\zeta_{p^\infty})$ such that
	the action of $\Gal(\bQ_p(\zeta_{p^\infty})/K_i)$ 
	on $V_{i}/V_{i-1}$ is an integral power of the cyclotomic character.
	Take an integer $r\geq 0$ such that $\bQ_p(\zeta_{p^r})$ contains $K_1,\dots,K_d$.
	Then the action of $\Gal(\bQ(\zeta_{p^\infty})/\bQ_p(\zeta_{p^r}))$ on
	$V_{i}/V_{i-1}$ is an integral power of the cyclotomic character.
	In other words, $V$ is an iterated extension of 
	$\Gal(\bQ(\zeta_{p^\infty})/\bQ_p(\zeta_{p^r}))$-representations
	of the form $E(j)$.  
	But by~\ref{lem:lemma2}, it must then be a direct sum of
	such representations.  Thus it is cyclotomic as a representation of 
	$\Gal(\bQ(\zeta_{p^\infty})/\bQ_p(\zeta_{p^r}))$.
	But because the natural map
		$$
		\Gal(\bQ_p(\zeta_{p^\infty})/\bQ_p(\zeta_{p^r})) \longmap
			\Gal(\bQ(\zeta_{p^\infty})/\bQ(\zeta_{p^r}))
		$$
	is an isomorphism, $V$ is  cyclotomic as a representation of $G_{\bQ(\zeta_{p^r})}$.
\end{proof}

\begin{lemma}\label{lem:try-2}
	Let $E$ be a finite extension of $\bQ_p$, and let $V$ be
	a one-dimensional $E$-vector space with a continuous $E$-linear action of
	$\Gal(\bQ_p(\zeta_{p^\infty})/\bQ_p)$.
	Assume further that $V$ is potentially semi-stable (as a $\bQ_p$-representation of 
	$G_{\bQ_p}$).  Then there is a finite extension $K$ of $\bQ_p$ such that
	for some integer $i$, we have $V\cong E(i)$ as representations of 
	$\Gal(K(\zeta_{p^\infty})/K)$.
\end{lemma}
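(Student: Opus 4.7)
The plan is to use the classification of one-dimensional Hodge-Tate characters of $G_{\bQ_p}$: every such character is the product of an integer power of the cyclotomic character $\chi_{\mathrm{cyc}}$ and a potentially unramified character. For the character describing $V$, which factors through the inertia quotient $\Gal(\bQ_p(\zeta_{p^\infty})/\bQ_p)$, I would show that the potentially unramified factor must in fact have finite order, and then kill it by passing to a cyclotomic extension of $\bQ_p$.

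First, identify $V$ with a continuous character $\chi\:\Gamma\to E^\times$, where $\Gamma \dfn \Gal(\bQ_p(\zeta_{p^\infty})/\bQ_p)$. Potential semistability implies Hodge-Tate, so for each embedding $\sigma\:E\hookrightarrow\bar\bQ_p$ the composition $\sigma\chi\:G_{\bQ_p}\to\bar\bQ_p^\times$ is a one-dimensional Hodge-Tate character. Applying the classical Serre--Tate classification, write $\sigma\chi = \chi_{\mathrm{cyc}}^{h_\sigma}\cdot\psi_\sigma$ with $h_\sigma\in\bZ$ and $\psi_\sigma$ potentially unramified. Because $\chi$, and hence each $\sigma\chi$, factors through the inertia quotient $\Gamma$, so does $\psi_\sigma$; but a character of $\Gamma$ that is trivial on some open subgroup necessarily has finite order, so each $\psi_\sigma$ has finite order.

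Next, choose $r\geq 0$ large enough that every $\psi_\sigma$ (there are only finitely many embeddings $\sigma$) is trivial on $H_r \dfn \Gal(\bQ_p(\zeta_{p^\infty})/\bQ_p(\zeta_{p^r}))$. On $H_r$, each $\sigma\chi$ then equals $\chi_{\mathrm{cyc}}^{h_\sigma}$, whose values lie in $\bZ_p^\times \subseteq \bQ_p^\times$. Since this subring is pointwise fixed by every embedding $\sigma$, the character $\chi|_{H_r}$ itself takes values in $\bQ_p^\times$, and consequently all the integers $h_\sigma$ must coincide with some common value $i\in\bZ$. Setting $K \dfn \bQ_p(\zeta_{p^r})$ and identifying $H_r$ with $\Gal(K(\zeta_{p^\infty})/K)$ yields $V \cong E(i)$ as representations of the latter group.

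The main obstacle is the $p$-adic Hodge theory input invoked in the second paragraph: the Serre--Tate classification of one-dimensional Hodge-Tate characters of $G_{\bQ_p}$ as a power of $\chi_{\mathrm{cyc}}$ times a potentially unramified character. The remaining steps are bookkeeping, delicate only in that one must treat all the embeddings $\sigma\:E\hookrightarrow\bar\bQ_p$ simultaneously and verify that they yield a common integer weight.
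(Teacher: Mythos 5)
Your proof is correct, but it takes a genuinely different route from the paper's. The paper goes directly through the filtered $(\varphi,N)$-module $M$ attached to $V$: using $\dim_E M = 1$ it pins down the single Hodge jump $j$ and the scalar $\alpha\in E$ by which $\varphi$ acts, uses weak admissibility to force $v(\alpha)=j$, twists to make $j=0$, passes to $W(\bar k)$ and trivializes $\varphi$ by a successive-approximation (Hensel/Hilbert~90-type) argument on $(E\tn W(\bar k))^\times$, observes $N=0$ automatically from $\varphi N = pN\varphi$, and concludes that $M$ is trivial after a finite extension. You instead never touch the $(\varphi,N)$-module: you only use that potentially semi-stable implies Hodge--Tate and invoke Sen's theorem on $\bC_p$-admissible characters (equivalently, the Serre--Tate--Sen classification of one-dimensional Hodge--Tate characters of $G_{\bQ_p}$ as $\chi_{\mathrm{cyc}}^{h}\cdot(\text{potentially unramified})$). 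The one point worth tightening is the claim that the integers $h_\sigma$ coincide and that $\chi|_{H_r}$ is $\bQ_p$-valued: the clean order of reasoning is to pick one embedding $\sigma$, note $\sigma(\chi(g))=\chi_{\mathrm{cyc}}(g)^{h_\sigma}\in\bQ_p$ forces $\chi(g)\in\bQ_p$ with $\chi(g)=\chi_{\mathrm{cyc}}(g)^{h_\sigma}$, and then a Galois-conjugation comparison (or just applying a second embedding $\tau$) gives $\chi_{\mathrm{cyc}}(g)^{h_\sigma}=\chi_{\mathrm{cyc}}(g)^{h_\tau}$; since $\chi_{\mathrm{cyc}}(H_r)$ contains non-torsion elements this yields $h_\sigma=h_\tau$. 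Similarly, "$\psi_\sigma$ factors through $\Gamma$ and is potentially unramified, hence finite order" deserves a sentence: $I_{\bQ_p}\to\Gamma$ is surjective because $\bQ_p(\zeta_{p^\infty})/\bQ_p$ is totally ramified, and the image of an open subgroup of $I_{\bQ_p}$ is a closed finite-index, hence open, subgroup of $\Gamma$. Overall your route is more elementary: it needs only the Hodge--Tate property and Sen's theorem rather than the Colmez--Fontaine equivalence that underlies the paper's $(\varphi,N)$-module computation, while the paper's proof is more self-contained given its reliance on weak admissibility elsewhere in the section.
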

\begin{proof}
	Let $d=[E:\bQ_p]$.
	Let $M$ be the weakly admissible module associated to $V$.
	Then $M$ inherits from $V$ an action of $E$.  That is, 
	every $e\in E$ acts as a morphism of weakly admissible modules.
	Therefore the Hodge
	filtration on $M$ is a filtration by sub-$E$-modules
	and the Frobenius and monodromy operators are $E$-linear.
	Because $\dim_E(M)=\dim_{\bQ_p}(M)/d=1$,
 	there is an integer $j$ such that $F^jM = M$ and $F^{j+1}M=0$, and
	there is an element $\alpha\in E$ such that
	the endomorphism $\varphi$ of $M$ is multiplication by $\alpha$.
	Therefore the the Hodge number of $\det_{\bQ_p}(M)$ is $dj$ and the slope 
	is $d v_{\bQ_p}(\alpha)$.
	Because $M$ is weakly admissible, these two must be equal,
	and so we have $v_{\bQ_p}(\alpha) = j$.  Replacing $M$ by
	$M(-j)$, it suffices to assume $j=0$.  We now want to show
	that $M$ becomes trivial after a finite extension.
	
	Let $k$ denote the residue field of $K$.
	Let $M'=M\tn_{W(k)} W(\bar{k})$.  Then $M'$ is a one-dimensional potentially
	semi-stable weakly admissible $E$-module over $W(\bar{k})$ with $j=0$.
	Let us show that $M'$ is isomorphic to the trivial weakly admissible module.
	
	Let us now show that we can change basis of $M'$ so that $\alpha=1$.  
	(This is a consequence of Manin's theorem when $E=\bQ_p$.)
	The ring $E\tn W(\bar{k})$ has an endomorphism $\id\tn \sigma$.
	The ratio of this with the identity map gives a group endomorphism $f$ of
	$(E\tn W(\bar{k}))^*$ sending $a\tn x\mapsto a\tn\sigma(x)/x$.
	We need to show this is a surjection.  
	Define a filtration $A^i$ of $(E\tn W(\bar{k}))^*$ by setting $A^i$
		$$
		(A\tn W(\bar{k}))^* \longmap (E/\m^i\tn W(\bar{k}))^*,
		$$
	where $\m$ is the maximal ideal of $E$.
	Because the ring $E\tn W(\bar{k})$ is
	$\m$-adically complete, the group $(E\tn W(\bar{k}))^*$ is complete
	with respect to this filtration.  Therefore it is enough to show $f$ is
	surjective on the associated graded abelian group.  We have
	$\gr^0 A = (E/\m \tn \bar{k})^*$ and for $i\geq 1$,
	we have $\gr^i A\equiv (\m^i/\m^{i+1} \tn \bar{k})$ by the map $x\mapsto x-1$.
	The map $f$ becomes $x\mapsto x^{p-1}$ on $\gr^0 A$, and it becomes
	$a\tn x\mapsto a\tn (x^p-x)$ on $\gr^iA$ for $i\geq 1$.  Because $\bar{k}$ is
	algebraically closed, both these maps are surjective.
	
	Let us now show
	that the monodromy operator $N$ is zero.
	But this holds because $\varphi$ and $N$ are two $E$-linear endomorphisms
	of a one-dimensional vector space satisfying $\varphi N = p N\varphi$
	and $\varphi\neq 0$.  And so the weakly admissible module associated
	to the $G_{\bQ\tn W(\bar{k})}$-representation $V$ is trivial.
	
	Therefore there is a finite extension $L$ of $\bQ\tn W(\bar{k})$ such that
	$G_L$ acts trivially on $V$.  Therefore there is a finite extension $K$
	of $\bQ_p$ such that the inertia group $I_K$ of $G_K$ acts trivially on $V$.
	Since the action of $G_{\bQ_p}$ on $V$ factors through 
	$\Gal(\bQ_p(\zeta_{p^\infty})/\bQ_p)$, the action of $G_K$ .
\end{proof}

\begin{lemma}\label{lem:lemma2}
	Let $E$ and $K$ be finite extensions of $\bQ_p$, and
	let $V$ be a finite-dimensional continuous $E$-representation of 
	$\Gal(K(\zeta_{p^\infty})/K(\zeta_{p^b}))$.
	If $V$ is Hodge--Tate and
	has a semi-simplification which is isomorphic to a sum of cyclotomic representations,
	then $V$ itself is isomorphic to a sum of cyclotomic representations.
\end{lemma}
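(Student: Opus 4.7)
The plan is to reduce the statement, via the generalized eigenspace decomposition of a topological generator of $\Gamma := \Gal(K(\zeta_{p^\infty})/K(\zeta_{p^b}))$, to one standard input from $p$-adic Hodge theory: that a Hodge--Tate representation of weight zero of a torsion-free pro-cyclic $\Gamma$ has trivial $\Gamma$-action.

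First I would note that, after possibly enlarging $b$, the group $\Gamma$ may be assumed torsion-free and hence isomorphic to $\bZ_p$; let $\gamma$ be a topological generator. The cyclotomic character sends $\gamma$ to an element $\chi(\gamma)\in 1+p^b\bZ_p$ of infinite order, so the scalars $\chi(\gamma)^n$ for $n\in\bZ$ are pairwise distinct in $E^*$. Since the characteristic polynomial of $\gamma$ on $V$ agrees with that on its semi-simplification, namely $\prod_i(T-\chi(\gamma)^{n_i})$, the $E[\gamma]$-module $V$ splits over $E$ as a direct sum of generalized eigenspaces $V=\bigoplus_n V_n$, where $V_n$ is the generalized $\chi(\gamma)^n$-eigenspace. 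Each $V_n$ is $\Gamma$-stable (since $\gamma$ generates $\Gamma$), is itself Hodge--Tate, and has all Hodge--Tate weights equal to $n$.

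Next I would twist each $V_n$ by $E(-n)$ and invoke Sen's theorem (equivalently, Tate's theorem that Hodge--Tate-weight-zero representations become unramified on a finite-index subgroup): the twisted representation $V_n\tn E(-n)$ is Hodge--Tate with all weights zero, so the image of $\Gamma$ in $\GL(V_n\tn E(-n))$ is finite, hence trivial, since $\Gamma\iso\bZ_p$ is torsion-free. Therefore $V_n\tn E(-n)\iso E^{\dim_E V_n}$ as a $\Gamma$-representation, and untwisting gives $V_n\iso E(n)^{\dim_E V_n}$. Summing over $n$ yields the desired decomposition of $V$.

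The only substantive input is Sen's theorem; the rest is formal manipulation of the eigenspace decomposition together with a Tate-twist trick. The one subtle point is arranging that $\Gamma$ is torsion-free, which for the application to~\ref{thm:abstract-abelian-Galois} is handled simply by choosing $r$ large at the outset.
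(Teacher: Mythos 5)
Your argument is essentially the same as the paper's: both decompose $V$ into pieces on which $\gamma$ has a single generalized eigenvalue $\chi(\gamma)^n$ (the paper does this via a block upper-triangular change of basis, you do it via the generalized eigenspace decomposition), twist each piece to Hodge--Tate weight zero, and invoke Sen's theorem to conclude that the resulting representation has finite image. Your version is slightly more direct in that it applies Sen to each whole block at once rather than inducting on dimension through $2\times 2$ extensions.

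However, there is a genuine gap in your final step: \emph{``the image of $\Gamma$ in $\GL(V_n\tn E(-n))$ is finite, hence trivial, since $\Gamma\iso\bZ_p$ is torsion-free.''} Torsion-freeness of the source group does not force a finite image to be trivial; a continuous homomorphism $\bZ_p\to\GL_m(E)$ can perfectly well have image $\bZ/p^k\bZ$ (e.g.\ $\bZ_p\surjmap\bZ/p\bZ\hookrightarrow E'^{*}$ for a suitable extension $E'$). What saves you is a fact you set up but did not use: on $V_n\tn E(-n)$ the generator $\gamma$ acts with single generalized eigenvalue $1$, i.e.\ unipotently. A unipotent element of finite order in $\GL_m(E)$ over a field of characteristic $0$ is the identity, since it is simultaneously unipotent and semisimple. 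Hence $\gamma$, and therefore $\Gamma$, acts trivially on $V_n\tn E(-n)$, and the rest of your argument goes through. (Note that the paper avoids this pitfall differently: after reducing to a two-step extension, it identifies the extension class with a group homomorphism $G\to E$ into the \emph{additive} group $E$, which is torsion-free as a \emph{target}, so finite image does force triviality there.)
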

\begin{proof}
	Let $G$ denote $\Gal(K(\zeta_{p^\infty})/K(\zeta_{p^b}))$.
	Giving a representation of $G$ is the
	same as giving a representation of its Lie algebra, which is one-dimensional;
	and so this is the same as giving a matrix $\theta$.  We have assumed that
	$\theta$ is upper-triangular with only integers on the diagonal.
	One can change basis to make $\theta$ a block-diagonal matrix, where each
	block is upper-triangular and has a single integer on the diagonal.
	Therefore it is enough to assume $\theta$ has is upper triangular and has
	a single integer on the diagonal.  Twisting by a cyclotomic character, we
	can assume $\theta$ is nilpotent.
	
	Thus it suffices to assume that $V$ has a trivial semi-simplification.
	By induction on $\dim_E(V)$, we need only prove that any Hodge--Tate extension $W$ of
	the trivial representation by itself is split.  Therefore the representation
	is given by a group map $G\to E$ into the upper-right corner
	of the matrix.  Since the Hodge--Tate weights
	of $W$ are both $0$, it must in fact be $\bC_p$-admissible.  Therefore by Sen's
	theorem, 
	it factors through finite quotient of $G$.  But the only 
	group map $G\to E$ with this property is the trivial map.  Therefore $V$ 
	is a split extension.
\end{proof}

\section{$p$-adic \'etale cohomology}
\label{sec:p-adic-cohomology}

Let $X$ be a separated $\Lambda$-scheme of finite type over $\bZ$.
Let $r_{m,n}$ denote the Hodge number $h^{m,m}$ of $\Hc^n(X^{\an},\bC)$,
where $X^{\an}$ is the complex-analytic space underlying $X$.
(See Deligne~\cite{Deligne:HodgeII}, (2.3.7), for the definition of the Hodge numbers
of a mixed Hodge structure and Deligne~\cite{Deligne:HodgeI} \marpar{correct ref?}
for the fact that $\Hc^n(X^{\an},\bC)$
carries a natural mixed Hodge structure.)
Let us write
		$$
		P(t) = \sum_{m,n} (-1)^n r_{m,n}t^m \in \bZ[t].
		$$
Finally, let us fix an algebraic closure $\bar\bQ$ of $\bQ$.

\begin{theorem}\label{thm:lambda-varieties-are-abelian}
	For any integer $\coeff>0$, the action of $\Gal(\qbar/\bQ)$
	on $\Hetc^n(X_{\qbar},\bZ/\coeff\bZ)$ factors through $\Gal(\qbar/\bQ)^{\ab}$.
\end{theorem}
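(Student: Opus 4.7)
The plan is to prove that the image $G$ of $\Gal(\bar\bQ/\bQ)$ in $\Aut(\Hetc^n(X_{\bar\bQ},\bZ/\coeff\bZ))$ is abelian. Since the coefficient module is finite, $G$ is a finite group. Applying Proposition~\ref{pro:cohomology-is-constructible} to $X_{\bZ[1/\coeff]}$, the sheaf $\mathcal{F} := R^n f_!(\bZ/\coeff\bZ)$ is lisse on $\Spec\bZ[1/T]$ for some finite set $T$ of primes containing the prime divisors of $\coeff$. Hence the Galois action factors through $\pi_1(\Spec\bZ[1/T])$, and Chebotarev's density theorem applies to the finite quotient $G$: every conjugacy class in $G$ is the Frobenius class $[\Frob_p]$ for infinitely many primes $p\notin T$.

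For each $p\notin T$, the Frobenius lift $\psi_p\colon X\to X$ induces, by functoriality of $Rf_!$, an endomorphism $\Psi_p$ of $\mathcal{F}$, and hence of the stalk $\Hetc^n(X_{\bar\bQ},\bZ/\coeff\bZ)$. The key assertion is that $\Psi_p$ lies in $G$ and belongs to the conjugacy class $[\Frob_p]$. The reason is that $\psi_p$ reduces mod $p$ to the absolute Frobenius on $X_{\bF_p}$, whose base change to $\bar\bF_p$ is the relative Frobenius $\Frob_{X_{\bar\bF_p}/\bar\bF_p}$. The latter acts on $\Hetc^n(X_{\bar\bF_p},\bZ/\coeff\bZ)$ as the geometric Frobenius, which agrees with the action of a chosen representative of $[\Frob_p]$. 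Since $\mathcal{F}$ is lisse at $p$, the specialization isomorphism between the stalks at $\bar\bQ$ and $\bar\bF_p$ intertwines the endomorphism $\Psi_p$ on both sides, yielding the identification in $G$.

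The $\Lambda$-structure provides that $\psi_p\psi_q=\psi_q\psi_p$ for all primes $p,q$, so the $\Psi_p$ all commute. Let $H:=\langle\Psi_p:p\notin T\rangle\subseteq G$; then $H$ is abelian. By the Chebotarev statement above, every conjugacy class of $G$ contains some $\Psi_p$, hence meets $H$. Therefore $G=\bigcup_{g\in G}gHg^{-1}$. A classical counting argument now forces $H=G$: for any proper subgroup $H\lneq G$ one has
\[
	\Big|\bigcup_{g\in G} gHg^{-1}\Big| \;\leq\; 1+[G:H](|H|-1) \;<\; |G|.
\]
Thus $G=H$ is abelian, which is precisely the desired conclusion.

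The main obstacle is the identification in the second paragraph: producing $\Psi_p$ as an endomorphism of $\mathcal{F}$ via functoriality of $Rf_!$ along the endomorphism $\psi_p$ (which in general will not extend to any chosen Nagata compactification of $X$), and then verifying compatibility of this action with the specialization map for constructible lisse sheaves. Once these foundational points are in place, the computation on the special fiber identifying $\psi_p^*$ with geometric Frobenius is routine, and the abstract group-theoretic finish uses no further arithmetic input beyond Chebotarev.
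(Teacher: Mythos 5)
Your proof is correct and uses the same essential mechanism as the paper's: reduce modulo each prime $p$ of good reduction, identify the endomorphism induced by $\psi_p$ on $\Hetc^n(X_{\bar\bF_p},\bZ/\coeff\bZ)$ with geometric Frobenius, transport this to the generic stalk via the proper base change (specialization) isomorphism for the lisse sheaf $R^nf_!(\bZ/\coeff\bZ)$, and then combine the commutativity of the $\psi_p$ with Chebotarev. The only real divergence is the finishing step. The paper notes that for \emph{every} choice of decomposition group $D$ over an unramified prime $p$, the operator $\psi_p^*$ coincides with $\Frob_D^{-1}$; since $\psi_p^*$ is canonical and the various $\psi_p^*$ commute, \emph{all} Frobenius elements have commuting actions, and Chebotarev (every element of $\Gal(K/\bQ)$ is a Frobenius element) finishes immediately. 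You instead record only that $\Psi_p$ lies in the conjugacy class $[\Frob_p]$, generate an abelian subgroup $H$ from these, and close the gap with the group-theoretic lemma that a finite group is never the union of the conjugates of a proper subgroup. Both are valid; yours trades the slightly more delicate claim (that the identification $\psi_p^*=\Frob_D^{-1}$ holds simultaneously for all $D$ over $p$) for a small counting argument. Finally, the foundational point you flag in your last paragraph --- that $\psi_p$ need not be proper, so functoriality of $R f_!$ is not automatic --- is indeed present and is not addressed in the paper either; one workaround is to observe that $\psi_p$ \emph{is} finite on the fiber at $p$, define $\psi_p^*$ there, and transport it along the specialization isomorphism rather than trying to act directly on the generic stalk.
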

\begin{proof}
	By Deligne~\cite{Deligne:SGA4.5}, [Arcata] IV (6.2), 
	the sheaf $R^nf_!(\bZ/\coeff\bZ)$ is constructible.
\marpar{clean up}
	Therefore, there exists a finite Galois extension $K/\bQ$ and an integer $N>0$
	such that the restriction of $R^nf_!(\bZ/\coeff\bZ)$ to $\Spec \sO_K[1/N]$ is
	the constant sheaf associated to an abelian group $V$.
	Here, $\sO_K$ denotes the ring of integers of $K$.
	By functoriality $V$ has an action of $G=\Gal(K/\bQ)$.
	Let us also assume that $N$ is a multiple of the discriminant of $K$.
	By the base-change theorem, we have an isomorphism
		$$
		\Hetc^n(X_{\qbar},\bZ/\coeff\bZ)\cong V 
		$$
	of representation of $\Gal(\bar\bQ/\bQ)$ (which acts on $V$ via the map to $G$).
	Therefore it is enough to show the action of $G$ on $V$ is through its abelianization.
	
	Let $p$ be a prime not dividing $N$.  Let $\fp$ be a prime of $K$ over $p$,
	let $k_\fp$ denote $\sO_K/\fp$, and let $D$ denote the decomposition subgroup
	of $G$ corresponding to $\fp$.  Because $p\nmid N$, 
	the map $D\to\Gal(k_\fp/\bF_p)$ is an isomorphism.
	By the proper base-change theorem, we have an isomorphism
		$$
		V \cong\Hetc^n(X_{\bar{\bF}_p},\bZ/\coeff\bZ)
		$$
	of representations of $\Gal(\bar\bF_p/\bF_p)$.
	Further, the endomorphism $\psi_p^*$ of $V$ 
	(induced by the endomorphism $\psi_p$ of $X$) corresponds
	to the endomorphism $\Fr_p^*$ of the right-hand side.
	By generalities about the Frobenius map, $\Fr_p^*$ acts on 
	$\Hetc^n(X_{\bar{\bF}_p},\bZ/\coeff\bZ)$ as the inverse of the
	residual arithmetic Frobenius element $\Frob_p\in D/I$, the automorphism defined by
	$\Frob_p(x)=x^p$.
	Therefore $\psi_p^*$ acts on $V$ in the same way as $\Frob^{-1}_D$
	where $\Frob_D$ is the element of $D$ mapping to $\Frob_p$.
	
	On the other hand, the endomorphisms $\psi_p$ of $X$
	commute with each other as $p$ varies.  Therefore the endomorphisms $\psi_p^*$ of
	$\Hetc^n(X_{\qbar},\bZ/\coeff\bZ)$ commute with each other.
	By the above, the action of any Frobenius elements $\Frob_D$ for $p\nmid N$, 
	commute with each other.  By Chebotarev's theorem 
	(see Neukirch~\cite{Neukirch:CFT}, V (6.4)), every element of $G$
	is such a Frobenius element.  Therefore $G$ acts on $V$ through its abelianization.
\end{proof}

\begin{corollary}\label{cor:lambda-varieties-are-Artin-Tate}
	For any prime number $p$, there is an integer $N\geq 1$ 
	such that there is an isomorphism
	of representations of $\Gal(\qbar/\bQ(\zeta_N))$:
	\begin{equation} \label{eq:sum-of-tate-twists}
		\Hetc^n(X_{\qbar},\bQ_p) \cong \bigoplus_m \bQ_p(-m)^{r_{m,n}}.
	\end{equation}
	If $X$ is smooth and proper over $\bZ[1/M]$, for some integer $M>0$,
	then $N$ can be taken 
	such that all its prime divisors are divisors of $Mp$.
\end{corollary}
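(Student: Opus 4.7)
The plan is to combine the abelian-Galois statement of Theorem~\ref{thm:lambda-varieties-are-abelian} with the abstract classification Theorem~\ref{thm:abstract-abelian-Galois} (applied in the $p$-adic Hodge setting) and then pin down the multiplicities via the Hodge--Tate comparison for compactly supported cohomology.

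First, I would apply Theorem~\ref{thm:lambda-varieties-are-abelian} with $\coeff=p^k$ for each $k\geq 1$, pass to the inverse limit over $k$, and invert $p$; this gives that $\Gal(\qbar/\bQ)$ acts on $V\dfn \Hetc^n(X_{\qbar},\bQ_p)$ through its abelianization. By Proposition~\ref{pro:cohomology-is-constructible}, $V$ is unramified away from a finite set $\shfcond$ of primes containing $p$, and in the smooth proper case we may take every element of $\shfcond$ to divide $Mp$. By Kisin's potentially semi-stable theorem (already invoked in~\ref{thm:abelian-motives-are-Artin-Tate}), $V$ is potentially semi-stable at $p$. The hypotheses of Theorem~\ref{thm:abstract-abelian-Galois} therefore hold, yielding an integer $N$, divisible only by primes in $\shfcond$, such that $V$ restricted to $\Gal(\qbar/\bQ(\zeta_N))$ is isomorphic to $\bigoplus_m \bQ_p(-m)^{a_m}$ for some nonnegative integers $a_m$. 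This already gives the refined bound on $N$ in the smooth proper case.

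Next I would identify $a_m$ with $r_{m,n}$. For this I would invoke a Hodge--Tate comparison for compactly supported cohomology of the form $V\tn_{\bQ_p}\bC_p \iso \bigoplus_{i,j} H^{i,j}(\Hc^n(X^{\an},\bC))\tn_{\bC}\bC_p(-i)$, compatibly with the mixed Hodge structure. The multiplicity of Hodge--Tate weight $m$ on the left is $a_m$, whereas on the right it is $\sum_j h^{m,j}$. Since each cyclotomic summand $\bQ_p(-m)$ is of pure Hodge bidegree $(m,m)$, the cyclotomic decomposition of $V$ forces $h^{m,j}=0$ for $j\neq m$, whence $a_m = h^{m,m}=r_{m,n}$, as desired.

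The main obstacle is justifying the Hodge--Tate comparison at the level of compactly supported cohomology in the non-smooth, non-proper setting, compatibly with the mixed Hodge structure. I would handle this via cohomological descent and simplicial resolutions, reducing to the smooth proper case where Faltings--Tsuji supplies the decomposition; alternatively, Beilinson's motivic $p$-adic Hodge theory furnishes the comparison directly. Everything else is formal given the structural results of Sections~\ref{sec:abelian-motives} and~\ref{sec:p-adic-cohomology}.
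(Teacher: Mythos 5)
Your overall strategy matches the paper's: pass from finite coefficients to $\bQ_p$ to show the Galois action on $V = \Hetc^n(X_{\bar\bQ},\bQ_p)$ is abelian via Theorem~\ref{thm:lambda-varieties-are-abelian}, control ramification with Proposition~\ref{pro:cohomology-is-constructible}, invoke Kisin's potential semi-stability, and feed these into the abstract Theorem~\ref{thm:abstract-abelian-Galois} (the paper routes through its packaged version Theorem~\ref{thm:abelian-motives-are-Artin-Tate}, which is cosmetically different but equivalent). The refined bound on $N$ in the smooth proper case is handled the same way in both.

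The gap is in your identification of the multiplicities. What you actually extract from the Hodge--Tate decomposition is $a_m = \dim\gr^m_F \Hc^n(X^{\an},\bC) = \sum_j h^{m,j}$, not $a_m = h^{m,m}$. Your comparison isomorphism
\[
V\tn_{\bQ_p}\bC_p \iso \bigoplus_{i,j} H^{i,j}(\Hc^n(X^{\an},\bC))\tn_{\bC}\bC_p(-i)
\]
does not see the index $j$: only the Hodge filtration degree $i$ appears in the Tate twist, so the asserted grouping by $(i,j)$ carries no Galois-theoretic content. The claim ``since each cyclotomic summand $\bQ_p(-m)$ is of pure Hodge bidegree $(m,m)$, the cyclotomic decomposition of $V$ forces $h^{m,j}=0$ for $j\neq m$'' is therefore an assertion, not a deduction from what you have written; the Hodge--Tate side cannot force off-diagonal vanishing by itself. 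To rescue the argument one must bring in the weight filtration---e.g.\ via Frobenius eigenvalues at some $\ell\neq p$ (Weil II plus the $\ell$-adic/Betti comparison gives $\dim\gr^W_{2m} = a_m$ and vanishing of $\gr^W$ in odd weights), or equivalently via the Newton slopes on $D_{\mathrm{pst}}(V)$. Combined with $\dim\gr^m_F = a_m$ and Hodge symmetry $h^{i,j}=h^{j,i}$, the identity $\sum_{i,j}(i-j)^2 h^{i,j}=0$ drops out and one gets the off-diagonal vanishing; only then is $a_m = h^{m,m} = r_{m,n}$. The paper's own proof is also terse here---it simply cites Kisin~\cite{Kisin:pst}, (3.2), with a marginal note that more detail is needed---but it does not mislabel what $p$-adic Hodge theory delivers. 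Your final paragraph's suggestions (cohomological descent or Beilinson's motivic $p$-adic Hodge theory) are reasonable ways to get the comparison in the non-smooth non-proper setting, but they address a different, smaller issue than the one identified above.
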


\marpar{Is there a minimal\\conductor?}
Let us call an integer $N>0$ satisfying the conclusion
of~\ref{cor:lambda-varieties-are-Artin-Tate} a \emph{conductor} of $X$.

\begin{proof}
	By~\ref{thm:lambda-varieties-are-abelian} 
	and~\ref{thm:abelian-motives-are-Artin-Tate}, there exists an integer $N\geq 1$
	such that $\Hetc^n(X_{\qbar},\bQ_p)$ is a  cyclotomic representation
	of $\Gal(\qbar/\bQ(\zeta_N))$. 
	The fact that the multiplicities are given by Hodge numbers as shown
	is because $\Hetc^n(X_{\qbar},\bQ_p)$ is potentially
	semi-stable.  See Kisin~\cite{Kisin:pst} (3.2).
	\marpar{more detail}
	
	Last, when $X$ is smooth and proper over $\bZ[1/M]$,
	the Galois representation $\Hetc^n(X_{\qbar},\bQ_p)$
	is unramified at primes not dividing $Mp$, by~\ref{pro:cohomology-is-constructible}.
	Then~\ref{thm:abelian-motives-are-Artin-Tate} implies that
	$N$ can be taken as asserted.
\end{proof}

\subsection{} \emph{Remark.} 
Note that the cohomology of the nodal curve of~\ref{subsec:singular-lines} when $q=-1$ is
non-pure mixed Tate, but the extension class is $q=-1\in\bQ^*$, which being torsion
vanishes when coefficients are taken in $\bQ_p$. Therefore the $\bQ_p$-cohomology is pure
mixed Tate, and there is no contradiction. On the other hand, the previous theorem would
be false with cohomology with coefficients in $\bZ_2$. It would be interesting to see
which other mixed Tate motives with torsion classes can be realized in $\Lambda$-algebraic
geometry.

\begin{corollary}\label{cor:zeta-poly-in-q}
	Let $N$ be a conductor for $X$.
	Then there is a finite set $\shfcond$ of prime numbers such that for any
	finite field $k$ whose cardinality $q$
	is relatively prime to every element of $\shfcond$  
	and satisfies $q\equiv 1\bmod N$, 
	the number of $k$-valued points of $X$ is $P(q)$.
	
	More precisely, 
	the set $\shfcond$ can be taken such that it contains only
	prime numbers $p$ with the property that for every prime $\ell\neq p$
	the sheaf $R^nf_!(\bQ_\ell)$ is not lisse at $p$, where 
	$f$ denotes the map 
		$$
		X\times\Spec\bZ[1/\ell]\to\Spec\bZ.
		$$
\end{corollary}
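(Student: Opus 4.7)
The plan is to combine the Grothendieck--Lefschetz trace formula with the cyclotomic description of cohomology given in~\ref{cor:lambda-varieties-are-Artin-Tate}. Fix a prime $p$, let $k$ be a finite field of characteristic $p$ with $|k|=q$, and choose a prime $\ell\neq p$ such that $R^nf_!(\bQ_\ell)$ is lisse at $p$ for every $n$ in the (finite) range of indices where the sheaf is nonzero. By~\ref{pro:cohomology-is-constructible}, each $R^nf_!(\bQ_\ell)$ is lisse away from a finite set of primes, so only finitely many $p$ can fail to admit such an $\ell$, and we let $\shfcond$ be the set of bad primes. The key point is that the dichotomy only fails when \emph{every} auxiliary $\ell\neq p$ is bad at $p$, which is the precise form of the set $\shfcond$ claimed in the second half of the statement.

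Next I would apply the Grothendieck--Lefschetz trace formula and proper/smooth base change: since $R^nf_!(\bQ_\ell)$ is lisse at $p$, we have an isomorphism $\Hetc^n(X_{\bar\bF_p},\bQ_\ell)\iso \Hetc^n(X_{\qbar},\bQ_\ell)$ of $\bQ_\ell$-vector spaces, compatible with the action of the decomposition group at $p$. Thus
\begin{equation*}
\card{X(k)} \;=\; \sum_n (-1)^n \tr\!\lt(\Frob_q,\,\Hetc^n(X_{\bar\bF_p},\bQ_\ell)\rt)
\;=\; \sum_n (-1)^n \tr\!\lt(\Frob_q,\,\Hetc^n(X_{\qbar},\bQ_\ell)\rt),
\end{equation*}
where $\Frob_q$ is the geometric Frobenius at any prime of $\bar\bQ$ above $p$ raised to the degree $[k:\bF_p]$.

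The hypothesis $q\equiv 1\bmod N$ ensures that $\Frob_q$ acts trivially on $\bQ(\zeta_N)$, so it lies in $\Gal(\qbar/\bQ(\zeta_N))$. Applying the $\ell$-adic version of~\ref{cor:lambda-varieties-are-Artin-Tate}, with $\ell$ in place of the letter $p$ there, decomposes $\Hetc^n(X_{\qbar},\bQ_\ell)$ as a representation of $\Gal(\qbar/\bQ(\zeta_N))$ into $\bigoplus_m \bQ_\ell(-m)^{r_{m,n}}$. Since $\Frob_q$ acts on $\bQ_\ell(-m)$ as multiplication by $q^m$, the trace on $\Hetc^n$ equals $\sum_m r_{m,n}q^m$, giving
\begin{equation*}
\card{X(k)} \;=\; \sum_{m,n} (-1)^n r_{m,n} q^m \;=\; P(q).
\end{equation*}

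The main obstacle is really just bookkeeping: ensuring that~\ref{cor:lambda-varieties-are-Artin-Tate} does apply with $\ell$-adic rather than $p$-adic coefficients (this is fine because its proof goes through~\ref{thm:lambda-varieties-are-abelian} and~\ref{thm:abelian-motives-are-Artin-Tate}, both of which work verbatim for any auxiliary prime coefficient), and that the conductor $N$ can be chosen uniformly, independently of the choice of auxiliary prime $\ell$. The latter is controlled by the Kronecker--Weber step in~\ref{thm:abstract-abelian-Galois}, which only depends on the abelian image of Galois, not on the coefficient field. Once those compatibilities are in place, the computation is immediate from the trace formula.
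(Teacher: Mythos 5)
Your proposal is correct and follows essentially the same route as the paper: apply the Grothendieck--Lefschetz trace formula (Houzel, SGA 5 XV), use proper base change and the lisseness hypothesis to identify $\Hetc^n(X_{\bar k},\bQ_\ell)$ with $\Hetc^n(X_{\qbar},\bQ_\ell)$ compatibly with the decomposition group, then feed in the Tate-twist decomposition of~\ref{cor:lambda-varieties-are-Artin-Tate} and compute the trace of Frobenius to get $P(q)$. The only material difference is that you let $\ell$ vary with $p$ before forming $\shfcond$, which is the cleaner way to obtain the sharper set promised in the second paragraph of the statement, whereas the paper fixes a single auxiliary $\ell$ throughout; and you are right to flag, as a loose end in both arguments, that the conductor $N$ of~\ref{cor:lambda-varieties-are-Artin-Tate} must be usable for $\ell$-adic coefficients rather than the $p$-adic coefficients in which it was a priori produced.
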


\begin{proof}	
	Fix a prime number $\ell\neq p$.
	Let $\shfcond$ denote the set of prime numbers at which $R^nf_!(\bQ_\ell)$ is 
	not lisse.	By~\ref{pro:cohomology-is-constructible}, 
	$\shfcond$ is finite.  
	
	Because of the restrictions on $q$, we have a factorization
		$$
		\Spec k\longlabelmap{a} \Spec\bZ[\zeta_N,T^{-1}] \longlabelmap{b} \Spec\bZ.
		$$
	Let $D_a$ denote a decomposition group in $\Gal(\bar{\bQ}/\bQ(\zeta_N))$ 
	at the point $a$,
	\marpar{Use cospecial-\\ization language?}
	and let $\bar{k}$ denote the corresponding algebraic closure of $k$.
	Then since $b^*R^nf_!(\bQ_{\ell})$ is lisse, the proper base change theorem
	implies 
		$$
		H^n(X_{\bar{k}},\bQ_{\ell}) \cong H^n(X_{\bar{\bQ}},\bQ_{\ell})
		$$
	as representations of $D_a$, where $D_a$ acts on the left side through
	$\Gal(\bar{k}/k)$.
	In particular, the trace of the geometric Frobenius element
	$F\in\Gal(\bar{k}/k)$ on $H^n(X_{\bar{k}},\bQ_{\ell})$ agrees with
	the trace of the inverse
	of an arithmetic Frobenius element $\Frob_a$ of $\Gal(\bar{\bQ}/\bQ(\zeta_N))$ on 
	$H^n(X_{\bar{\bQ}},\bQ_{\ell})$.

	Therefore by the Lefschetz fixed-point formula (Houzel, SGA 5, exp.\ XV \cite{SGA5}) 
	and~\ref{cor:lambda-varieties-are-Artin-Tate},
	the number of $\bF_q$-valued points of $X$ is
		\begin{align*}
			\sum_n (-1)^n \tr(F \mid \Hetc^n(X_{\bar{\bF}_q}, \bQ_\ell)) 
				&= \sum_n (-1)^n \tr(\Frob_a^{-1} \mid \Hetc^n(X_{\bar{\bQ}},\bQ_\ell)) \\
				&= \sum_n (-1)^n \tr(\Frob_a^{-1}\mid\bigoplus_m \bQ_\ell(-m)^{r_{m,n}}) \\
				&= \sum_n (-1)^n \sum_m r_{m,n} q^m = P(q).			
		\end{align*}
\end{proof}

\begin{corollary}\label{cor:smooth-proper-zeta-poly}
	Let $X$ be a smooth proper $\Lambda$-scheme over $\bZ[1/M]$.
\marpar{generalize to proper\\and locally acyclic?}
	Then there is an integer $N$ divisible only by the primes dividing $M$
	such that for all prime powers $q>1$ with $q\equiv 1\bmod N$,
	the number of $\bF_q$-valued points of $X$ is $P(q)$.
\end{corollary}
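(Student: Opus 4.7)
The plan is to deduce the statement from \ref{cor:zeta-poly-in-q} by arranging that, under the smooth and proper hypothesis, both the conductor $N$ and the exceptional set $\shfcond$ appearing there involve only primes dividing $M$. Once this is done, enlarging $N$ so that it is divisible by every prime in $\shfcond$ makes the congruence $q\equiv 1\bmod N$ automatically force $q$ to be coprime to $\shfcond$, and the formula $\#X(\bF_q)=P(q)$ drops out of \ref{cor:zeta-poly-in-q}.

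For the set $\shfcond$ this is easy. The second assertion of \ref{pro:cohomology-is-constructible} says that when $X$ is smooth and proper over $\bZ[1/M]$, the sheaf $R^n f_!(\bQ_\ell)$ is lisse at every prime not dividing $M$, for any auxiliary $\ell$. Since $\shfcond$ is characterized in \ref{cor:zeta-poly-in-q} as the set of primes $p$ at which \emph{every} $R^n f_!(\bQ_\ell)$ with $\ell\ne p$ fails to be lisse at $p$, we obtain $\shfcond \subseteq \{p : p\mid M\}$.

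For the conductor I would fix an auxiliary prime $\ell$ dividing $M$ and apply \ref{cor:lambda-varieties-are-Artin-Tate} to the $\bQ_\ell$-\'etale cohomology of $X$. This produces an integer $N_\ell$ with prime divisors in $M\ell$ --- i.e.\ in $M$ itself, since $\ell\mid M$ --- for which $\Hetc^n(X_\qbar,\bQ_\ell)$ is cyclotomic as a representation of $\Gal(\qbar/\bQ(\zeta_{N_\ell}))$. I would then take $N$ to be the least common multiple of $N_\ell$, the primes in $\shfcond$, and $\ell$. The prime divisors of $N$ lie in $M$, and the hypothesis $q\equiv 1\bmod N$ forces $\ell\nmid q$, so that the characteristic $p$ of $\bF_q$ satisfies $p\ne\ell$; this is exactly what is needed to legitimate the $\bQ_\ell$-adic Lefschetz computation performed in the proof of \ref{cor:zeta-poly-in-q}, and that proof then gives the desired $\#X(\bF_q)=P(q)$.

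The subtle point is the edge case $M=1$, where the prescribed $N$ must equal $1$ --- giving the strong conclusion that $\#X(\bF_q)=P(q)$ for \emph{every} prime power $q>1$ --- and the choice ``$\ell\mid M$'' above breaks down. I expect this to be the main obstacle. To handle it I would try either to use two distinct auxiliary primes $\ell_1,\ell_2$ and invoke the $\ell$-independence of Frobenius traces on $\Hetc^n$, ensuring that for each characteristic $p$ at least one of the two realizations gives a Lefschetz computation with no residual congruence on $q$; or else to exploit the anticipated rigidity of smooth proper $\Lambda$-schemes over $\bZ$ (related to Theorem~\ref{thm:intro-B}) to show that each $\Hetc^n(X_\qbar,\bQ_\ell)$ is in fact cyclotomic already as a representation of $\Gal(\qbar/\bQ)$, so that no further cyclotomic extension is needed.
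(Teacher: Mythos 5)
Your first two reductions are fine: deducing $\shfcond\subseteq\{p:p\mid M\}$ from \ref{pro:cohomology-is-constructible}, and then arranging for $q\equiv 1\bmod N$ to force $q$ coprime to $\shfcond$. Where your route diverges from the paper's is in the choice of the auxiliary prime and, consequently, in how you try to control the prime divisors of the conductor $N$. You fix $\ell\mid M$ once and for all, whereas the paper instead chooses $\ell$ \emph{after} $q$ is given: set $p=\mathrm{char}\,\bF_q$ (so $p\nmid N$, hence $p\nmid M$ once $N$ and $M$ have the same prime support), then take any $\ell\neq p$, observe $X\to\Spec\bZ[1/M\ell]$ is smooth proper so $R^nf_*(\bQ_\ell)$ is lisse at $p$, and invoke \ref{cor:zeta-poly-in-q}. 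This late choice of $\ell$ means the conductor $N$ never needs to include $\ell$, and so the construction of $N$ does not collapse when $M=1$.

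But there is a genuine gap, and it lies in what you (and, in its present wording, the paper) take \ref{cor:lambda-varieties-are-Artin-Tate} to give. As stated, that corollary only guarantees $N$ with prime divisors dividing $Mp$, not $M$. The paper's proof asserts that the $p$-adic representation is ``unramified away from $M$,'' which literally read is false when $p\nmid M$: a smooth proper variety over $\bZ[1/M]$ has $p$-adic cohomology that is crystalline, not unramified, at such $p$. The point that actually makes the argument go — and that neither your proposal nor the quoted lemma chain fully spells out — is that for abelian representations, crystallinity at $p$ forces the finite-order part of the character at $p$ to be unramified, since a crystalline abelian character of $G_{\bQ_p}$ is an unramified twist of a power of $\chi_{\mathrm{cyc}}$. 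Thus the conductor picks up no contribution at $p$, and one may indeed take $N$ supported only on the primes of $M$. Your second proposed fix for $M=1$ (showing the representations are already cyclotomic over $\bQ$) is exactly this observation in the special case $M=1$; the correct general statement is the strengthening of Theorem \ref{thm:abelian-motives-are-Artin-Tate} from ``potentially semistable'' to ``crystalline at the primes of good reduction'' giving $N$ with prime support in $M$ alone. Once that is in hand, the paper's device of choosing $\ell$ depending on $q$ uniformly handles every $M\geq 1$, with no need for the $\ell$-independence argument in your first proposed workaround.
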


\begin{proof}
	The representation $\Hetc^n(X_{\qbar},\bQ_p)$ 
	of $\Gal(\bar\bQ/\bQ)$ is unramified away from $M$, and so
	there is an integer $N$ satisfying the property 
	of~\ref{cor:lambda-varieties-are-Artin-Tate}
	and such that it has the same prime divisors as $M$.

  	Now let $q>1$ be a prime power with $q\equiv 1\bmod N$.
	Let $p$ be the prime number dividing $q$, 
	let $\ell\neq p$ be another prime number.
	Observe that $p\nmid M\ell$.

	The map 
		$$
		f\:X\times\Spec\bZ[1/\ell]\to\Spec\bZ[1/M\ell]
		$$
	is smooth and proper, and so
	by Deligne (SGA 4 1/2, 
	[Arcata] V (3.1)~\cite{Deligne:SGA4.5}), the sheaf $R^nf_*(\bQ_\ell)$ is lisse.  
	In particular, it is lisse at $p$.
	Therefore by~\ref{cor:zeta-poly-in-q}, 
	there is a set of primes $T$ not containing $p$ such
	that the conclusion of~\ref{cor:zeta-poly-in-q} holds.
	Since $q\equiv 1\bmod N$, the number of $\bF_q$-valued points
	of $X$ is $P(q)$.
\end{proof}

\begin{corollary}\label{cor:}
	Let $X$ be a smooth proper scheme over $\bF_1$.
	Then for all prime powers $q>1$, the number of $\bF_q$-valued points
	of $X$ is $P(q)$.
\end{corollary}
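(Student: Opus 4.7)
The plan is to deduce this immediately from Corollary \ref{cor:smooth-proper-zeta-poly} by specializing to $M=1$. Since $X$ is smooth and proper over $\bF_1$, the underlying scheme $\ltm^*X$ is smooth and proper over $\bZ=\bZ[1/1]$, so the hypotheses of Corollary \ref{cor:smooth-proper-zeta-poly} apply with $M=1$.

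First I would invoke Corollary \ref{cor:smooth-proper-zeta-poly} to obtain an integer $N$ divisible only by the prime divisors of $M=1$. Since the empty product equals $1$, the only such integer is $N=1$. Next I would observe that the congruence condition $q\equiv 1\bmod N$ in the conclusion of that corollary becomes vacuous when $N=1$: every prime power $q>1$ trivially satisfies $q\equiv 1\bmod 1$. Therefore the conclusion of Corollary \ref{cor:smooth-proper-zeta-poly} applies uniformly to all prime powers $q>1$, yielding $\card{X(\bF_q)}=P(q)$ as desired.

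There is no real obstacle here; the corollary is a clean specialization and the entire argument is a one-line application of the previous result. The substantive content---the reduction of point counting to the Hodge polynomial via $p$-adic Hodge theory, the Kronecker--Weber theorem, and the Lefschetz trace formula---has already been absorbed into Corollary \ref{cor:smooth-proper-zeta-poly}. The only thing to verify is that ``smooth and proper over $\bF_1$'' is being interpreted consistently, namely as a $\Lambda$-scheme whose underlying scheme is smooth and proper over $\bZ$ in the usual sense; this matches the convention established in section~\ref{sec:lambda-spaces}.
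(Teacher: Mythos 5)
Your proposal is correct and is exactly the paper's proof: the paper simply states ``This is~\ref{cor:smooth-proper-zeta-poly} in the case $M=1$,'' and you have spelled out the same specialization (with $N=1$ making the congruence vacuous) in more detail.
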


\begin{proof}
	This is~\ref{cor:smooth-proper-zeta-poly} in the case $M=1$.
\end{proof}

\subsection{} \emph{Remark.}
\label{subsec:F1-points-on-Chebychev}
The number of complemented $\bF_1$-valued points of $X$ equals $P(1)$
when $X=\bA^n$ or $X=\bP^n$, with their toric $\Lambda$-structures.
On the other hand, if $X$ is the Chebychev line~(\ref{subsec:chebychev-line}), 
it has no complemented $\bF_1$-valued points, but $P(1)=1$.

\subsection{} \emph{Remark.}
Some conditions on $q$ of the kind in~\ref{cor:zeta-poly-in-q} are necessary.
For example, let 
$X=\Spec\bZ[\zeta_N,1/NM]$.  For primes $p$ dividing $NM$, let $\psi_p=\id$;
and for all other $p$, let $\psi_p$ be the unique Frobenius lift.

Then $r_{0,0}=\phi(N)$ and all other $r_{m,n}$ are 
zero.  Therefore $P(t)=\phi(N)$.  On the other hand,
there are $\phi(N)$ points in $X(\bF_q)$
if and only if both $NM\in\bF_q^*$ and $q\equiv 1\bmod N$.
Furthermore the sheaf $R^nf_!(\bQ_{\ell})$ is lisse exactly at the prime numbers
that do not divide $NM$.

\begin{proposition}\label{pro:weight-constraints}
\label{lem:lambda-spaces-have-Fq-points}
	Let $X$ be a nonempty separated $\Lambda$-scheme of finite type over $\bZ$.
	Let $P(t)$ and $N$ be as in~\ref{thm:lambda-varieties-are-abelian}.
	Then for every prime $p\gg 0$ and every integer $r\geq 1$ with
	$p^r\equiv 1 \bmod N$, there is an $\bF_{q}$-valued point of $X$.
\end{proposition}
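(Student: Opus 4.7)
The plan is to combine Corollary~\ref{cor:zeta-poly-in-q} with a direct verification that $P(q) > 0$ for all sufficiently large $q$. That corollary, applied to $X$, produces a finite set $\shfcond$ of primes with the property that $\#X(\bF_{p^r}) = P(p^r)$ whenever $p \notin \shfcond$ and $p^r \equiv 1 \pmod N$. So it suffices to show that $P(t)$ has positive leading coefficient: once $P(q) > 0$ for all $q \geq q_0$, taking $p \notin \shfcond$ with $p \geq q_0$ forces $q = p^r \geq p \geq q_0$, and hence $\#X(\bF_{p^r}) = P(p^r) > 0$ for every $r \geq 1$ with $p^r \equiv 1 \pmod N$.

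\textbf{Identifying the leading coefficient.} To analyze $P$, I would first replace $X$ by $\red X$, which by Section~\ref{sec:lambda-spaces} is a nonempty closed $\Lambda$-subscheme, and is in addition flat over $\bZ$; any $\bF_q$-point of $\red X$ is a fortiori an $\bF_q$-point of $X$, so this reduction is harmless. By flatness, $X_\bQ$ is nonempty of some dimension $d \geq 0$, and $X^{\an}$ has complex dimension $d$. By Deligne's theory of mixed Hodge structures, $\Hc^{2d}(X^{\an},\bC)$ is pure of weight $2d$, with dimension equal to the number of $d$-dimensional irreducible components of $X^{\an}$; in particular $r_{d,2d} \geq 1$. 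For $n > 2d$ the cohomology $\Hc^n(X^{\an},\bC)$ vanishes, and for $n < 2d$ its weights are all at most $n < 2d$, so the $(d,d)$ Hodge component (which would carry weight $2d$) vanishes. Since $h^{p,q}$ of $\Hc^n$ also vanishes for $p > d$ or $q > d$, we get $r_{m,n} = 0$ for $m > d$. Combining these, $P(t) = r_{d,2d}\, t^d + (\text{lower-order terms})$ has degree exactly $d$ with leading coefficient $r_{d,2d} \geq 1$.

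\textbf{Conclusion.} It follows that $P(q) \to +\infty$ as $q \to \infty$, so there exists $q_0$ with $P(q) > 0$ for all $q \geq q_0$, and the plan of the first paragraph finishes the proof.

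\textbf{Main obstacle.} The non-trivial input is the positivity $r_{d,2d} \geq 1$ and the vanishing statements $r_{m,n} = 0$ for $m > d$, or for $m = d$ and $n \neq 2d$, in the generality where $X^{\an}$ is neither smooth nor proper. All of these follow from Deligne's work on mixed Hodge structures on compactly supported cohomology of arbitrary complex varieties, but the application should be made explicit.
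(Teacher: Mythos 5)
Your proposal is correct and takes essentially the same route as the paper's own proof: reduce to Corollary~\ref{cor:zeta-poly-in-q}, then argue that $P(t)\to\infty$ because $X_{\bC}$ is nonempty and the top-degree coefficient of $P$ is a positive Hodge number. You fill in more Hodge-theoretic detail than the paper (which simply cites Deligne \cite{Deligne:HodgeIII}, (8.2.4)), and you are more careful about passing to $\red X$ to ensure flatness and hence nonemptiness of $X_{\bQ}$; you also correctly identify the degree of $P$ as $d=\dim X_{\bC}$ with leading coefficient $r_{d,2d}$, silently fixing what appears to be a typo in the paper's text, which asserts the degree is $2d$.
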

\begin{proof}
	By~\ref{cor:zeta-poly-in-q}, for every prime $p$ not in set $T$ supplied and for
	every integer $r$ with $p^r\equiv 1\bmod N$, the number of $\bF_{p^r}$-valued
	points of $X$ is $P(p^r)$.

	On the other hand, it follows from general facts about
	Hodge numbers of varieties
	proved by Deligne~\cite{Deligne:HodgeIII}, (8.2.4),
	that $P(t)\to\infty$ as $t\to\infty$.
	Indeed, since $X_{\bC}$ is nonempty, its dimension $d$ is non-negative,
	the degree of $P(t)$ is $2d$, and the coefficient of $t^{2d}$ is the
	number of connected components of $X_{\bC}$, which is positive.

 	Thus for sufficiently large $p$ and all $r$ as above, we see
	that the number of $\bF_{p^r}$-valued points
	is $P(p^r)$ and that $P(p^r)$ is positive.
\end{proof}

\begin{theorem}\label{thm:anothertry}
	Let $X$ be a nonempty separated $\Lambda$-scheme of finite type over $\bZ$.
	Then there is a nonempty closed $\Lambda$-subscheme $Z$ of $X$ which is
	\'etale over $\bZ$.
\end{theorem}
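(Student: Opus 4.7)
The plan is to produce the desired $Z$ by Noetherian induction, taking $Z$ to be a minimal nonempty closed $\Lambda$-subscheme of $X$. First, since $\red X$ is a closed $\Lambda$-subscheme of $X$, I may assume $X$ is reduced and hence flat over $\bZ$; Noetherian induction on the underlying space $|X|$ then furnishes a minimal nonempty closed $\Lambda$-subscheme $Z\subseteq X$, and applying the same argument to $Z$ gives $\red Z=Z$, so $Z$ is reduced and flat over $\bZ$.

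The crucial step is to show that $Z$ has many periodic primes. For each prime $p$ with $Z\otimes\bF_p\ne\emptyset$, pick a closed point of the fibre with residue field $\bF_{p^s}$; since $\psi_p$ restricts to the $p$-th power Frobenius on $Z\otimes\bF_p$, this point is fixed by $\psi_p^s$. The reduced closed subscheme $\red{\mathrm{Fix}(\psi_p^s)\cap Z}$ is therefore nonempty. It is $\psi_\ell$-stable for every prime $\ell$ because $\psi_\ell$ commutes with $\psi_p^s$ and with $\id_Z$, and, being reduced, inherits a $\Lambda$-structure; thus it is a nonempty closed $\Lambda$-subscheme of $Z$. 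By minimality it equals $Z$, so $\psi_p^s$ acts as $\id_Z$.

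By \ref{lem:lambda-spaces-have-Fq-points} applied to $Z$, the fibre $Z\otimes\bF_p$ is nonempty for infinitely many $p$, so $Z$ has infinitely many periodic primes; \ref{pro:finiteness-of-periodic-spaces} then gives that $Z$ is affine and quasi-finite over $\bZ$. At every $p$ with nonempty fibre, $\psi_p$ equals the Frobenius on $Z\otimes\bF_p$ and has finite order dividing the $s$ chosen above, so \ref{lem:Frob-periodic-implies-discrete} implies $Z\otimes\bF_p$ is a finite disjoint union of spectra of finite field extensions of $\bF_p$, and in particular étale over $\bF_p$. For primes with empty fibre, étaleness is automatic. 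Combined with the flatness of $Z$, the morphism $Z\to\Spec\bZ$ is fibrewise étale and flat, hence étale.

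The subtle point I anticipate is the assertion that $\red{\mathrm{Fix}(\psi_p^s)\cap Z}$ inherits a genuine $\Lambda$-structure from $Z$ in the monad-theoretic sense of the paper---particularly when the fixed locus fails to be flat over $\bZ$. Strictly, this requires invoking the paper's principle that reduced subspaces of $\Lambda$-spaces inherit $\Lambda$-structures, applied with care to this specific construction; it is here that Borger's formalism, rather than merely the commuting family of Frobenius lifts, is essential.
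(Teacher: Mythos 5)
Your argument is correct, and it takes a genuinely different route from the paper's. The paper constructs an explicit descending chain $X_0\supseteq X_1\supseteq\cdots$ of fixed loci of $\psi_{p_i}$ for a sequence of distinct primes, stabilizes by Noetherianity at some $Z=\Spec B$, and then---because this $Z$ is not visibly \'etale---passes to a further quotient $C=(B/I)/\mathrm{(torsion)}$, where $I=\bigcup_m\ker\psi_m$, and runs a separate argument (injectivity of each $\psi_m$ on $C$, hence $\psi_p$ an automorphism of $C$, hence Frobenius bijective on each $C/pC$) to show $\Spec C$ is \'etale. You instead take an abstract \emph{minimal} nonempty closed $\Lambda$-subscheme and exploit minimality to force $\psi_p$ to have finite order on all of $Z$ for \emph{every} prime with nonempty fibre; this makes each special fibre a disjoint union of spectra of finite fields immediately and lets you skip the quotient step entirely. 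Your version is cleaner: minimality does at one stroke what the paper achieves in two stages (chain construction, then quotient by $I$ and torsion).

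Two small points worth tightening. First, your final sentence concludes \'etaleness from ``fibrewise \'etale and flat,'' but you have only verified the special fibres; you should also note that the generic fibre $Z\otimes\bQ$ is finite (by quasi-finiteness) and reduced (localization of a reduced ring), hence \'etale over the perfect field $\bQ$---or alternatively, observe that the unramified locus is open in $Z$ and contains every closed point, hence is all of $Z$. Second, the hand-wringing in your last paragraph is not necessary: the fixed locus $\mathrm{Fix}(\psi_p^s)\cap Z$ is the equalizer of the two $\Lambda$-maps $\psi_p^s$ and $\id_Z$, hence a $\Lambda$-subspace directly because $\Space_{S/\Lambda}$ has all limits and $\ltm^*$ preserves them; its reduction is then a closed $\Lambda$-subscheme by the paper's general statement that $\red{X}$ is one. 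No special pleading about flatness is needed---the monad formalism handles the non-flat case automatically.
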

\begin{proof}
	Let us first reduce to the case where
	$X$ is reduced and quasi-finite over $\bZ$.
	Define a sequence $X_0\supseteq X_1\supseteq\cdots$ 
	of nonempty closed sub-$\Lambda$-schemes of $X$ recursively as follows:
	Let $X_0=X$.  For $n\geq 0$, assume $X_n$ has already been defined.
	Then let $p=p_{n+1}$ be a prime number distinct from $p_1,\dots,p_n$ 
	such that $X_n$ has an $\bF_{p}$-valued point $x$.
	This exists by~\ref{lem:lambda-spaces-have-Fq-points}.
	(For definiteness, we can take $p$ to 
	be the smallest such prime, say.)  Let $X_{n+1}$ be the fixed locus of $X_n$ under 
	$\psi_{p}$, which is to say the equalizer in the category of $\Lambda$-spaces
	of $\psi_{p}\:X_n\to X_n$ and the identity map.  Because $X_n$ is separated,
	$X_{n+1}$ is a closed sub-$\Lambda$-scheme of $X_n$.  
	Because $x$ is an
	$\bF_{p}$-valued point of $X_n$, it is fixed by the Frobenius map
	on the special fiber of $X_n$ over $p$.  Therefore it is fixed by $\psi_p$,
	and so $x$ is also a point of $X_{n+1}$.
	In particular, $X_{n+1}$ is nonempty.

 	Let $Z= \bigcap_{n\geq 0} X_n$.  
	Because $X$ is of finite type over $\bZ$, it
	is noetherian.  Therefore there is an integer $n\geq 0$ such that $Z=X_n$.
	It follows that $Z$ is nonempty and, by~\ref{pro:finiteness-of-periodic-spaces}, 
	it is also affine and quasi-finite
	over $\bZ$. Therefore we can assume $X=Z=\Spec B$, 
	where $B$ is quasi-finite over $\bZ$.
	Since the reduced subscheme of any nonzero $\Lambda$-ring of
	finite type over $\bZ$ is the same, we can also assume $B$ is reduced.
	
	Now let us show that $B$ has a quotient $\Lambda$-ring which is \'etale over $\bZ$.
	Suppose $B$ is not \'etale over $\bZ$ at some prime $p$.
	For each integer $m\geq 1$, let $I_m$ denote the kernel of $\psi_m\:B\to B$.
	The $\Lambda$-ideals $I_m$ are ordered by divisibility on $m$,
	and the ordering is cofinal.
	Let $I$ denote the $\Lambda$-ideal $\cup_m I_m$, and
	let $C$ denote the quotient of $B/I$ by the ideal of torsion elements.
	Then $C$ is a $\Lambda$-ring quotient of $C$ and, hence, of $B$.
	\marpartd{ref}
	Note that $1\not\in I$, so $B/I$ is a nonzero $\Lambda$-ring.
	But	$1$ is not a torsion element in any nonzero $\Lambda$-ring.  	
	Therefore $C$ is nonzero and is flat, quasi-finite, and of finite type over $\bZ$.
	Let us finally show that $C$ is actually \'etale over $\bZ$.
	
	For each integer $m\geq 1$,
	the endomorphism $\psi_m$ of $C$ is injective.  Indeed, if $b$ is a lift
	to $B$ of any element of the kernel, then $n\psi_m(b)=0$ for some integer $n\geq 1$.
	Therefore $\psi_m(nb)=0$ and hence $nb=0$ and hence $b$ is torsion.  Therefore
	the image of $b$ in $C$ is $0$.
	
	Now let $p$ be a prime.
	Since $\psi_p$ is an injective endomorphism of $C$, it induces an injective
	endomorphism of $\bQ \tn_{\bZ} C$.  Since $\bQ\tn_{\bZ}C$ is finite over $\bQ$,
	this endomorphism
	is in fact an automorphism of finite order.  Since $C$ is flat over $\bZ$,
	we have $C\subseteq \bQ\tn_{\bZ}C$, and so $\psi_p$ is an automorphism of $C$.
	Therefore the Frobenius endomorphism of $C/pC$ is an automorphism.
	Thus $C/pC$ is reduced, and so $C$ is \'etale at $p$.  
\end{proof}

\begin{corollary}\label{cor:F1-points-exist}
	Let $X$ be a nonempty proper scheme over $\bF_1$.  Then $X(\bF_1)\neq\emptyset$.
\end{corollary}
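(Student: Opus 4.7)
The plan is to reduce the corollary to Theorem~\ref{thm:anothertry} together with Minkowski's classical result that $\Spec\bZ$ has no nontrivial connected finite étale covers. First, since $X$ is nonempty and proper (hence separated and of finite type) over $\bZ$, Theorem~\ref{thm:anothertry} produces a nonempty closed $\Lambda$-subscheme $Z \subseteq X$ that is étale over $\bZ$. Because $X \to \Spec\bZ$ is proper and $Z \hookrightarrow X$ is a closed immersion, the composite $Z \to \Spec\bZ$ is proper as well; combining this with étaleness gives that $Z$ is finite étale over $\bZ$.

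Next, I invoke Minkowski's discriminant bound: every connected finite étale cover of $\Spec\bZ$ is trivial, so $Z$ is a finite disjoint union $\coprod_{i=1}^{n} \Spec\bZ$ with $n \geq 1$. It remains to understand the $\Lambda$-structure on $Z$ and then to exhibit an $\bF_1$-point.

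The key observation is that every ring endomorphism of $\bZ^{n}$ is determined by its action on the primitive idempotents, hence by a map of finite sets $\{1,\dots,n\} \to \{1,\dots,n\}$, and this combinatorial data is preserved by reduction modulo $p$. Since the $p$-th power Frobenius on $\bF_p^{n}$ is the identity, the condition that $\psi_p$ lift Frobenius forces each $\psi_p$ on $Z$ to be the identity. Thus $Z$ is a disjoint union of $n$ copies of $\bF_1$ itself, so the inclusion of any single component yields a $\Lambda$-morphism $\Spec\bZ \to Z \hookrightarrow X$, which is precisely an $\bF_1$-valued point of $X$.

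Essentially all of the difficulty is already absorbed into Theorem~\ref{thm:anothertry}. The only remaining issue, which is minor, is the rigidity step showing that the $\Lambda$-structure on a finite étale scheme over $\bZ$ is forced to be trivial; this is immediate from the combinatorial description of endomorphisms of $\bZ^{n}$ combined with the Frobenius-lift condition, so no further input is needed.
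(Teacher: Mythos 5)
Your proof is correct and follows essentially the same route as the paper: apply Theorem~\ref{thm:anothertry} to obtain a nonempty closed $\Lambda$-subscheme $Z$ étale over $\bZ$, observe that $Z$ is moreover proper (hence finite) over $\bZ$, invoke Minkowski to identify $Z$ with a finite disjoint union of copies of $\Spec\bZ$, and then show the $\Lambda$-structure is forced to be trivial so that any component gives a $\Lambda$-morphism $\Spec\bZ\to X$. The only cosmetic difference is that you justify the rigidity step (each $\psi_p$ is the identity) by the combinatorics of idempotents in $\bZ^n$, whereas the paper argues directly from flatness and the fact that Frobenius on each special fiber is the identity; these amount to the same thing.
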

\begin{proof}
	The scheme $Z$ supplied by~\ref{thm:anothertry} 
	is proper and \'etale over $\bZ$.  Therefore each of its finitely many
	connected components must be $\Spec\bZ$, by Minkowski's theorem.
 	But the only $\Lambda$-structure on such a space
	is the disjoint-union $\Lambda$-structure.  Indeed, it is flat over $\bZ$, so it suffices to
	show that every Frobenius lift $\psi_p$ is the identity.  But the Frobenius map on each
	special fiber is the identity.  Since $Z$ is a disjoint union of
	copies of $\Spec\bZ$, each $\psi_p$ must be the identity.
	
	Therefore $Z$, as a nonempty disjoint union of copies of $\Spec\bF_1$, has an
	$\bF_1$-valued point, and hence so does $X$.
\end{proof}

\begin{corollary}\label{cor:F1-points-exist2}
	Let $U$ a nonempty open $\Lambda$-subscheme of a
	$\Lambda$-scheme $X$ which is proper over $\bZ$.  Then $U$ has a $\bQ$-valued
	$\Lambda$-point.
\end{corollary}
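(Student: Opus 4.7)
The plan is to apply Theorem~\ref{thm:anothertry} twice---once to $U$ and once to the closure inside $X$---and then intersect the outputs. Applied to $U$ (nonempty, separated, and of finite type over $\bZ$), \ref{thm:anothertry} yields a nonempty closed $\Lambda$-subscheme $W\subseteq U$ that is \'etale over $\bZ$. This $W$ need not be proper; let $\bar W$ denote its closure in $X$, endowed with the reduced closed subscheme structure. Because $\psi_p(W)\subseteq W\subseteq\bar W$ and each $\psi_p$ is continuous, we have $\psi_p(\bar W)\subseteq\bar W$ topologically, and since $\bar W$ is reduced this restriction factors through $\bar W$ scheme-theoretically. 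Hence $\bar W$ is a closed $\Lambda$-subscheme of $X$; being closed in the proper scheme $X$, it is itself proper over $\bZ$. Applying \ref{thm:anothertry} again to $\bar W$ produces a nonempty closed $\Lambda$-subscheme $W'\subseteq\bar W$ \'etale over $\bZ$. Since $\bar W$ is proper, so is $W'$, and the Minkowski argument of \ref{cor:F1-points-exist} then shows $W'$ is a disjoint union of copies of $\Spec\bZ$ with $\psi_p=\id$ for every prime $p$.

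The heart of the argument, and what I expect to be the main obstacle, is to show that $W\cap W'$ is nonempty. Note that $W=\bar W\cap U$ is open in $\bar W$ (since $U$ is open in $X$) and dense in $\bar W$ by construction. Both $W$ and $W'$ are \'etale over $\bZ$ and so pure of dimension one, and $\bar W$, as the closure of $W$, is likewise pure of dimension one. Each irreducible component of $W'$ is then a closed irreducible dim-$1$ subset of the pure dim-$1$ scheme $\bar W$, so it must coincide with an irreducible component of $\bar W$. Since $W$ is dense open in $\bar W$, it meets every irreducible component of $\bar W$ and hence every irreducible component of $W'$; therefore $W\cap W'$ is nonempty.

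Finally, $W\cap W'$ is a $\Lambda$-subscheme of $X$ (intersections of $\Lambda$-subspaces being $\Lambda$-subspaces) and is open in $W'\cong\coprod\Spec\bZ$, so it is a disjoint union of open subschemes of the form $\Spec\bZ[1/N_i]$, each carrying the trivial $\Lambda$-structure inherited from $W'$. The composition $\Spec\bQ\to\Spec\bZ[1/N_i]\hookrightarrow W\cap W'\subseteq U$ of the generic point with the inclusion is then a $\Lambda$-map, furnishing the desired $\bQ$-valued $\Lambda$-point of $U$.
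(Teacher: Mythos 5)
Your proposal is correct, but it takes a noticeably longer route than the paper. Both arguments begin identically: apply Theorem~\ref{thm:anothertry} to $U$ to obtain a nonempty closed $\Lambda$-subscheme $Z\subseteq U$ \'etale over $\bZ$, and then form the reduced closure $Y=\overline{Z}$ in $X$, a closed $\Lambda$-subscheme proper over $\bZ$. Where you diverge is in the final step. You apply Theorem~\ref{thm:anothertry} a second time (to $Y$), pass through the Minkowski argument to realize the output $W'$ as $\coprod\Spec\bZ$, and then mount a dimension argument---showing every irreducible component of $W'$ is forced to be an irreducible component of the pure-one-dimensional $Y$, so that the dense open $Z\subseteq Y$ must meet $W'$. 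This argument is sound (the key facts are that a closed irreducible dimension-$d$ subset of a pure-$d$-dimensional noetherian space is an irreducible component, and that a dense open meets every irreducible component), but it is extra machinery.

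The paper instead directly invokes Corollary~\ref{cor:F1-points-exist} on the proper scheme $Y$ to get a $\bZ$-valued $\Lambda$-point, and then makes the simpler observation that $Y_{\bQ}=Z_{\bQ}$: since $Y$ is finite over $\bZ$, its generic fiber is a finite discrete $\bQ$-scheme, and the dense open $Z_{\bQ}\subseteq Y_{\bQ}$ must therefore be all of it. So the $\bQ$-point of $Y$ automatically lands in $Z\subseteq U$, with no need to reason about the intersection of two a priori unrelated subschemes. That one-line generic-fiber observation does the work of your entire irreducible-components analysis. In short, your proof is valid and uses the same ingredients, but you re-derive part of Corollary~\ref{cor:F1-points-exist} instead of citing it, and you supply a dimension argument where a cheaper observation about the generic fiber would suffice.
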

\begin{proof}
	By the theorem above, there is a nonempty closed $\Lambda$-subscheme $Z$ of $U$
	which is \'etale over $\bZ$.
	Let $Y$ denote the closure of $Z$ in $X$ with the reduced subscheme structure.
	Then $Y$ is a closed $\Lambda$-subscheme of $X$.  (Basic property of $\Lambda$-ideals.)
	Because $Y$ is reduced, it is flat.  Because it is the closure of $Z$ it is
	generically finite over $\bZ$.
	A closed subscheme of $X$, it is therefore finite over $\bZ$.
	By~\ref{cor:F1-points-exist}, it has an $\bZ$-valued $\Lambda$-point,
	and hence a $\bQ$-valued $\Lambda$-point.
	Since we have 
		$$
		\Spec\bQ\times_{\Spec\bZ} Y = \Spec\bQ\times_{\Spec\bZ}Z 
			\subseteq \Spec\bQ \times_{\Spec\bZ} U,
		$$
	we see that $U$ has a $\bQ$-valued $\Lambda$-point.
\end{proof}

\subsection{} \emph{Remark.}
The condition that such a compactification $X$ exists 
cannot be dropped.  For example, for any integer $n>0$,
we can make $\bZ[\zeta_n,1/n]$ a $\Lambda$-ring by taking $\psi_p$ to be
anything for $p\mid n$, and to be the unique
choice $\zeta_n\mapsto\zeta_n^p$ for $p\nmid n$.  But
this ring has no maps to $\bQ$.

\subsection{}
	For any $\bF_1$-valued point $x$ of $X$, 
	let $Z_x$ denote the closure of the pre-images of $x$ under the maps
	$\psi_p$, viewed as a reduced closed subscheme.  
	Then $Z_x$ is a complemented closed $\Lambda$-subspace of $X$.
	Therefore the assignment $x\mapsto Z_x$ defines a function 
		$$
		\{\text{$\bF_1$-points of $X$}\}
			\longmap \{\text{complemented reduced closed $\Lambda$-subspaces of $X$}\}.
		$$

\begin{corollary}\label{cor:generic-f1-point}
	Let $X$ be a proper $\Lambda$-scheme over $\bZ$.  
	Assume that $X$ is irreducible as a scheme.  Then there is a
	$\bZ$-valued $\Lambda$-point $x$ such that $Z_x=X$.
\end{corollary}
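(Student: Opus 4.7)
The plan is to argue by contradiction: suppose $Z_x \neq X$ for every $\bF_1$-valued $\Lambda$-point $x$ of $X$, and derive an inconsistency by producing an $\bF_1$-point whose generic fiber lies outside $\bigcup_x Z_x$.

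First I would note that by Corollary~\ref{cor:finitely-many-f1-points}, the set $S$ of $\bF_1$-points of $X$ is finite, and for each $x \in S$ the section $x\:\Spec\bZ\to X$ factors through $Z_x$. The latter holds because $\psi_p\circ x = x$ (as $x$ is a $\Lambda$-map out of the trivially $\Lambda$-structured $\Spec\bZ$), so the image of $x$ is contained in $\psi_p^{-1}(x) \subseteq Z_x$. Since $X$ is irreducible and each $Z_x$ is a proper closed subscheme by our hypothesis, the finite union $W := \bigcup_{x \in S} Z_x$ is a proper closed subset of $X$. By the discussion of finite unions of complemented $\Lambda$-subspaces in \ref{subsec:complemented-points}, $W$ is itself complemented; hence $U := X \setminus W$ is a nonempty open $\Lambda$-subscheme of the proper $\Lambda$-scheme $X$.

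Next I would apply Corollary~\ref{cor:F1-points-exist2} to obtain a $\bQ$-valued $\Lambda$-point $y\:\Spec\bQ \to U$, and extend $y$ (via the inclusion $U\hookrightarrow X$) by the valuative criterion of properness for $X/\bZ$ to a morphism $x\:\Spec\bZ \to X$. The uniqueness of this extension forces $x$ to be $\Lambda$-equivariant: both $\psi_p\circ x$ and $x$ are morphisms $\Spec\bZ\to X$ extending $y$ (since $\psi_p\circ y = y$ by the $\Lambda$-equivariance of $y$), so they must coincide. Thus $x$ is a genuine $\bF_1$-point, i.e., $x \in S$. But then, by the observation above, $x$ factors through $Z_x\subseteq W$, so its generic fiber lies in $W_\bQ$; this contradicts the fact that the generic fiber is $y \in U_\bQ = X_\bQ \setminus W_\bQ$.

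I do not foresee a serious obstacle: the whole argument is a short combinatorial assembly of results already available above---finiteness of $\bF_1$-points (\ref{cor:finitely-many-f1-points}), complementedness of finite unions, the containment $x \in Z_x$ coming from $\Lambda$-equivariance of $x$, the existence theorem~\ref{cor:F1-points-exist2}, and the standard fact that the valuative-criterion extension of a $\Lambda$-map is again a $\Lambda$-map. The only point requiring a little care is the last, which is handled by the uniqueness clause of the valuative criterion.
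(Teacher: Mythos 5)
Your proof is correct and follows essentially the same route as the paper's: form the finite union $Z=\bigcup_{x\in X(\bF_1)}Z_x$, show it is a complemented closed $\Lambda$-subscheme, and use Corollary~\ref{cor:F1-points-exist2} to force $X=Z$, then conclude by irreducibility. The one place where you go beyond the paper's wording is the step the paper dispatches as ``by construction, there are no $\bQ$-valued $\Lambda$-points of $X-Z$'': you justify this explicitly via the valuative criterion of properness and the uniqueness of the extension to see that a $\bQ$-valued $\Lambda$-point of the complement would extend to a $\bZ$-valued $\Lambda$-point and therefore land in its own $Z_x\subseteq Z$. That is exactly the intended justification, so your write-up is a slightly more explicit version of the paper's proof rather than a different argument.
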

\begin{proof}
	Since the set $X(\bF_1)$ is finite and complementary closed $\Lambda$-subschemes
	are stable under finite union, the union 
		$$
		Z=\bigcup_{x\in X(\bF_1)} Z_x,
		$$
	with its reduced scheme structure is a complementary closed $\Lambda$-subscheme.
	By construction, there are no $\bQ$-valued $\Lambda$-points of $X-Z$.
	Therefore by~\ref{cor:F1-points-exist2}, this is only possible if $X=Z$.
	Finally since $Z$ is irreducible, we must have $Z_x=X$ for some $x\in X(\bF_1)$.
\end{proof}

%
%
%
%

\section{Variants}
\label{sec:variations}

One of the motivations behind work on the field with one element has been
to imitate over number fields the theory of 
function fields over a finite field $k$, where we can work over the absolute point 
$\Spec k$.  But $\Lambda$-algebraic geometry works perfectly well over 
function fields, too.  So we can compare $k$-algebras to the function-field 
analogues of $\bF_1$-algebras.

\subsection{} \emph{$\Lambda_{S,\ptst}$-spaces.}
Let $S$ be a scheme of finite type over $\bZ$, and let $\ptst$ be a set of
regular closed points of codimension $1$. For each point $s\in\ptst$, let $q_s$ denote the
cardinality of $s$. Let $X$ be a flat algebraic space over $S$. We can then define a
$\Lambda_{S,\ptst}$-action on $X$ just as we define $\Lambda$-actions, but now we use
commuting endomorphisms $\psi_s\:X\to X$, one for each point $s\in\ptst$, such that $\psi_s$
agrees with the $q_s$-power Frobenius operator on the fiber $X_s$. (See~\cite{Borger:SLAG}.)
If $\ptst$ is the set of all regular closed points of codimension $1$, then we write
$\Lambda_S=\Lambda_{S,\ptst}$.  For example, if $S=\Spec \bZ$, then a $\Lambda_S$-space
is just a $\Lambda$-space.

\newcommand{\tmapa}{a}
\newcommand{\tmapb}{b}
The general theory works just as well when $S$ is arbitrary.  Thus we get a topos
$\Space_{\bF_1^{S,\ptst}}$, whose objects we call spaces over the generalized
field with one element $\bF_1^{S,\ptst}$.  If $S'$ and $\ptst'$ are another instance
of this data, and $\tmapa\:S\to S'$ is a map such that $a(\ptst)\subseteq \ptst'$,
we have a diagram of toposes
\begin{equation} \label{diag:change-of-S-and-E}
		\entrymodifiers={+!!<0pt,\fontdimen22\textfont2>}
		\def\objectstyle{\displaystyle}		
	\xymatrix@R=30pt@C=40pt{
	\Space_{S} \ar^{\ltm}[r] \ar^{\tmapa}[d] 
		& \Space_{\bF_1^{S,\ptst}} \ar^{\tmapb}[d] \\
	\Space_{S'} \ar^{\ltm'}[r]
		& \Space_{\bF_1^{S',\ptst'}}
	}	
\end{equation}
rendered commutative by a certain invertible 2-morphism. \marpar{precise refs}%
Here $\tmapb$ is as in~\cite{Borger:SLAG}.

In particular, $\Space_{\bF_1}$ is the deepest.

\subsection{} \emph{Function fields.}
Let $q$ denote the cardinality of $k$, and let $S$ be a smooth geometrically connected
curve over $k$. We will now construct a factorization of topos maps
	\begin{equation} \label{eq:diag-func-field-comm-topos}
	\xymatrix{
	\Space_S\ar^-{\ltm}[rr]\ar^-{\stmap}[dr]
		&	& \Space_{\bF^S_1}\ar@{-->}_-{\fftm}[dl] \\
		& \Space_{\Spec k}.
	}
	\end{equation}

Let us first define $\ffaus(T)$ for affine (or algebraic) $T$. As a space, set 
	$$
	\ffaus(T)=S\times_k T.
	$$
Since $S\times_k T$ is flat over $S$, giving a $\Lambda_S$-action on $S\times_k T$, is the same as
giving a commuting family of Frobenius lifts. For any maximal ideal $\m$ of $\sO_S$, define
$\psi_\m\colon S\times_k T\to S\times_k T$ by $\psi_{\m} = \id_S\times\Fr^{\m}_T$, where
$\Fr^{\m}_T$ denotes the endomorphism of $T$ which on $\sO_T$ acts as $x\mapsto x^{q_\m}$, where
$q_{\m}$ denotes the cardinality of the residue field $\sO_S/\m$. It is clear the $\psi_\m$ commute
with each other and lift the appropriate Frobenius maps.

The functor $\aff_S\to\Space_{\bF^S_1}$ just defined preserves covering families. Indeed, a map
$U\to V$ in $\Space_{\bF^S_1}$ is an epimorphism if and only if the induced map $\ltm^*U\to\ltm^*V$
is. But $\ltm^*\ffaus$ preserves covering families. Therefore $\ffaus$ does.

For a similar reason, $\ffaus$ sends products to products. Therefore it extends uniquely to a topos
map $f\:\Space_{\bF^S_1}\to\Space_{\Spec k}$, which yields the commutative
diagram~(\ref{eq:diag-func-field-comm-topos}).

In fact, $f$ is essential, meaning that $\ffaus$ has a left adjoint, denoted $\ffalsh$.
We let $\ffalsh(U)$ be the colimit of the coequalizers of the diagrams
	\[
	\displaylabelrightarrows{U}{\Fr^{\m}_U}{\psi_{\m}}{U}
	\]
over all $\m$.  In other words, $\ffalsh(U)$ is the largest quotient $U'$ of $U$ on which
each $\psi_\m$ acts as $\Fr^{\m}_{U'}$.   This is clearly the left adjoint of $\ffaus$.

\subsection{} {\em $\fftm$ is not an isomorphism of toposes.}
For example, let $T$ be an affine space over $S$, the unit of the adjunction
$\fftm_! \dashv \fftm^*$ at $\ltm_!(T)$ is
	\begin{equation} \label{eq:frob-component-map}
 	\ltm_!(T) \longmap \fftm^*\fftm_!\ltm_!(T) = \fftm^*\stmap_!(T)
	\end{equation}
which, by definition, is a map
	\begin{equation} \label{eq:can-of-worms}
		W_S(T) \longmap S\times_k T,
	\end{equation}
where $W_S:=W_{S,\ptst}$ is the $E$-typical Witt vector functor over $S$.
(See~\cite{Borger:SLAG}.)\marpar{ref}
The composition
	\[
	\coprod_{\nset}T \longlabelmap{\gh{}} W_S(T) \longmap S\times_k T.
	\]
with the ghost map $\gh{}$ is the map that, on the component $n\in\nset$, is simply
	$$
	\entrymodifiers={+!!<0pt,\fontdimen22\textfont2>}
	\def\objectstyle{\displaystyle}
	\xymatrix@C=40pt{
	T \ar^-{(\pr,\Fr^{n}_T)}[r] & S\times_k T
	}
	$$
where $\pr$ denotes the structure map $T\to S$, and $\Fr^n_T$ is the Frobenius
map defined on functions
by $x\mapsto x^{q^{\deg(n)}}$, where $\deg(n)$ denotes $\sum_\m n_\m[\sO_S/\m:k]$,
the degree of the effective divisor corresponding to $n$.

In particular, if $T=S$, then the image of this map is the union in $S\times_k S$ of the graphs
of all powers of the Frobenius map on $S$. These components are not disjoint. For instance, let
$x$ and $y$ be two distinct closed points of $S$ with the same residue field. Then the the
components of $W_S(S)$ of indices $x$ and $y$ are distinct but have the same image in $S\times_k
S$. So~(\ref{eq:frob-component-map}) is not a monomorphism. Therefore $\fftm_!$ is not faithful,
and hence $f$ is not an isomorphism of toposes. One can also show
that~(\ref{eq:frob-component-map}) is not an epimorphism.

But the image of~(\ref{eq:can-of-worms}), or equivalently~(\ref{eq:frob-component-map}),
does see much of the geometry of $S\times_k T$.
Another way of expressing the point of this paper is that it is reasonable
to think of it as being almost an isomorphism.  For example, we have the following result.

\begin{proposition}\label{pro:W-is-dense-in-product}
	The map
		$$
			W_S(T) \longmap S\times_k T
		$$
	of~(\ref{eq:can-of-worms}) has Zariski dense image.
\end{proposition}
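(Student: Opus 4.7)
The plan is to reduce the density question to a concrete geometric statement about unions of Frobenius-twisted graphs, and then apply a noetherian-dimension argument. Recall from the construction that the composite $\coprod_{n\in\nset} T \longlabelmap{\gh{}} W_S(T) \longmap S\times_k T$ sends the $n$-th copy of $T$ via $(\pr,\Fr^n_T)\: T\to S\times_k T$. Since the image of this composite is contained in the image of $W_S(T)\to S\times_k T$, it suffices to show that $\bigcup_{n\in\nset} Y_n$ is Zariski dense in $S\times_k T$, where $Y_n$ denotes the scheme-theoretic image of $(\pr,\Fr^n_T)$.

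I would first reduce to the case where $T$ is an integral scheme of finite type over $S$. By flatness over the curve $S$, such a $T$ dominates $S$ and has dimension $d\geq 1$, so $S\times_k T$ is a noetherian integral scheme of dimension $d+1$. Each $Y_n$ is a closed irreducible subscheme of dimension exactly $d$: the bound $\dim Y_n\leq d$ is clear from the surjection $T\twoheadrightarrow Y_n$, and the reverse inequality follows from the observation that the composition of $T\to Y_n$ with the restriction $\pi_2|_{Y_n}\: Y_n\to T$ of the second projection recovers $\Fr^n_T\: T\to T$, which is surjective on $\bar k$-points.

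Next I would verify that the $Y_n$ are pairwise distinct. Working affinely with $S=\Spec A$, $T=\Spec B$, and $\pi\:A\hookrightarrow B$, the coordinate ring of $Y_n$ is naturally identified, via $\Phi_n(a\otimes b)=\pi(a)b^{q^{\deg n}}$, with the subring $\pi(A)[B^{q^{\deg n}}]$ of $B$. A standard function-field computation then shows that the generic degree of $\pi_2|_{Y_n}$ depends nontrivially on $\deg n$ (using $\dim T\geq 1$, so that $[K(T)\:K(T)^{q^{\deg n}}]$ grows strictly with $\deg n$), which implies that the $Y_n$ are distinct as closed subschemes of $S\times_k T$.

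Finally, the noetherian-dimension argument: suppose $Z\subsetneq S\times_k T$ is a proper closed subscheme containing $\bigcup_n Y_n$. Since $S\times_k T$ is integral of dimension $d+1$, we have $\dim Z\leq d$, so each $Y_n\subseteq Z$ is a closed irreducible subscheme of dimension at least $\dim Z$, and must therefore be an irreducible component of $Z$. But $Z$ is noetherian and has only finitely many irreducible components, contradicting the existence of infinitely many distinct $Y_n$. The main subtlety lies in the distinctness step, which requires positive-dimensionality of $T$; the dimension-counting part of the argument in turn uses crucially that $S$ is a curve, so that a proper closed subscheme of $S\times_k T$ has codimension at least one relative to $T$.
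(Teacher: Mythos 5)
Your approach is genuinely different from the paper's. The paper reduces to the affine case and, for a single closed point $\m$ of residue cardinality $r$, shows directly that the kernel of the ring map $R\tn_k B\to B\times B\times\cdots$, $a\tn b\mapsto\ang{ab,ab^r,ab^{r^2},\dots}$, is contained in the nilradical, using the nonvanishing of a Moore determinant $\det(a_j^{r^i})$ attached to a $k$-linearly independent family $a_1,\dots,a_d$ in $R$. Yours is a geometric argument: the Frobenius-twisted graphs $Y_n$ are irreducible closed subsets of $S\times_k T$ of dimension $\dim T$, pairwise distinct, and there are infinitely many of them, so no proper closed set can contain their union. Both ideas are sound, but your write-up has several gaps that the Moore determinant argument sidesteps.

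First, the sentence ``by flatness over the curve $S$, such a $T$ dominates $S$ and has dimension $d\geq 1$'' does not follow from reducing to $T$ integral. An integral finite-type $S$-scheme need not be flat or dominant; it can map to a closed point of $S$, in which case every $\Fr^n_T$ is the identity on points and the union of the $Y_n$ is contained in a single fiber $\{s\}\times_k T$, which is nowhere dense. So you need to \emph{assume} dominance (equivalently, injectivity of the structure map $R\to B$), not derive it; note that this is exactly where the paper's proof also needs its Moore determinant to remain a non-zerodivisor after the map to $B$.

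Second, your degree formula is off: $\pi_2|_{Y_n}$ has generic degree $[K(Y_n):K(T)^{q^{\deg n}}]=[K(S)\cdot K(T)^{q^{\deg n}}:K(T)^{q^{\deg n}}]$, which divides $[K(S):K(S)^{q^{\deg n}}]=q^{\deg n}$, not the much larger $[K(T):K(T)^{q^{\deg n}}]=q^{(\dim T)\deg n}$ that you quote. The distinctness conclusion still follows when this smaller degree grows strictly with $\deg n$ (which is automatic once one restricts, as the paper does, to powers of a single Frobenius $\psi_\m$, since then $\deg n$ determines $n$), but one must check that $K(S)\not\subset K(T)^{q^{\deg n}}$ and that the degree actually increases; this requires a little care when $K(T)/K(S)$ is inseparable. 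Third, your appeal to ``$S\times_k T$ is noetherian integral of dimension $d+1$'' requires $S\times_k T$ to be irreducible, which is not automatic from $T$ integral unless $T$ is also geometrically irreducible over $k$. Each of these points is fixable, but as written the proof needs these hypotheses made explicit and the dimension and degree computations corrected.
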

\begin{proof}
	Let $\m$ be a maximal ideal of $\sO_S$ with residue cardinality $r$.
	It suffices to show the image of the composition
		$$
		\bN \times T \longmap \nset \times T \longmap W_S(T) \longmap S\times_k T
		$$
	is dense.  The map satisfies $(n,t)\mapsto \big(h(t),\Fr_{r^n}(t)\big)$.
	
	We may assume $S$ and $T$ are affine.  Indeed,
	it suffices to show density locally.  Therefore we can replace $S$ with an affine
	open subscheme $S'$ and replace $T$ with $T'=S'\times_S T$.  We can then replace $T'$
	with an affine scheme $T''$ mapping to $T'$ by an \'etale map.

	Let us then write $S=\Spec R$ and $T=\Spec B$, where $R$ is a Dedekind
	domain and $B$ is an $R$-algebra.  
	In terms of rings, the map in question is
	\begin{equation} \label{eq:internal-density-map}
		R\tn_k B \longmap B\times B\times\dots
	\end{equation}
	and is defined by $a\tn b \mapsto \ang{ab,ab^r,ab^{r^2},\dots}$.
	We need to show that any element of its kernel is nilpotent.
	
	So let $\sum_{j=1}^d a_j\tn b_j$ be an element of its kernel.  Assume without
	loss of generality that the elements $a_j$ are linearly independent over $k$.
	Then for every $m\in \bN$, we have
	$\sum_j a_jb_j^{r^m}=0$.  Applying various powers of $\Fr_{r}$, we have the system
	of equations
		$$
		\sum_{j=1}^d a_j^{r^i}b_j^{r^d} = 0
		$$
	where $i=0,\dots,d-1$.  This can be expressed as the matrix equation
	$$
	\big(a_j^{r^i}\big)_{ij}\cdot\big(b_j^{r^d}\big)_j=0.
	$$  
	Since the family $a_1,\dots,a_d$ is linearly independent, the matrix $\big(a_j^{r^i}\big)$
	has nonzero determinant.  (This is the
	Moore matrix from the theory of function fields.
	The vanishing of its determinant is
	equivalent to the linear dependence of the $a_j$.  The proof is by a degree
	argument, just as for the familiar, analogous result about Vandermonde matrices.)
	Therefore we have $b_j^{r^d}=0$ for all $j$.  It follows that the element
		$$
		\Big(\sum_{j=1}^d a_j\tn b_j\Big)^{r^d}
		$$ 
	is zero.  So every element of the kernel of~(\ref{eq:internal-density-map})
	is nilpotent.
\end{proof}

\begin{corollary}\label{cor:separated-univ-prop-for-function-fields}
	Let $X$ and $Y$ be separated reduced algebraic spaces over $k$.
	Then any map $\ffalsh\ffaus X\to Y$ factors uniquely through
	the map $\ffalsh\ffaus X\to X$.
\end{corollary}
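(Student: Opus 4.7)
The plan is to translate via adjunction, reduce to the affine case, and then exploit the $\Lambda_S$-equivariance together with the geometric connectedness of the curve $S$.

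First I would translate. By the adjunction $\ffalsh\dashv\ffaus$, morphisms $g\:\ffalsh\ffaus X\to Y$ correspond bijectively to $\Lambda_S$-equivariant $S$-morphisms $\tilde g\:S\times_k X\to S\times_k Y$, and under this correspondence the factorization $g=f\circ\epsilon_X$ through the counit $\epsilon_X$ becomes the statement $\tilde g=\id_S\times f$. Uniqueness of $f$ is immediate from the surjectivity of the projection $\pr_X\:S\times_k X\to X$. So the claim reduces to showing that every $\Lambda_S$-equivariant $S$-morphism $\tilde g\:S\times_k X\to S\times_k Y$ has the form $\id_S\times f$ for some $f\:X\to Y$.

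In the affine case $X=\Spec A$, $Y=\Spec B$, $S=\Spec R$, the morphism $\tilde g$ corresponds to an $R$-algebra map $\phi\:R\otimes_k B\to R\otimes_k A$ satisfying $\phi\circ(\id_R\otimes\Fr^\m_B)=(\id_R\otimes\Fr^\m_A)\circ\phi$ for every closed point $\m$ of $S$. For $b\in B$, write $\phi(1\otimes b)=\sum_i r_i\otimes a_i$ with the $a_i$ chosen $k$-linearly independent in $A$, and fix any one closed point $\m$. The ring-map property together with the Freshman's Dream (valid since $q_\m$ is a power of $p$) gives $\phi(1\otimes b)^{q_\m}=\sum_i r_i^{q_\m}\otimes a_i^{q_\m}$, while the equivariance gives $\phi(1\otimes b^{q_\m})=(\id_R\otimes\Fr^\m_A)\phi(1\otimes b)=\sum_i r_i\otimes a_i^{q_\m}$. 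Since $k$ is perfect and $A$ is reduced, the $k$-linear independence of $(a_i)$ implies the same for $(a_i^{q_\m})$, and then $k$-flatness of $R$ upgrades this to $R$-linear independence in $R\otimes_k A$; hence $r_i^{q_\m}=r_i$ in $R$. Because $S$ is a smooth geometrically connected curve over $k=\bF_q$, $R$ is an integral domain in which $k$ is algebraically closed, so any $r\in R$ with $r^{q_\m}=r$ is algebraic over $k$ and hence lies in $k$. Therefore each $r_i\in k$ and $\phi(1\otimes b)\in A$, yielding a $k$-algebra map $\beta\:B\to A$ with $\phi=\id_R\otimes\beta$.

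Finally I would globalize. The property ``$\tilde g=\id_S\times f$'' is local on $X$ and on $Y$, and the $f$ obtained affine-locally is unique on any open where it exists; by covering $X$ and $Y$ with compatible affine opens and applying the affine argument on each piece I would glue the local factorizations into a global $f\:X\to Y$, using the separatedness of $Y$ and the reducedness of $S\times_k X$ (inherited from $S$ and $X$ because $k$ is perfect). The main obstacle is exactly this globalization: the $\tilde g$-preimage of a product open $S\times V\subset S\times_k Y$ need not itself be a product, so one has to argue fiber-by-fiber over $X$---showing via the affine argument applied to a chart around a point $x\in X$ that the restricted morphism $S_{\kappa(x)}\to Y$ is constant---and then assemble these pointwise factorizations into a genuine morphism of algebraic spaces.
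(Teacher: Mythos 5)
Your affine computation is correct and gives a clean, elementary alternative to the Moore-determinant argument the paper uses in Proposition~\ref{pro:W-is-dense-in-product}: Freshman's dream, perfectness of $k$ together with reducedness of $A$ to preserve linear independence under $q_\m$-th powers, and algebraic closedness of $k$ in $R$ to force the coefficients into $k$. The adjunction reformulation is also right. But the globalization step, which you yourself flag as the main obstacle, is a genuine gap, and the way you propose to close it (argue fiber-by-fiber over $X$ and then ``assemble these pointwise factorizations'') is not obviously going to work: the restriction $S_{\kappa(x)}\to Y$ need not land in a single affine open of $Y$, so the affine argument does not apply directly there either; and even if it did, collating a morphism $f\colon X\to Y$ from a family of constant maps indexed by the points of $X$ is exactly the kind of thing that cannot be done ``pointwise'' for algebraic spaces.

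The paper's proof globalizes in one stroke without any affine reduction: it uses the explicit description of $\ffalsh\ffaus X$ as a quotient of $S\times_k X$ and Proposition~\ref{pro:W-is-dense-in-product}. Concretely, the map $S\times X\to Y$ induces an equivalence relation $\Gamma=(S\times X)\times_Y(S\times X)$; because $Y$ is separated this is closed in $(S\times X)\times_k(S\times X)$; it contains the relation $\Gamma'$ generated by the image of $W_S(S\times_k X)$; and by the density proposition $\Gamma'$ is Zariski-dense in $\Gamma''=(S\times X)\times_X(S\times X)=S\times S\times X$. Hence $\Gamma\supseteq\Gamma''$, and the map factors through $X$. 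This is where separatedness and reducedness actually do their work. If you want to keep your affine linear-algebra computation in place of the Moore determinant, the right move is still to run the closed-equalizer argument on $S\times S\times X$ rather than to try to glue local factorizations: show that the locus where $g_1\circ\pr_{1,3}=g_1\circ\pr_{2,3}$ is a closed subspace of $S\times S\times X$ containing a dense subset, and you are done.
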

\begin{proof}
 	Consider the composite map
		$$
		S\times X \longmap \ffalsh \ffaus X \longmap Y,
		$$
	and let $\Gamma=(S\times X)\times_Y (S\times X)$ denote the induced equivalence
	relation on $S\times X$.  Recall that $\ffalsh\ffaus X$ is defined to be the quotient
	of $S\times X$ by the equivalence relation $\Gamma'$ generated by the image of
	$W(S\times X)$ in $S\times S \times X$
\marpartd{Clean up.\\Where is reduced\\used?}
	under the map~(\ref{eq:can-of-worms}) when $T=S\times_k X$.  Therefore
	$\Gamma$ contains $\Gamma'$.  On the other hand, by~\ref{pro:W-is-dense-in-product},
	$\Gamma'$ is dense in $\Gamma''= S\times S\times X = (S\times X)\times_X(S\times X)$.
	Therefore $\Gamma$ agrees with $\Gamma''$, and so the map $S\times X\to Y$
	factors uniquely through the quotient of $S\times X$ by $\Gamma''$, which is just $X$.
\end{proof}

\begin{corollary}\label{cor:f^*-faithful-and-almost-full}
	The functor $\ffaus\:\Space_{k}\to\Space_{\bF^S_1}$ is faithful,
	and it embeds the full subcategory of separated reduced algebraic spaces 
	over $k$ fully faithfully in the category $\Space_{\bF^S_1}$.
\end{corollary}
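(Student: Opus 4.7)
The plan is to handle the two assertions separately: faithfulness of $\ffaus$ reduces to a base-change argument, while full faithfulness on the separated reduced algebraic subcategory follows formally from Corollary~\ref{cor:separated-univ-prop-for-function-fields} via the adjunction $\ffalsh\dashv\ffaus$.

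For faithfulness, note that the $\Lambda_S$-stripping functor $\Space_{\bF^S_1}\to\Space_S$ is tautologically faithful, so it is enough to show that the plain base-change functor $T\mapsto S\times_k T$ from $\Space_k$ to $\Space_S$ is faithful. Given $f,g\:X\to Y$ in $\Space_k$ that become equal after base change, I would compose with the projection $S\times_k Y\to Y$ to obtain $f\circ p=g\circ p$ in $\Space_k$, where $p\:S\times_k X\to X$ is the other projection. It thus suffices to check that $p$ is an epimorphism, and since epimorphisms are stable under pullback in a topos, the problem reduces to showing $S\to\Spec k$ is an epimorphism of \'etale sheaves. This holds because $S$ is a nonempty smooth $k$-scheme: it has a closed point whose residue field $K$ is a finite separable extension of $k$, and the resulting \'etale cover $\Spec K\to\Spec k$ factors through $S$.

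For full faithfulness on the separated reduced algebraic subcategory, I would combine the adjunction with Corollary~\ref{cor:separated-univ-prop-for-function-fields}. The adjunction $\ffalsh\dashv\ffaus$ yields a natural bijection
\[
	\Hom_{\Space_{\bF^S_1}}(\ffaus X,\ffaus Y)\;\longisomap\;\Hom_{\Space_k}(\ffalsh\ffaus X,Y),
\]
and the corollary says that every map $\ffalsh\ffaus X\to Y$ factors uniquely through the counit $\ffalsh\ffaus X\to X$. This identifies the right-hand side with $\Hom_{\Space_k}(X,Y)$, and a routine triangle-identity check confirms that the resulting composite bijection is exactly the map induced by $\ffaus$. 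The only substantive input is Corollary~\ref{cor:separated-univ-prop-for-function-fields}; everything else is formal, and I do not expect any serious obstacle beyond what has already been established.
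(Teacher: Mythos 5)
The paper states this corollary without proof, as an immediate consequence of Corollary~\ref{cor:separated-univ-prop-for-function-fields}. Your proof is correct and is essentially the intended argument: the full-faithfulness clause is exactly the formal deduction from the universal property in Corollary~\ref{cor:separated-univ-prop-for-function-fields} via the adjunction $\ffalsh\dashv\ffaus$ and a triangle identity, and the faithfulness clause reduces correctly to the epimorphism $S\to\Spec k$ of \'etale sheaves (which holds since $S$ is nonempty and smooth over $k$). One small stylistic remark: the observation that the $\Lambda_S$-stripping functor is faithful is not actually needed for your reduction --- the implication you use is simply that if the composite $\ltm^*\circ\ffaus$ (ordinary base change) is faithful, then so is $\ffaus$, which holds for any left factor of a faithful functor.
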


Some restriction on nilpotent elements is necessary. For example, suppose $S=\Spec R$. Let $A$
be a non-reduced $k$-algebra. Choose a square-zero element $a\in A$ and an element $r$ in $R$
but not in $k$. Then the element $r\tn a$ of $R\tn_k A$ does not lie in the subring $A$. On the
other hand, we have $(r\tn a)^{q_\m} = 0 = r\tn a^{q_\m}=\psi_\m(r\tn a)$ for any maximal ideal
$\m$ of $R$. Therefore the map $\ffalsh\ffaus\Spec A\to\Spec A$ is not an isomorphism, because
$r\tn a$ is a function on the affinization of $\ffalsh\ffaus\Spec A$ that does not come from
$A$.

\subsection{} {\em Analogy.}
It is rare for the pull-back functor for a map of spaces to be fully faithful.  
So let us consider a similar, but more familiar situation where this happens.
Let $S$ be a complex algebraic space
and let $\Gamma$ be an equivalence relation on $S$ which is Zariski dense in $S\times S$.
For instance, $\Gamma$ could be given by the action of a discrete group with a dense orbit 
or, if $S$ is connected, by the formal neighborhood of the diagonal.  Then algebraic 
spaces (perhaps under some mild conditions) form a full subcategory of the category of
$\Gamma$-equivariant spaces over $S$.  The condition for a $\Gamma$-equivariant
algebraic space $X$ over $S$ to descend to the point is then a property on $X$, rather
than a structure.  We might then say the $\Gamma$-action is uniform, or constant.

Therefore, following the previous corollary, it is natural to interpret a descent 
datum on a reduced algebraic space $X$ over $S$ to the point $\Spec k$ as being a 
descent datum to $\bF^S_1$ with a similar algebraic uniformity property.
So, objects of $\bF^S_1$ are generalized---but not weakened---versions of separated
reduced algebraic spaces over the point $\Spec k$.  Of course
this makes essential use of equal characteristic.  The 
corresponding interpretation of $\Lambda$-spaces in the usual sense, over $\bZ$, would 
be that while it is possible to say what it means to descend
an algebraic space to $\bF_1$---that is, to give it a $\Lambda$-action---we
do not know if there is a uniformity property, which
is what we would need to create a true arithmetic analogue of the base point $\Spec k$.

(Buium has come to similar ideas independently. He proposed in conversation that
it might be reasonable to consider a $\Lambda$-structure on a scheme as being an
isotrivialization relative to $\bF_1$.)

\subsection{} {\em Drinfeld modules.}
Using the theory of Drinfeld modules, we can give
examples of objects of $\Space_{\bF^S_1}$ that do not descend to $\Space_{k}$.

Let $C$ be a connected smooth projective curve over $\bF_p$, let $\infty\in C$ be a closed
point, and let $A=\Gamma(C-\{\infty\},\sO_C)$.  Let $S$ be a smooth $\bF_p$-curve 
over which there is a Drinfeld $A$-module
	$$
	\varphi\:A\to \End_S(\Ga)
	$$ 
of rank $1$ and of generic characteristic.  
(See Laumon~\cite{Laumon:Drinfeld-book-I} section (1.2), say.)

Then for any closed point $s$ of $S$,
the fiber of $\varphi$ over $s$ gives a Drinfeld
$R$-module $\varphi_{s}$ over $s$ of rank $1$.  Because of the assumption 
that $\varphi$ has generic characteristic,
the characteristic of $\varphi_s$ is $s$.  A basic result of Drinfeld's
(\cite{Laumon:Drinfeld-book-I} (2.2.2)(ii)) 
then implies that there is a unique element $\Pi_s\in A$ 
such that $\varphi_s(\Pi_s)$ is the $q_s$-th power Frobenius endomorphism of $\Ga$ over $s$,
where $q_s$ denotes the residue cardinality of $s$.
If we set $\psi_s=\varphi(\Pi_s)$, then the various $\psi_s$ are commuting
endomorphisms of $\Ga$ over $S$, each agreeing with the $q_s$-power Frobenius map
on the fiber over $s$.  This gives a $\Lambda_S$-structure on $\Ga$ (which also
respects the group structure).

For example, the Carlitz module is defined when $A=\bF_p[t]$ and $S=\Spec A$ by 
$\rho(t)=t+\tau$, where $\tau$ is the Frobenius map of $\Ga$.
Then for each maximal ideal $\m$ of $k[t]$, the operator $\psi_\m$ is
$\rho(f(t))$, where $f(t)$ denotes the monic generator of $\m$. 

Observe that none of these ``Drinfeld $\Lambda_S$-structures'' on $\bA^1_S$ 
descends from $\Space_{\bF^S_1}$ to $\Space_k$.
Indeed, for every object in the image of $\ffaus$,
the operators $\psi_s$ act as zero on the conormal sheaf of the identity
section $S\subset\Ga$.  But $\varphi$ was assumed to have generic
characteristic.  Therefore every $\Pi_s$ acts faithfully on the conormal sheaf,
and hence so does every $\psi_s$.

Another important use of the construction $S\times X$ is in the study of shtukas, 
also due to \marpartd{delete?}
Drinfeld.  Indeed, it is possible to mimic this using     
$W_S(X)$ instead of $S\times X$.  And this concept can be translated to number fields.
This paper is not, however, 
the place to discuss this in any detail.

\subsection{} \emph{Complex multiplication by number fields.}
Let $R$ be a Dedekind domain whose field of fractions is a number field.
Assume that there is an abelian scheme $X$ over $R$ of dimension $d$
having the property that $\bQ\tn\End_R(X)$ contains a field $F$ of degree $2d$ over $\bQ$.

\hide{
PROBABLY $F$ HAS TO EQUAL $\bQ\tn\End_R(X)$. \marpartd{work} 
}

As above, $X$ has a natural $\Lambda_R$-structure.  For each maximal ideal $\m$ of $R$,
there is a unique element $\pi_\m\in F\cap\End_R(X)$ such that $\pi_\m$ induces the 
$q_\m$-th power Frobenius map on the fiber of $X$ over $\m$.
(See Serre--Tate~\cite{Serre-Tate}.)  Because each $\pi_\m$
lies in $F$, they all commute.  Therefore putting $\psi_\m=\pi_\m$ is 
a $\Lambda_R$-structure on $X$.

Observe that we can modify $X$ to make $\Lambda_R$-varieties that are not 
CM varieties in the usual sense.  For instance, let $G$ is a finite subgroup of
$\Aut_R(X)$.  Because every automorphism commutes with the complex multiplications,
\marpartd{true?}
$G$ acts $\Lambda_R$-equivariantly on $X$.  Therefore the quotient $X/G$ is
also a $\Lambda_R$-space.  (Because $G$ is finite, the quotient is an algebraic
space, by Artin's theorem.) \marpartd{need ref}
For instance, if $X$ is an elliptic curve
and $G=\Aut(X)$, then $X/G$ is a projective line.  

In particular, it seems likely that
explicit class field theory for imaginary quadratic fields could be expressed
in terms of $\Lambda_R$-structures on $\bP^1$, just like in the case
of $\bQ$ and function fields.

Suppose instead that $R$ is a Dedekind domain whose field of fractions is a real quadratic number
field. In light of Ritt's work~\cite{Ritt:Permutable}\cite{Ritt:Permutable-errata}, it seems
unlikely that there are $\Lambda_R$-actions on $\bP^1$ which do not come from
$\Lambda_{\bZ}$-actions. It might, however, be possible to find such $\Lambda_R$-actions on
surfaces or higher-dimensional varieties, and any example would without a doubt lead to another
example of an explicit class field theory. Of course, it would only be interesting if it
could see more than the maximal cyclotomic extension of $R$. On the other hand,
it would also be interesting to prove that no such examples exist.  

Here are some precise questions.
Is there an algebraic $\Lambda_R$-space $X$ of finite type over $R$ with the property that
for any abelian \'etale $R$-algebra $R'$, the 
$\Lambda_R$-space $\Spec R'\times_{\Spec R} X$
is not isomorphic as a $\Lambda_R$-space to one of the form
$\Spec R'\times_{\Spec \bZ} Y$, for
any algebraic $\Lambda_\bZ$-space $Y$?  (\emph{Abelian} here
means that $\bQ\tn_\bZ R'$ is a product of abelian extensions of $\bQ\tn_\bZ R$.)
Are there any algebraic $\Lambda_{R}$-spaces $X$ of finite type over $R$ whose
generic fiber is geometrically connected and which do not descend to $\Lambda_{\bZ}$-spaces?
I do not even know if many particular varieties can be ruled out:
are there any $\Lambda_R$-structures on $X=\bP^2_{R}$ with this property?

\subsection{} {\em Local number fields.}
\label{subsec:local-absolute-point}
Let $S=\Spec\bZ$, let $p$ be a prime number, and let $\ptst=\{p\}$. Let us write
$\Lambda_{S,\ptst}=\Lambda_p$. The corresponding Witt functor $\wnus{n}=\wnus{S,\ptst,n}$
is, up to re-indexing, the $p$-typical Witt vector functor defined by
Witt~\cite{Witt:Vectors} in 1936. \marpar{1937?}
Of course, $\Lambda_p$-rings and $p$-typical Witt
vectors are now ubiquitous in work on $p$-adic cohomology. (See~\cite{deJong:ICM} or 
\cite{Illusie:dRW-1016}, say.)

Now let $A$ be a complete discrete valuation ring with perfect residue field $k$ of
characteristic $p$. If $A$ is of equal characteristic, then the map $A\to k$ has a unique
section. Now suppose $A$ has mixed characteristic. Of course $A\to k$ cannot have a
section defined over $\bZ$, but remarkably, it does have a unique section defined over
$\bF_1^{S,\ptst}$, the $p$-typical field with one element, and hence over $\bF_1$. By
definition, this means that the map $W(A)\to W(k)$ of $\Lambda_p$-rings has a unique
section. Indeed, because $k$ is perfect, there is a unique ring map $W(k)\to A$ compatible
with the projections to $k$. (See~\cite{Serre:CL}, say.) By adjointness, this then lifts
to a unique map $W(k)\to W(A)$ of $\Lambda_p$-rings, which is easily seen to be a section
of the map in question. \marpar{Lubin--Tate\\groups?}


\bibliography{references}
\bibliographystyle{plain}

\end{document}